\theoremstyle{plain}
\newtheorem{theorem}{Theorem}[section]
\newtheorem{proposition}[theorem]{Proposition}
\newtheorem{corollary}[theorem]{Corollary}
\newtheorem{lemma}[theorem]{Lemma}
\newtheorem{example}[theorem]{Example}
\newtheorem{remark}[theorem]{Remark}
\begin{document}

\title{
	Multiplier ideal sheaves and the K\"ahler-Ricci flow on 
	toric Fano manifolds with large symmetry
}

\author{Yuji Sano
\thanks{
		Department of Mathematics, 
		Kyushu University, 
		6-10-1, Hakozaki, Higashiku, Fukuoka-city, 
		Fukuoka 812-8581 Japan
\endgraf
{\it E-mail address\/}:
	sano@math.kyushu-u.ac.jp
\endgraf
{\it
2000 Mathematics Subject Classification.\/
}
Primary 32Q20; Secondary 53C25.
\endgraf
{\it
	Key words and phrases.\/
}
K\"ahler-Einstein metrics, multiplier ideal sheaves, the K\"ahler-Ricci flow,
complex singularity exponent
}}

\date{December 10, 2008}

\maketitle

\begin{abstract} 
The purpose of this paper is to calculate the support of the multiplier ideal sheaves derived from the K\"ahler-Ricci flow on certain toric Fano manifolds with large symmetry.
The early idea of this paper has already been in Appendix of \cite{futaki-sano0711}.
\end{abstract}

\section{Introduction}
In \cite{futaki-sano0711}, Futaki and the author investigated the relationship between the multiplier ideal subvariety derived from the continuity method on toric Fano manifolds and Futaki invariant, and calculated the multiplier ideal subvariety on a simple example.
On the other hand, the relationship between the multiplier ideal sheaves and the K\"ahler-Ricci flow has recently been studied.
The first work on this topic is given by Phong-Sesum-Sturm \cite{pss0611}.
They give a sufficient and necessary condition for the convergence of the K\"ahler-Ricci flow in the terms of the multiplier ideal sheaves.
After \cite{pss0611} Rubinstein \cite{rubinstein0708} proves that the K\"ahler-Ricci flow will induce a multiplier ideal sheaf satisfying the same properties as Nadel's multiplier ideal sheaves derived from the continuity method.
The purpose of this paper is to calculate the multiplier ideal subvarieties from the K\"ahler-Ricci flow  in the sense of \cite{rubinstein0708}  on certain toric Fano manifolds with large symmetry.
Our method owes largely to the result about the convergence of the K\"ahler-Ricci flow proved by Tian-Zhu \cite{tian-zhu06}. 
More precisely, they proved that if $X$ admits a K\"ahler-Ricci soliton then the K\"ahler-Ricci flow will converge to it in the sense of Cheeger-Gromov, so we shall calculate the multiplier ideal subvarieties from the data of K\"ahler-Ricci solitons in the case of toric Fano manifolds with large symmetry.
The early idea of this paper has already been in Appendix of \cite{futaki-sano0711}.

First of all, let us recall about the K\"ahler-Ricci flow on Fano manifolds.
Let $(X, \omega)$ be a Fano manifold with a K\"ahler form $\omega$ representing $c_1(X)$.
The normalized K\"ahler-Ricci flow on $X$ is defined by
\begin{equation}\label{eq:ricci_flow}
	\frac{d}{dt}\omega_t=-\mbox{Ric}(\omega_t)+\omega_t
\end{equation}
where $t\in \mathbb{R}_{\ge 0}$, $\mbox{Ric}(\omega_t)$ is the Ricci form of $\omega_t$ and $\omega_0=\omega\in c_1(X)$.
Since 
the flow (\ref{eq:ricci_flow}) preserves the K\"ahler class,
we can consider the corresponding equation to (\ref{eq:ricci_flow}) with respect to K\"ahler potentials
\begin{eqnarray}
	\label{eq:ricci_flow_potential}
		\left\{\begin{array}{l}
			\frac{\partial\varphi_t}{\partial t}
			=
			\log \frac{\det (g_{i\bar{j}}+\varphi_{i\bar{j}})}{\det (g_{i\bar{j}})}+\varphi_t -h_0,
		\\
			\varphi_0 \equiv c_0			
		\end{array}\right.
\end{eqnarray}
where $\omega_t=\omega_0+\frac{\sqrt{-1}}{2\pi}\partial\bar{\partial}\varphi_t$, $c_0$ is a constant and $h_0$ is a real-valued function determined by
\begin{equation}\label{eq:ricci_discrepancy}
	\mbox{Ric}(\omega_0)-\omega_0=\frac{\sqrt{-1}}{2\pi}\partial\bar{\partial}h_0,
	\,\,\,
	\int_Xe^{h_0}\omega_0^n=\int_X\omega_0^n.
\end{equation}
The existence of the solution of (\ref{eq:ricci_flow_potential}) for all $t>0$ is proved by Cao \cite{cao85} by following Yau's argument in \cite{yau78}.
If the K\"ahler-Ricci flow converges in $C^\infty$-sense, the limit is a K\"ahler-Einstein metric.
However, since there are some obstructions for the existence of K\"ahler-Einstein metrics on Fano manifolds (\cite{matsushima57}, \cite{futaki83}, \cite{tian97}), the K\"ahler-Ricci flow does not necessarily converge on Fano manifolds.
On the other hand, it has been conjectured that the existence of canonical K\"ahler metrics including K\"ahler-Einstein metrics would be equivalent to certain stability of manifolds in the sense of Geometric Invariant Theory (cf. \cite{tian97}, \cite{donaldson0200}). 
This conjecture is an analogue of the Hitchin-Kobayashi correspondence between holomorphic Hermitian-Einstein vector bundles and slope polystable vector bundles.
So we expect that the convergence condition of the K\"ahler-Ricci flow would be described in terms of GIT.
To be more concrete, if $X$ does not admit K\"ahler-Einstein metrics, we expect that the K\"ahler-Ricci flow would induce an obstruction to the existence of K\"ahler-Einstein metrics corresponding to the destabilizing subsheaves in the Hitchin-Kobayashi correspondence for vector bundles. 
In the case of K\"ahler-Einstein metrics on Fano manifolds, a candidate for such obstruction sheaves is the so-called ``multiplier ideal sheaf" introduced by Nadel in \cite{nadel90}. 
Nadel proved that if $X$ does not admit K\"ahler-Einstein metrics, then the failure of the closedness condition for the continuity method induces a multiplier ideal sheaf.
(This fact can be extended in the cases of other canonical K\"ahler metrics such as K\"ahler-Ricci solitons \cite{futaki-sano0711} and K\"ahler-Einstein metrics in the sense of Mabuchi \cite{sano08}.)
The analogous result for the K\"ahler-Ricci flow was proved recently by Rubinstein \cite{rubinstein0708}. 
To explain the result of \cite{rubinstein0708}, first let us recall the definition of multiplier  ideal sheaves. In this paper, we adopt the formulation introduced by Demailly-Koll\'ar in \cite{demailly-kollar01}. 
Let $\psi$ be an almost plurisubharmonic function on $X$, i.e., $\psi$ is written locally as a sum of a plurisubharmonic function and a smooth function.
For $\psi$, we define a multiplier ideal sheaf $\mathcal{I}(\psi)\subset \mathcal{O}_X$ as follows; 
for every open subset $U\subset X$, the space $\Gamma(U, \mathcal{I}(\psi))$ of local sections of $\mathcal{I}(\psi)$ over $U$ is given by
\[
	\Gamma(U, \mathcal{I}(\psi))=
	\{
		f\in \mathcal{O}_X(U)
		\mid
		\int_U |f|^2 e^{-\psi}d\nu <\infty
	\},
\]
where $f$ is a holomorphic function on $U$ and $d\nu$ is a fixed volume form on $X$. 
Note that $\mathcal{I}(\psi)$ is a coherent ideal sheaf (cf.\cite{demailly-kollar01}) and invariant up to an additive constant.  
Multiplier ideal sheaves describe the singularities of almost plurisubharmonic functions. 
The result of  \cite{rubinstein0708} is as follows. 
\begin{theorem}[\cite{rubinstein0708}]\label{thm:PSS_Rubinstein}
Let $(X, \omega)$ be an $n$-dimensional Fano manifold with a K\"ahler form $\omega$ in $c_1(X)$, and $G\subset \mbox{Aut}(X)$ be a compact subgroup of the group $\mbox{Aut}(X)$ of holomorphic automorphisms of $X$. Let $\gamma \in (n/(n+1), 1)$. Suppose that $X$ does not admit K\"ahler-Einstein metrics.
Then there is an initial condition $c_0$ in (\ref{eq:ricci_flow_potential}) and a sequence $\{\varphi_{t_j}\}_{j\ge 0}$ such that $\varphi_{t_j}-\sup \varphi_{t_j}$ converges to an almost plurisubharmonic function $\varphi_{\infty}$ in $L^1$-topology and the associated multiplier ideal sheaf $\mathcal{I}(\gamma \varphi_{\infty})$ is $G^\mathbb{C}$-invariant and proper, i.e., $\mathcal{I}(\gamma \varphi_{\infty})$ equals neither to $0$ nor $\mathcal{O}_X$, where $G^\mathbb{C}$ is the complexification of $G$.
\end{theorem}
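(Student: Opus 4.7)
My plan is to mimic Nadel's construction \cite{nadel90} for the continuity method, with the continuity parameter replaced by the time $t\to\infty$ along the K\"ahler-Ricci flow (\ref{eq:ricci_flow_potential}), in the spirit of Phong-Sesum-Sturm \cite{pss0611}. Start by choosing the initial K\"ahler form $\omega$ and the constant $c_0$ to be $G$-invariant; uniqueness of solutions of (\ref{eq:ricci_flow_potential}) then forces $\varphi_t$ and its sup-normalization $\tilde\varphi_t := \varphi_t - \sup_X\varphi_t$ to be $G$-invariant for every $t\ge 0$. The family $\{\tilde\varphi_t\}$ lies inside the compact set of $\omega$-plurisubharmonic functions with supremum $0$, so by the standard $L^1$-compactness for quasi-psh functions some subsequence $\tilde\varphi_{t_j}\to\varphi_\infty$ in $L^1(X)$, with $\varphi_\infty$ again $G$-invariant and $\omega$-psh. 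Nontriviality $\mathcal{I}(\gamma\varphi_\infty)\ne 0$ is essentially automatic from the general theory of multiplier ideal sheaves: $\varphi_\infty$ is locally integrable and $L^2$-estimates place sufficiently vanishing germs in each stalk.

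The core of the argument, and the main obstacle, is properness $\mathcal{I}(\gamma\varphi_\infty)\ne\mathcal{O}_X$. This is where both the non-existence of a K\"ahler-Einstein metric and the lower bound $\gamma>n/(n+1)$ are essential. I would proceed by contraposition: assume $\mathcal{I}(\gamma\varphi_\infty)=\mathcal{O}_X$, so $e^{-\gamma\varphi_\infty}\in L^1(X)$. The key analytic input is a uniform integrability estimate along the flow of the form
\[
\int_X e^{-\gamma\tilde\varphi_t}\omega^n \le C,
\]
derived from Perelman's uniform estimates for the Ricci potential combined with the $L^1$-convergence $\tilde\varphi_{t_j}\to\varphi_\infty$ and the Demailly-Koll\'ar semicontinuity of complex singularity exponents \cite{demailly-kollar01}. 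Because $\gamma > n/(n+1)$, this places us strictly above the critical Skoda threshold, and a Ko\l odziej-type $L^\infty$-estimate for the complex Monge-Amp\`ere equation (\ref{eq:ricci_flow_potential}) upgrades the integrability to a uniform $C^0$-bound on $\tilde\varphi_t$. Feeding this back into Perelman's higher-order estimates gives uniform $C^\infty$-control, so a subsequence converges smoothly to a K\"ahler form $\omega_\infty \in c_1(X)$; the limit of (\ref{eq:ricci_flow_potential}) then forces $\mathrm{Ric}(\omega_\infty) = \omega_\infty$, contradicting the hypothesis that $X$ admits no K\"ahler-Einstein metric.

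Finally, the $G^{\mathbb{C}}$-invariance of $\mathcal{I}(\gamma\varphi_\infty)$ is formal: the stabilizer of a coherent ideal sheaf inside $\mathrm{Aut}(X)$ is a closed complex subgroup, and since $\varphi_\infty$ is $G$-invariant this stabilizer contains $G$, hence contains its complexification $G^{\mathbb{C}}$. The step I expect to be genuinely hard is the uniform integrability $\int_X e^{-\gamma\tilde\varphi_t}\omega^n \le C$ along the flow under the hypothesis $\mathcal{I}(\gamma\varphi_\infty) = \mathcal{O}_X$: for Nadel's continuity method this bound is almost tautological, but for the K\"ahler-Ricci flow it requires combining Perelman's entropy/scalar-curvature bounds with the semicontinuity of log canonical thresholds, and it is precisely here that the specific analytic features of (\ref{eq:ricci_flow_potential}) enter the argument.
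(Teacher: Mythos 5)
The paper does not prove Theorem \ref{thm:PSS_Rubinstein}; it is quoted verbatim from Rubinstein \cite{rubinstein0708}, and the only hint of the argument given in the text is the remark that the theorem ``is obtained by effectively proving that if $\alpha_G(X)>\frac{n}{n+1}$ then the K\"ahler-Ricci flow converges.'' Your contrapositive outline captures exactly that logical structure, and the three ingredients you single out (Perelman's $C^0$-bound on the Ricci potential, Demailly--Koll\'ar semicontinuity, Ko\l odziej-type $L^\infty$-estimates followed by parabolic bootstrap) are indeed the operative tools. So at the level of strategy the proposal is consistent with the intended proof.

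There is, however, a real gap in how you pass from the contradiction hypothesis to uniform integrability along the flow. You assume $\mathcal{I}(\gamma\varphi_\infty)=\mathcal{O}_X$ for \emph{one} $L^1$-subsequential limit $\varphi_\infty$ and then ask for $\int_X e^{-\gamma\tilde\varphi_t}\omega^n\le C$ for \emph{all} $t$. Semicontinuity only yields the bound along that particular subsequence $t_j$, which is not enough to run the convergence machinery on the flow itself; the flow could oscillate between a ``good'' limit and a singular one. The correct contrapositive is: assume that \emph{every} subsequential limit $\psi$ satisfies $\mathcal{I}(\gamma\psi)=\mathcal{O}_X$. Then, since $\{\tilde\varphi_t\}$ lies in the $L^1$-compact set of sup-normalized $\omega$-psh functions, semicontinuity together with a finite-subcover argument gives the uniform bound for all large $t$, and only then can one invoke the $C^0$- and higher-order estimates. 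Without making that ``for all subsequential limits'' hypothesis explicit, the step from $L^1(X)$-integrability of $e^{-\gamma\varphi_\infty}$ to smooth convergence of the full flow does not go through. A secondary inaccuracy: the exponent $n/(n+1)$ is Tian's $\alpha$-invariant threshold (a global Moser--Trudinger / $\partial\bar\partial$-estimate phenomenon), not a Skoda-type local integrability threshold; invoking ``the critical Skoda threshold'' misattributes where the constant comes from, and in particular $\gamma>n/(n+1)$ alone does \emph{not} by itself upgrade the $L^1$-bound on $e^{-\gamma\tilde\varphi_t}$ to a $C^0$-bound on $\tilde\varphi_t$ --- one needs to combine it with the structure of the parabolic Monge--Amp\`ere equation and Perelman's bound on $\dot\varphi_t$.
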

Remark that in \cite{rubinstein0708} the multiplier ideal sheaf is constructed from the sequence of $\{\varphi_t-\int_X\varphi_t\omega^n\}_t$ instead of $\{\varphi_{t}-\sup \varphi_{t}\}_t$ but there is no difference between them due to a standard argument by the Green function, more precisely, there is a constant $C$ such that
$
	\sup \varphi_t -C \le \int_X\varphi_t \omega^n \le \sup\varphi_t.
$
In order to get the limit in $L^1$-topology, we need to consider the family of the \textit{sifted} K\"ahler potentials $\{\varphi_t-\int_X \varphi_t\omega_0^n\}_t$ (equivalently $\{\varphi_t-\sup \varphi_t\}_t$).

Phong-Sesum-Sturm \cite{pss0611} (see also \cite{phong-sturm0801}) prove that if the K\"ahler-Ricci flow does not converge, then the \textit{non-shifted} solution $\{\varphi_t\}_t$ of (\ref{eq:ricci_flow_potential}) with respect to an appropriate initial condition will induce another proper multiplier ideal sheaf $\mathcal{J}^\gamma$ for $\gamma >1$, which is defined as follows; 
for every open subset $U\subset X$, the space $\Gamma(U, \mathcal{J}^\gamma)$ of local sections of $\mathcal{J}^\gamma$ over $U$ is given by
\[
	\Gamma(U, \mathcal{J}^\gamma):=
	\{
		f\in\mathcal{O}_X(U) \mid \sup_{t\ge 0}\int_U |f|^2e^{-\gamma\varphi_t}
		d\nu<\infty
	\}.	
\]
Furthermore they give a necessary and sufficient condition for the convergence of the K\"ahler-Ricci flow in terms of $\mathcal{J}^\gamma$, more precisely, the K\"ahler-Ricci flow converges if and only if there exists $\gamma>1$ such that $\mathcal{J}^\gamma$ admit the global section $1$.

The difference between the Nadel's type multiplier ideal sheaves $\mathcal{I}(\gamma\varphi_\infty)$ in Theorem \ref{thm:PSS_Rubinstein} and $\mathcal{J}^\gamma$ in \cite{phong-sturm0801} appears in the following vanishing theorem which is one of the important properties of the multiplier ideal sheaves;
\begin{theorem}[Nadel's vanishing theorem, \cite{nadel90, demailly-kollar01}]\label{thm:vanishing_thm}
Let $(X, \omega)$ be a compact K\"ahler manifold and $L$ be a holomorphic line bundle over $X$ with a singular Hermitian metric $h=e^{-\psi}h_0$, where $h_0$ is a smooth Hermitian metric and $\psi$ is an almost plurisubharmonic function. Suppose that the curvature form $\Theta(h)=-\frac{\sqrt{-1}}{2\pi}\partial\bar{\partial}\log h$ is positive definite in the current sense, that is to say, $\Theta(h)\ge \epsilon \omega$ for some $\epsilon >0$. Then we have
\begin{equation}\label{eq:nadel_vanishing}
	H^q(X, K_X\otimes L \otimes \mathcal{I}(\psi))=0, \,\,\, q>0,
\end{equation}
where $K_X$ is the canonical bundle.
\end{theorem}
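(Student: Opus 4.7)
The plan is to prove the vanishing by the $L^2$-methods of H\"ormander, Andreotti--Vesentini and Demailly: represent $H^q(X,K_X\otimes L\otimes \mathcal{I}(\psi))$ by an $L^2$-Dolbeault complex weighted by the singular metric $h=e^{-\psi}h_0$, and derive exactness in positive degree from the Bochner--Kodaira--Nakano identity together with the positivity hypothesis $\Theta(h)\ge \epsilon\omega$. The argument proceeds in three stages: building a fine resolution of $K_X\otimes L\otimes \mathcal{I}(\psi)$, an $L^2$-solvability statement for $\bar\partial$, and an approximation step to handle the singularities of $\psi$.

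For the first stage I would introduce, for each $q$, the sheaf $\mathcal{L}^{n,q}(L,h)$ of $L$-valued $(n,q)$-forms $u$ such that both $u$ and $\bar\partial u$ are locally square integrable with respect to $h$ and a fixed volume form. A local $\bar\partial$-Poincar\'e lemma with $L^2$-weights, itself a consequence of H\"ormander's estimate on a small strictly pseudoconvex ball, yields a resolution
\[
0\longrightarrow K_X\otimes L\otimes \mathcal{I}(\psi)\longrightarrow \mathcal{L}^{n,0}(L,h)\xrightarrow{\bar\partial}\mathcal{L}^{n,1}(L,h)\xrightarrow{\bar\partial}\cdots,
\]
the key point being that a holomorphic $L$-valued $(n,0)$-form lies in the kernel in degree $0$ iff its coefficients are locally square integrable against $e^{-\psi}$, i.e.\ iff it defines a local section of $K_X\otimes L\otimes \mathcal{I}(\psi)$. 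Each $\mathcal{L}^{n,q}(L,h)$ is a module over $\mathcal{C}^\infty_X$, hence fine, so the cohomology in question is computed as the cohomology of the global complex.

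Given a $\bar\partial$-closed $\alpha\in \Gamma(X,\mathcal{L}^{n,q}(L,h))$ with $q\ge 1$, the next step is to solve $\bar\partial\beta=\alpha$ via a Hilbert-space argument based on the Bochner--Kodaira--Nakano identity on the K\"ahler manifold $X$. On $L$-valued $(n,q)$-forms the commutator $[\sqrt{-1}\,\Theta(h),\Lambda]$ is pointwise bounded below by $q\epsilon\,\mathrm{Id}$ under the curvature assumption, yielding the H\"ormander-type estimate
\[
\int_X|\beta|_h^2\,dV\;\le\;\frac{1}{q\epsilon}\int_X|\alpha|_h^2\,dV
\]
for an appropriate minimizer $\beta$, which produces exactness of the global complex in positive degree and hence the claimed vanishing.

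The main obstacle is that $\psi$ is only almost plurisubharmonic, so the Bochner identity and the integration by parts above are \emph{a priori} only formal. The standard remedy, due to Demailly, is to approximate $\psi$ from above by a decreasing sequence $\psi_\nu$ of smooth almost plurisubharmonic functions whose curvature forms $\Theta(h_\nu)$ still satisfy a uniform lower bound $\ge \epsilon'\omega$ with $\epsilon'$ only slightly smaller than $\epsilon$. For the regularized metrics $h_\nu=e^{-\psi_\nu}h_0$ the estimate above is rigorous and produces solutions $\beta_\nu$ with a uniform $L^2$-bound; since $e^{-\psi_\nu}\nearrow e^{-\psi}$ pointwise, weak compactness in $L^2$ extracts a subsequential limit $\beta\in L^2(h)$ satisfying $\bar\partial\beta=\alpha$, which completes the argument.
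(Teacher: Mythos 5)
The paper does not prove this statement; it is quoted verbatim as a known result, with the proof deferred to \cite{nadel90, demailly-kollar01}. There is therefore no internal proof to compare against, and your proposal should be judged against the arguments in those references. That comparison is favorable: the three-stage scheme you lay out (fine $L^2$-Dolbeault resolution of $K_X\otimes L\otimes\mathcal{I}(\psi)$, Bochner--Kodaira--Nakano plus the curvature lower bound to get H\"ormander estimates in degree $q\ge 1$, and a decreasing regularization of $\psi$ to justify the formal integration by parts and pass to the limit by monotone convergence and weak $L^2$-compactness) is precisely the standard route and is the one taken in Demailly's treatment.

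One point deserves more care than your outline gives it, namely the regularization step on a compact K\"ahler manifold that is not assumed projective. What is needed is a decreasing sequence $\psi_\nu\searrow\psi$ of \emph{globally smooth} functions on $X$ with $\Theta(h_\nu)\ge\epsilon'\omega$ for some fixed $\epsilon'>0$. Local convolutions do not patch in general, and Demailly's global regularization theorem, in the form most commonly quoted, produces approximants with analytic singularities and a small loss $\lambda_\nu\omega$ in the curvature, not smooth ones; getting genuinely smooth decreasing approximants with a uniform strict lower bound requires the strict positivity $\Theta(h)\ge\epsilon\omega$ in an essential way (one absorbs the regularization error into $\epsilon$). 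You should state the precise regularization result being invoked, since this is where the K\"ahler hypothesis (rather than merely compact complex) and the strictness of the curvature bound are actually used. Apart from that, your estimate $[\sqrt{-1}\Theta(h_\nu),\Lambda]\ge q\epsilon'\,\mathrm{Id}$ on $(n,q)$-forms for a line bundle is correct, the identification of the degree-$0$ kernel with $K_X\otimes L\otimes\mathcal{I}(\psi)$ is exactly the definition of the multiplier ideal, and the uniform bound on the solutions $\beta_\nu$ follows from $h_\nu\le h$ as you say, so the closing weak-limit argument goes through.
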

Applying the above theorem to $L=K^{-1}_X$ and $\psi=\gamma \varphi_\infty$ for $\gamma \in (n/(n+1), 1)$ in Theorem \ref{thm:PSS_Rubinstein}, we find that 
\begin{equation}\label{eq:vanishing_theorem2}
	\begin{array}{l}
		H^0(V_\gamma, \mathcal{O}_{V_\gamma})=\mathbb{C}, 
	\,\,\,
		H^q(V_\gamma, \mathcal{O}_{V_\gamma})=0
	\end{array}
\end{equation}
for all $q>0$, where $V_\gamma$ is the associated subscheme of $\mathcal{I}(\gamma \varphi_\infty)$ whose structure sheaf $\mathcal{O}_{V_\gamma}=\mathcal{O}_X/\mathcal{I}(\gamma \varphi_\infty)$.
(\ref{eq:vanishing_theorem2}) gives us some geometric properties of $V_\gamma$ such as the connectedness, etc.
See \cite{nadel90, demailly-kollar01} for the other properties of $V_\gamma$. 
Remark that the multiplier ideal sheaf in \cite{phong-sturm0801} does not need to satisfy (\ref{eq:vanishing_theorem2}), because $\gamma>1$.
In this paper, we call $V_\gamma$ derived in Theorem \ref{thm:PSS_Rubinstein} the \textbf{KRF-multiplier ideal subscheme} (KRF-MIS) of 
exponent $\gamma$. 
We abbreviate the subschemes cut out by the multiplier ideal sheaves to the MIS.
Especially, for an almost plurisubharmonic function $\varphi$ we call the subscheme cut out by $\mathcal{I}(\gamma\varphi)$ the MIS of
exponent $\gamma$ (with respect to $\varphi$). 
The exponent of the MIS is closely related to the complex singularity exponent, which is
introduced by Demailly-Koll\'ar \cite{demailly-kollar01} and the definition of the complex singularity exponent will be explained in Section \ref{sec:MIS_1ps}.
Here let us remark that the complex singularity exponent is a local version of a holomorphic invariant which is called the $\alpha$-invariant defined by Tian \cite{tian87}. 
He proved that Fano manifolds admit K\"ahler-Einstein metrics when the $\alpha$-invariant is strictly greater than $n/(n+1)$ by using the continuity method.
On the other hand, the same result is observed in \cite{rubinstein0708} by using the K\"ahler-Ricci flow.
In fact, Theorem \ref{thm:PSS_Rubinstein} in \cite{rubinstein0708} is obtained by effectively proving that if $\alpha_G(X)>\frac{n}{n+1}$ then the K\"ahler-Ricci flow converges.
Remark that $\alpha_G(X)\ge 1$ if there is no multiplier ideal sheaf $\mathcal{I}(\psi)$ such that there is a positive constant $\varepsilon$ satisfying that $\mathcal{I}(\gamma\psi)$ is proper for $\gamma \in (1-\varepsilon, 1)$.
This result implies many examples of K\"ahler-Einstein manifolds and it has been studied well.
For example , see \cite{tian-yau87} for K\"ahler-Einstein Fano surfaces, \cite{batyrev-selivanova99}, \cite{song05} for toric Fano manifolds, \cite{donaldson0803}  for recent progress, \cite{heier0710}, \cite{chen-wang0809} for recent works related to the K\"ahler-Ricci flow, \cite{cheltsov-shramov-demailly0806} for the relation between the complex singularity exponent and the $\alpha$-invariant and the references therein.

The purpose of this paper is to calculate the support of the KRF-MIS on certain toric Fano manifolds with large symmetry.
Let us explain the class of toric Fano manifolds we shall consider.
Let $X$ be a toric Fano manifold with an effective action of $T_{\mathbb{C}}:=(\mathbb{C}^*)^n$, where $\dim_\mathbb{C}X=n$.
Let $T_\mathbb{R}:=(S^1)^n$ be the real torus of $T_{\mathbb{C}}$ and $\mathfrak{t}_{\mathbb{R}}$ be the associated Lie algebra.
Let $N_\mathbb{R}:=J\mathfrak{t}_\mathbb{R}\simeq \mathbb{R}^n$ where $J$ is the complex structure of $T_{\mathbb{C}}$. 
Let $M_\mathbb{R}$ be the dual space $Hom(N_\mathbb{R}, \mathbb{R})\simeq\mathbb{R}^n$ of $N_\mathbb{R}$.
For each toric manifold $X$, there is an associated convex polytope $P^*\subset M_\mathbb{R}$ which is the image of the moment map from $X$ to $M_\mathbb{R}$.
For $P^*$, we denote its dual polytope by $P\subset N_\mathbb{R}$, which is often called a Fano polytope.
The duality of $P$ and $P^*$ is defined by
\[
	P^*=\{y\in M_\mathbb{R} \mid \langle y, q^{(i)} \rangle \le 1
	\,\,\,\mbox{for all vertices } q^{(i)} \,\, \mbox{of } P\}.
\] 
Let $\mathcal{N}(T_{\mathbb{C}})$ be the normalizer of $T_{\mathbb{C}}$ in $\mbox{Aut}(X)$. Then the Weyl group $\mathcal{W}(X):=\mathcal{N}(T_{\mathbb{C}})/T_{\mathbb{C}}$ of $\mbox{Aut}(X)$ with respect to $T_{\mathbb{C}}$ equals to the finite subgroup of $\mbox{GL}(N, \mathbb{Z})$ consisting of all elements which preserve $P$ where $N\simeq \mathbb{Z}^n$ is the space of all lattice points in $N_\mathbb{R}$ (see Proposition 3.1 in \cite{batyrev-selivanova99}).
Let $N_{\mathbb{R}}^{\mathcal{W}(X)}:=\{x\in N_\mathbb{R} \mid x^g=x \,\,\,\mbox{for all } g\in \mathcal{W}(X)\}$.
Then, the class of toric Fano manifolds which we shall consider is
\[
	\mathcal{W}_1:=
	\{
		X:\,\, \mbox{toric Fano manifold with }
		\dim N_\mathbb{R}^{\mathcal{W}(X)}=1
	\}.
\]
The advantage to restrict the class of toric Fano manifolds to $\mathcal{W}_1$ is that it allows us to determine the holomorphic vector field of K\"ahler-Ricci solitons precisely only by the sign of its Futaki invariant and to calculate the KRF-MIS by using a K\"ahler-Ricci soliton,
although $\mathcal{W}_1$ might be quite limited.
Remark that Wang-Zhu \cite{wang-zhu04} proved that every toric Fano manifold has a K\"ahler-Ricci soliton.
We choose $G$ to be the maximal compact subgroup in $\mathcal{N}(T_{\mathbb{C}})$ generated by $T_{\mathbb{R}}$ and $\mathcal{W}(X)$ so that we have the short exact sequence
\[
	1\to T_\mathbb{R} \to G \to \mathcal{W}(X) \to 1.	
\]

Then our main result is as follows.
\begin{theorem}\label{thm:reduction_MIS}
Let $X$ be a toric Fano manifold in $\mathcal{W}_1$. 
Suppose that $X$ does not admit K\"ahler-Einstein metrics
and that the imaginary part $\mathfrak{Im}(v_{KRS})$ of $v_{KRS}$ generates a one-parameter subgroup of $G$. Let $\{\sigma_t:=\exp(tv_{KRS})\}$ and $\gamma \in (0,1)$. Then, the support of the KRF-MIS of
exponent $\gamma$ is equal to the support of the MIS of
exponent $\gamma$ derived from a sequence of K\"ahler potentials of $\{(\sigma_{t}^{-1})^*\omega\}$ for any $G$-invariant K\"ahler form $\omega$.
\end{theorem}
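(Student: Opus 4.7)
The plan is to exploit Tian--Zhu's theorem \cite{tian-zhu06} that, along the normalized K\"ahler--Ricci flow on a toric Fano manifold, the one-parameter subgroup $\sigma_t=\exp(tv_{KRS})$ satisfies $\sigma_t^*\omega_t\to\omega_{KRS}$ in $C^\infty$, where $\omega_{KRS}$ is the K\"ahler--Ricci soliton. Under the hypothesis $X\in\mathcal{W}_1$ and that $\mathfrak{Im}(v_{KRS})$ generates a one-parameter subgroup of $G$, one may take $\omega_{KRS}$ to be $G$-invariant and the Tian--Zhu diffeomorphisms are forced to coincide with $\exp(tv_{KRS})$. The goal is to show that the sup-normalized KRF potentials and the sup-normalized potentials of $(\sigma_t^{-1})^*\omega$ differ by a uniformly bounded function, from which coincidence of the two MIS supports follows.

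Concretely, write
\[
\omega_{KRS}=\omega+\tfrac{\sqrt{-1}}{2\pi}\partial\bar\partial\psi_{KRS},\qquad
(\sigma_t^{-1})^*\omega=\omega+\tfrac{\sqrt{-1}}{2\pi}\partial\bar\partial\widetilde\rho_t,
\]
with $\psi_{KRS}\in C^\infty(X)$ chosen $G$-invariant. Pulling back the Tian--Zhu limit by $(\sigma_t^{-1})^*$ gives $\omega_t\simeq (\sigma_t^{-1})^*\omega_{KRS}=\omega+\tfrac{\sqrt{-1}}{2\pi}\partial\bar\partial(\widetilde\rho_t+\psi_{KRS}\circ\sigma_t^{-1})$ in $C^\infty$. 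Reading this at the level of potentials, and modulo an additive constant $c_t$, yields
\[
\varphi_t=\widetilde\rho_t+\psi_{KRS}\circ\sigma_t^{-1}+c_t+o(1).
\]
Since $\psi_{KRS}$ is continuous on the compact manifold $X$, the function $\psi_{KRS}\circ\sigma_t^{-1}$ is bounded in $L^\infty$ uniformly in $t$. Subtracting the suprema from $\varphi_t$ and from $\widetilde\rho_t$ absorbs $c_t$ and the shift $\sup(\psi_{KRS}\circ\sigma_t^{-1})$, so
\[
(\varphi_t-\sup\varphi_t)-(\widetilde\rho_t-\sup\widetilde\rho_t)\in L^\infty(X)\text{ uniformly in }t.
\]

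By Theorem~\ref{thm:PSS_Rubinstein}, a subsequence of $\varphi_{t_j}-\sup\varphi_{t_j}$ converges in $L^1$ to an almost plurisubharmonic $\varphi_\infty$; by the standard compactness properties of quasi-plurisubharmonic functions, one may pass to a further subsequence so that $\widetilde\rho_{t_j}-\sup\widetilde\rho_{t_j}$ also converges in $L^1$ to an almost plurisubharmonic $\widetilde\varphi_\infty$. The uniform bound above forces $\varphi_\infty-\widetilde\varphi_\infty\in L^\infty(X)$. Since the multiplier ideal sheaf $\mathcal{I}(\gamma\psi)$ depends on $\psi$ only up to bounded perturbation (bounded weights do not affect the convergence of $\int|f|^2e^{-\gamma\psi}d\nu$), one concludes $\mathcal{I}(\gamma\varphi_\infty)=\mathcal{I}(\gamma\widetilde\varphi_\infty)$, so in particular their subscheme supports coincide.

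The main obstacle is to translate Tian--Zhu's $C^\infty$-convergence \emph{modulo the soliton-flow diffeomorphisms} into a genuine $L^\infty$-comparison of the three potentials $\varphi_t$, $\widetilde\rho_t$, and $\psi_{KRS}\circ\sigma_t^{-1}$ with carefully tracked normalizations. This in turn requires identifying the one-parameter family produced by Tian--Zhu with the specific $\exp(tv_{KRS})$ and ensuring that $\omega_{KRS}$ may be taken $G$-invariant, so that the entire comparison stays within the $G$-invariant category used in Theorem~\ref{thm:PSS_Rubinstein}; both facts are secured by the $\mathcal{W}_1$-hypothesis together with the assumption that $\mathfrak{Im}(v_{KRS})$ generates a subgroup of $G$, which pins down $v_{KRS}$ uniquely via the sign of the Futaki invariant as emphasized in the introduction.
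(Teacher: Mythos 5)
There is a genuine gap, and it is precisely the step the paper spends the most effort on. You assert that ``the Tian--Zhu diffeomorphisms are forced to coincide with $\exp(tv_{KRS})$,'' but this is not what the $\mathcal{W}_1$ hypothesis gives you. In the paper, the normalizing transformations are the maps $\rho_t$ corresponding to translation by $x_t' = s_t\beta_{KRS}$ on $N_\mathbb{R}$, where $x_t$ is the minimum point of the reduced convex potential $\bar u_t$ and $x_t'$ is the Lipschitz smoothing of that family. The $\mathcal{W}_1$ hypothesis pins down the \emph{direction} of $x_t'$ (it must lie on the fixed line $\mathbb{R}\beta_{KRS}$), but the time-parametrization $s_t$ is dictated by the flow, not by you. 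Proposition \ref{prop:convergence} gives $\tilde v_t = \beta_{KRS}\cdot\frac{ds_t}{dt} \to v_{KRS}$, hence $\frac{ds_t}{dt}\to 1$, but this does \emph{not} force $s_t - t$ to stay bounded. If $s_t - t$ is unbounded, then $(\rho_t^{-1})^*\omega$ and $(\sigma_t^{-1})^*\omega$ differ by a translation along $\beta_{KRS}$ of unbounded length, and their sup-normalized potentials are \emph{not} $L^\infty$-comparable uniformly in $t$. So your final ``bounded perturbation of the weight'' step collapses.

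Consequently, the $C^\infty$-convergence argument only gets you as far as Lemma \ref{lem:step1}: the KRF-MIS equals the MIS coming from $\{(\rho_t^{-1})^*\omega\}$, a one-parameter family along the direction $\beta_{KRS}$ with \emph{some} divergent parametrization $s_t$. Passing from $\rho_t$ to $\sigma_t$ is exactly the content of Lemma \ref{lem:step2}, and the paper proves it \emph{not} by an $L^\infty$-comparison but by deriving the explicit complex singularity exponent formula of Theorem \ref{thm:cse_face_formula_face} and observing that the formula depends only on $\beta_{KRS}$ and on $s_t\to\infty$, not on the particular monotone reparametrization. That combinatorial/asymptotic computation is what your proposal is missing. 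The remaining parts of your argument---translating $C^\infty$-convergence modulo the pull-back into a uniform $L^\infty$ bound on normalized potentials, and the invariance of $\mathcal{I}(\gamma\psi)$ under bounded perturbations of the weight---are correct and match the paper's proof of Lemma \ref{lem:step1}.
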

\begin{remark}
The author expects that the restriction to $\mathcal{W}_1$ would be just a technical assumption and it would be ruled out.
On the other hand, it is not known yet whether the restriction would imply that $X$ does not admit  K\"ahler-Einstein metrics.
This corresponds to a special case of the question in \cite{batyrev-selivanova99} which inquires whether all toric K\"ahler-Einstein Fano manifolds are symmetric or not.
Here recall that a toric Fano manifold $X$ is called symmetric if $\dim N_\mathbb{R}^{\mathcal{W}(X)}=0$.
\end{remark}
Theorem \ref{thm:reduction_MIS} says that the KRF-MIS on $X\in \mathcal{W}_1$ is reduced to the MIS derived from a one-parameter subgroup of the torus action.
In order to calculate the support of multiplier ideal subschemes on toric Fano manifolds, it is sufficient to calculate the complex singularity exponent with respect to the associated almost plurisubharmonic function for each face of the polytope $P^*\subset M_\mathbb{R}$.
Then we shall give a formula to calculate the complex singularity exponent of the MIS obtained from one-parameter subgroups of the torus action in Theorem \ref{thm:cse_face_formula_face}.
Combining Theorem \ref{thm:reduction_MIS} and Theorem {\ref{thm:cse_face_formula_face}}, we can calculate the support of the KRF-MIS concretely.
For example, we can prove
\begin{corollary}\label{cor:CP2_2pts}
Let $X$ be the blow up of $\mathbb{CP}^2$ at $p_1$ and $p_2$.
Let $E_1$ and $E_2$ be the exceptional divisors of the blow up, and $E_0$ be the proper transform of $\overline{p_1p_2}$ of the line passing through $p_1$ and $p_2$.
Then, the support of the KRF-MIS on $X$ of
exponent $\gamma$ is 
\[
	\left\{\begin{array}{cc}
		\cup_{i=0}^{2} E_i & \mbox{for } \gamma \in (\frac{1}{2}, 1),  \\
		E_0 & \mbox{for } \gamma \in (\frac{1}{3}, \frac{1}{2}).
	\end{array}\right.
\]
\end{corollary}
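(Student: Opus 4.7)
The plan is to specialize Theorem \ref{thm:reduction_MIS} to the concrete toric variety $X$ and then apply the face-by-face formula of Theorem \ref{thm:cse_face_formula_face}. First I realize $X$ as a toric variety: its fan has the five rays $v_1 = (1,0)$, $v_2 = (0,1)$, $v_3 = (-1,-1)$, $v_4 = v_1+v_2 = (1,1)$, $v_5 = v_1+v_3 = (0,-1)$, where $v_4$ and $v_5$ record the two blow-ups. Under the toric dictionary these rays correspond to the prime toric divisors, with $v_1 \leftrightarrow E_0$, $v_4 \leftrightarrow E_1$, $v_5 \leftrightarrow E_2$. The Fano polytope $P = \operatorname{conv}\{v_1,\ldots,v_5\}$ is preserved by the involution $A = \bigl(\begin{smallmatrix}1 & -1\\ 0 & -1\end{smallmatrix}\bigr) \in \mbox{GL}(2,\mathbb{Z})$, which swaps $v_4 \leftrightarrow v_5$ and $v_2 \leftrightarrow v_3$ while fixing $v_1$; its fixed subspace in $N_\mathbb{R}$ is the line $\{(x,0)\}$, so $\dim N_\mathbb{R}^{\mathcal{W}(X)}=1$ and $X\in\mathcal{W}_1$. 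Since $\mbox{Aut}(X)$ is non-reductive, $X$ admits no K\"ahler-Einstein metric, and the hypotheses of Theorem \ref{thm:reduction_MIS} are met.

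Next I invoke Theorem \ref{thm:reduction_MIS} to replace the KRF-MIS by the MIS derived from the translates $(\sigma_t^{-1})^*\omega$, $\sigma_t = \exp(tv_{KRS})$, for any $G$-invariant K\"ahler form $\omega$. Because $v_{KRS}$ lies in $N_\mathbb{R}^{\mathcal{W}(X)}$, it has the form $v_{KRS} = (a,0)$ with $a\neq 0$, and $a$ is characterised by the Wang-Zhu soliton equation, which by the $A$-symmetry collapses to the single scalar condition $\int_{P^*} y_1\, e^{-ay_1}\,dy = 0$. The dual polytope is the pentagon $P^* = \{y : y_1\le 1,\ y_2\le 1,\ y_1+y_2\le 1,\ y_2\ge -1,\ y_1+y_2\ge -1\}$, whose five facets correspond to the five toric divisors; in particular the facets $\{y_1=1\}$, $\{y_1+y_2=1\}$, $\{y_2=-1\}$ correspond to $E_0$, $E_1$, $E_2$ respectively.

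I then apply the formula of Theorem \ref{thm:cse_face_formula_face} to each facet, feeding in $v_{KRS}=(a,0)$ together with the explicit face data of $P^*$. The anticipated output is complex singularity exponent $1/3$ along $E_0$ and $1/2$ along each of $E_1$ and $E_2$, with the two remaining toric divisors giving exponent at least $1$ (and therefore contributing nothing for $\gamma<1$); the equality of the exponents along $E_1$ and $E_2$ is forced a priori by the involution $A$ that interchanges these two divisors. Since a point $x$ lies in the support of the MIS of exponent $\gamma$ precisely when $\gamma$ exceeds the complex singularity exponent at $x$, the support is $E_0\cup E_1\cup E_2$ for $\gamma \in (1/2,1)$ and $E_0$ alone for $\gamma\in(1/3,1/2)$, as claimed.

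The main obstacle is this last step: extracting the precise numerical values $1/3$ and $1/2$ from Theorem \ref{thm:cse_face_formula_face}, rather than unrelated rationals. This requires first pinning down the sign of the scalar $a$ from the Wang-Zhu equation and then carefully reading off the distance from the origin to each relevant facet of $P^*$ along the direction $(a,0)$ within the convex-analysis formula underlying Theorem \ref{thm:cse_face_formula_face}. Once these two ingredients are in hand, the remainder of the argument is essentially bookkeeping.
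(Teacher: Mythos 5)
Your framework is the same as the paper's (Example \ref{ex:blw_CP2_2pts}): realize $X$ as a toric surface, exhibit a nontrivial involution in $\mathcal{W}(X)$ fixing a one-dimensional subspace of $N_\mathbb{R}$ to place $X\in\mathcal{W}_1$, pin down the soliton direction $\beta_{KRS}$, then compute the face-by-face complex singularity exponents via Theorem \ref{thm:cse_face_formula_face}. Two remarks.

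First, you do not need the full Wang--Zhu equation $\int_{P^*}y_1e^{-ay_1}\,dy=0$ to fix the sign of $a$; the paper instead uses criterion (\ref{eq:criterion_KRS}), i.e.\ Mabuchi's identification $\mathbb{F}=-b(P^*)$ (Theorem \ref{thm:mabuchi}) plus the positivity $F(v_{KRS})>0$ from (\ref{eq:sign_KRS}), which reduces the question to the sign of a single barycenter coordinate. Both routes are consistent, but the paper's avoids the transcendental soliton equation entirely; your $\mathcal{W}(X)$-symmetry argument already reduces $b(P^*)$ to one scalar, and its sign is a finite polyhedral computation.

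Second, and more importantly, the step you defer as ``essentially bookkeeping'' is exactly where the corollary's content lives and must be carried out: for each ray $q$ of $P$ corresponding to $E_0,E_1,E_2$ you must identify the set $\{p^{\max(k)}\}$ of vertices of $P^*$ maximizing $\langle p^{(i)},-\beta_{KRS}\rangle$, evaluate the minimax pairing $\langle p^{\max(k_0)},x^{(0)}\rangle$ for that face, and substitute into $c_{\{p^{(\delta^*)}\}}=1/(1-\langle p^{\max(k_0)},x^{(0)}\rangle)$, then check via Corollary \ref{cor:star_cor} that the remaining two rays avoid $int(\widetilde{st(x(-\beta_{KRS}))})$ so the other faces have exponent $\ge 1$. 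In the paper's coordinates there is a single $p^{\max}=(-1,-1)$, and the pairings with the rays $(1,0),(0,1),(1,1)$ give $-1,-1,-2$, hence exponents $\tfrac12,\tfrac12,\tfrac13$; the same arithmetic in your basis recovers the claimed values. As written, your proposal asserts the numbers rather than deriving them, so the proof is not yet complete.
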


Finally, let us remark a relation with stability of manifolds.
As an analogue of slope stability of vector bundles, Ross-Thomas \cite{ross-thomas06} defined the slope for subschemes of a polarized manifold and proved that their slope stability is necessary for the existence of constant scalar curvature K\"ahler metrics.
From the viewpoint of Hitchin-Kobayashi correspondence, we expect that the KRF-MIS would destabilize a Fano manifold $(X, c_1(X))$ with anticanonical polarization.
Unfortunately, it is proved recently by Panov-Ross \cite{panov-ross0710} that the blow up of $\mathbb{CP}^2$ at two points is slope stable with respect to the anticanonical polarization, while it is not a K\"ahler-Einstein manifold.
On the other hand, by the formula (Corollary 5.3 in \cite{ross-thomas06}) to calculate the slope of smooth curves in a surface, we can see that $E_0$ in Corollary \ref{cor:CP2_2pts} has the worst slope.
This fact suits that $E_0$ has the worst complex singularity exponent in Corollary \ref{cor:CP2_2pts}.
In other words, our result suggests that the slope of subschemes would be related to the strength of singularity of the KRF-MIS.

The organization of this paper is as follows.
In Section \ref{sec:convergence} we shall reduce the KRF-MIS to a simpler one by following the proof of the convergence of the K\"ahler-Ricci flow by Tian-Zhu.
In Section \ref{sec:MIS_1ps} for each face of $P^*$ we shall give a formula to calculate the complex singularity exponent of the associated almost plurisubharmonic function derived from one-parameter subgroups of the torus action and complete the proof of the main theorem. Furthermore we shall give a way to determine the support of the KRF-MIS.
In Section \ref{sec:example} we shall calculate examples of toric Fano $n$-folds ($n=2,3$) contained in $\mathcal{W}_1$ by using our results.

\textit{Acknowledgement}: 
This work was supported by World Premier International Research Center Initiative (WPI Initiative), MEXT Japan while the author was a project researcher of the Institute for the Physics and Mathematics of the Universe (IPMU).
The author would like to thank Professor Akito Futaki for his valuable comments for the improvement of this paper.

\section{Convergence of the K\"ahler-Ricci flow to K\"ahler-Ricci solitons on toric Fano manifolds}
\label{sec:convergence}
Through this section and the next section, we shall prove the main theorem by using the results of Tian-Zhu \cite{tian-zhu06} and Zhu \cite{zhu0703} about the convergence of the K\"ahler-Ricci flow.
Firstly, let us recall toric Fano manifolds briefly. 
A toric variety $X$ is an algebraic variety with an effective action of $T_{\mathbb{C}}:=(\mathbb{C}^*)^n$, where $\dim_\mathbb{C}X=n$.
Let $T_\mathbb{R}:=(S^1)^n$ be the real torus in $T_{\mathbb{C}}$ and $\mathfrak{t}_{\mathbb{R}}$ be the associated Lie algebra.
Let $N_\mathbb{R}:=J\mathfrak{t}_\mathbb{R}\simeq \mathbb{R}^n$ where $J$ is the complex structure of $T_{\mathbb{C}}$. Let $M_\mathbb{R}$ be the dual space $Hom(N_\mathbb{R}, \mathbb{R})\simeq\mathbb{R}^n$ of $N_\mathbb{R}$. Denoting the group of algebraic characters of $T_{\mathbb{C}}$ by $M$, then $M_\mathbb{R}=M\otimes_\mathbb{Z} \mathbb{R}$.
It is well-known that for each smooth toric Fano manifold $X$ there is a fan $\Sigma_X$ such that
\begin{enumerate}
	\item[(a)]
		the polytope $P$ consisting of the set of the primitive elements of 
		all $1$-dimensional cones in $\Sigma_X$ is an $n$-dimensional convex polytope,
	\item[(b)]
		the origin of $N_\mathbb{R}$ is contained in the interior of $P$,
	\item[(c)]
		any face of $P$ is a simplex, and
	\item[(d)]
		the set of vertices of  any $(n-1)$-dimensional face of $P$ constitutes a basis 
		of $N\simeq\mathbb{Z}^n \subset N_\mathbb{R}$.
\end{enumerate}
The polytope $P$ is often called the Fano polytope of $X$.

Next let us recall the definition of K\"ahler-Ricci solitons. A pair $(v, \omega)$ of a holomorphic vector field and a K\"ahler form on a Fano manifold is called a K\"ahler-Ricci soliton if 
\[
	\mbox
	{Ric}(\omega)-\omega=\mathcal{L}_v\omega,
\]
where $\mathcal{L}_v$ is the Lie derivative along $v$. Obviously K\"ahler-Einstein metrics are K\"ahler-Ricci solitons with $v=0$. The existence of K\"ahler-Ricci solitons on toric Fano manifolds is proved by Wang-Zhu \cite{wang-zhu04}.
\begin{theorem}[Wang-Zhu, \cite{wang-zhu04}]\label{thm:wang-zhu}
There exists a K\"ahler-Ricci soliton, which is unique up to the identity component of the group of  holomorphic automorphisms, on a toric Fano manifolds.
\end{theorem}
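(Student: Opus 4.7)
The plan is to look for a $T_{\mathbb{R}}$-invariant soliton whose vector field $v$ lies in $\mathfrak{t}_{\mathbb{C}}$. On the open dense orbit $T_{\mathbb{C}}\subset X$, such a K\"ahler form can be written as $\omega = \frac{\sqrt{-1}}{2\pi}\partial\bar\partial(2u)$ where $u$ is a smooth strictly convex function of the real logarithmic coordinates $x_j = \log|z_j|^2$. The gradient $\nabla u$ is the moment map and its image is the polytope $P^{*}\subset M_{\mathbb{R}}$, while $\omega$ extends smoothly across the toric boundary precisely when $u$ has the correct boundary asymptotics on the faces of $P^{*}$. In these coordinates the soliton equation reduces to a real Monge-Amp\`ere equation of the schematic form
\[
\log\det(u_{ij}(x)) \;=\; -u(x) - \langle Y, \nabla u(x)\rangle + \text{affine}(x),
\]
where $Y\in \mathfrak{t}_\mathbb{R}$ encodes the imaginary part of $v$.

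First I would pin down $v$. The soliton equation forces the vanishing of a modified Futaki-type invariant $F_v:\mathfrak{t}_{\mathbb{C}}\to\mathbb{C}$, which in toric coordinates reads
\[
\int_{P^{*}} y \, e^{-\langle Y,y\rangle}\, dy = 0.
\]
This is exactly the critical point equation for the smooth functional $Y\mapsto \int_{P^{*}}e^{-\langle Y,y\rangle}\, dy$ on $N_{\mathbb{R}}$. The latter is strictly convex (its Hessian at $Y$ is the covariance matrix of the probability measure $e^{-\langle Y,y\rangle}dy/Z$), and since $0$ lies in the interior of $P^{*}$ the functional is proper; so a unique minimizing $Y$ exists. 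This gives existence and uniqueness of the soliton vector field.

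With $v$ fixed, the plan for existence is the continuity method on the path
\[
\text{Ric}(\omega_t) - \mathcal{L}_v\omega_t = t\omega_t + (1-t)\omega_0, \qquad t\in[0,1],
\]
starting from a $G$-invariant reference $\omega_0$. Openness follows by linearizing to a self-adjoint elliptic operator on the space of $T_{\mathbb{R}}$-invariant functions and inverting it via the modified Futaki-vanishing condition. Closedness reduces to a uniform $C^{0}$ estimate for the potential; once this is in hand, Yau's second-order and Calabi's third-order estimates (adapted for the $\mathcal{L}_v$ term) give full $C^{\infty}$ convergence. The hard part is this $C^{0}$ bound. The strategy is to introduce a modified Mabuchi/Ding-type functional adapted to $v$ whose critical points are solutions along the path, and to show it is proper on the space of $T_\mathbb{R}$-invariant potentials. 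Via the Legendre transform this reduces to a weighted coercivity inequality for convex functions on $P^{*}$ with weight $e^{-\langle Y,y\rangle}$; the linear escape directions that would obstruct coercivity are killed precisely by the vanishing of $F_v$ at the chosen $Y$, which is why that specific $v$ was forced on us.

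Finally, for uniqueness up to $\mathrm{Aut}^{0}(X)$, the plan is a Bando-Mabuchi type argument: the modified Mabuchi functional associated to $v$ is strictly convex along generalized geodesics connecting two $v$-solitons, and this forces two solutions to differ by an element in the centralizer of $v$ in $\mathrm{Aut}^{0}(X)$. Conjugating any other soliton vector field $v'$ into $\mathfrak{t}_{\mathbb{C}}$ by $\mathrm{Aut}^{0}(X)$ reduces the general case to the torus-invariant one, after which the uniqueness of $Y$ from the Futaki equation together with the functional's strict convexity closes the argument. I expect the coercivity of the modified Ding functional on the toric polytope — i.e. the careful use of $F_v=0$ to remove linear obstructions — to be the decisive technical step.
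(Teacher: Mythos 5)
This is a cited theorem: the paper gives no proof of it, attributing existence to Wang--Zhu \cite{wang-zhu04} and (in the clause about uniqueness up to $\mathrm{Aut}^0(X)$) uniqueness to Tian--Zhu \cite{tian-zhu02}, so there is no in-text argument for you to be compared against. Judged against the cited works, your outline captures the correct strategy. Determining the soliton vector field from the unique minimizer of the strictly convex, proper functional $Y\mapsto\int_{P^*}e^{-\langle Y,y\rangle}dy$ is exactly the preliminary step (the Hessian is the covariance of the tilted measure, and properness uses $0\in\mathrm{int}\,P^*$), and running a continuity method for the resulting real Monge--Amp\`ere equation on $N_\mathbb{R}$, with the uniform $C^0$ estimate as the crux and Yau/Calabi estimates thereafter, is the skeleton of Wang--Zhu's existence proof.

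The one place your account genuinely differs from Wang--Zhu is the $C^0$ step. They do not set up a modified Ding/Mabuchi functional and prove its properness via a Legendre-transform coercivity inequality; rather they argue directly on the Monge--Amp\`ere equation, showing that the convex combinations $w_t=tu_t+(1-t)u_0$ along the path have $\inf w_t$ bounded and their minimum points confined to a fixed compact set of $N_\mathbb{R}$, and it is precisely at this step that the weighted-barycenter condition $\int_{P^*}y\,e^{-\langle Y,y\rangle}dy=0$ is invoked to rule out escape of the minimum to infinity. Your energy-functional formulation is a later, equivalent repackaging of the same mechanism (cf.\ Donaldson, Zhou--Zhu); both roads hinge on the choice of $Y$ killing the linear obstruction, so there is no gap, only a change of framing. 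One small attribution caveat: the uniqueness-modulo-$\mathrm{Aut}^0(X)$ clause is Tian--Zhu's theorem rather than part of Wang--Zhu's existence paper, and your Bando--Mabuchi-style sketch is indeed the route Tian--Zhu take.
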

In the recent progress of the study about the Ricci flow after Perelman's works, the convergence of the K\"ahler-Ricci flow on Fano manifolds with K\"ahler-Ricci solitons is proved by Tian-Zhu \cite{tian-zhu06}.
This is a generalization of the result announced by Perelman \cite{perelman} which says that if $X$ admits a K\"ahler-Einstein metric then the K\"ahler-Ricci flow will converge to a K\"ahler-Einstein metric in the sense of Cheeger-Gromov.
Let $\mbox{Aut}_r(X)$ be the reductive part of $\mbox{Aut}(X)$ and $K$ be a maximal compact subgroup of $\mbox{Aut}_r(X)$. Note that $\mbox{Aut}_r(X)$ is the complexification of $K$. From the uniqueness of K\"ahler-Ricci solitons proved by Tian-Zhu in \cite{tian-zhu02}, we may assume that a K\"ahler-Ricci soliton $(v_{KRS},\omega_{KRS})$ is $K$-invariant and the imaginary part of $v_{KRS}$ generates a one-parameter subgroup $K_{v_{KRS}}$ of $K$.
For a holomorphic vector field $v$, let $F_{v}$ be the holomorphic invariant defined by Tian-Zhu \cite{tian-zhu02} , which is a generalization of Futaki invariant. The definition of Futaki invariant will be explained in Section \ref{sec:example}.
Then the holomorphic vector field $v_{KRS}$ satisfies that  $F_{v_{KRS}}$ vanishes on $\mbox{Aut}_r(X)$.
\begin{theorem}[Tian-Zhu, \cite{tian-zhu06}]\label{thm:tian-zhu}
Let $X$ be a Fano manifold which admits a K\"ahler-Ricci soliton $(v_{KRS}, \omega_{KRS})$ as above.
Then, any solution $\omega_t$ of the normalized K\"ahler-Ricci flow (\ref{eq:ricci_flow}) will converge to $\omega_{KRS}$ in the sense of Cheeger-Gromov if the initial K\"ahler metric is $K_{v_{KRS}}$-invariant.
\end{theorem}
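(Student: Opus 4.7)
The idea is to compare the K\"ahler-Ricci flow solution $\varphi_t$ with the self-similar K\"ahler-Ricci flow generated by the soliton $(\omega_{KRS}, v_{KRS})$, and then to observe that the two families of K\"ahler potentials differ by a sequence of functions uniformly bounded in $L^\infty(X)$---a perturbation which leaves multiplier ideal sheaves untouched. Concretely, run the flow (\ref{eq:ricci_flow_potential}) with initial datum a $G$-invariant K\"ahler form $\omega\in c_1(X)$, so that $\omega_t=\omega+\tfrac{\sqrt{-1}}{2\pi}\partial\bar\partial\varphi_t$. Since $\mathfrak{Im}(v_{KRS})$ generates a one-parameter subgroup of $G$, $\omega$ is in particular $K_{v_{KRS}}$-invariant, and Theorem \ref{thm:tian-zhu}---in the sharper form actually proved in \cite{tian-zhu06} (see also \cite{zhu0703}) for the modified K\"ahler-Ricci flow---yields
\[
\sigma_t^*\omega_t \;\longrightarrow\; \omega_{KRS} \qquad \text{in } C^\infty(X).
\]

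Write $\omega_{KRS}=\omega+\tfrac{\sqrt{-1}}{2\pi}\partial\bar\partial \Psi_{KRS}$ and $(\sigma_t^{-1})^*\omega=\omega+\tfrac{\sqrt{-1}}{2\pi}\partial\bar\partial u_t$, where $u_t$ is a K\"ahler potential of $(\sigma_t^{-1})^*\omega$. The $C^\infty$-convergence above means $\sigma_t^*\omega_t-\omega_{KRS}=\tfrac{\sqrt{-1}}{2\pi}\partial\bar\partial\epsilon_t$ with $\|\epsilon_t\|_{C^k}\to0$ for every $k$. Pulling back by $\sigma_t^{-1}$ and matching potentials up to constants gives
\[
\varphi_t \;=\; u_t \;+\; \Psi_{KRS}\circ\sigma_t^{-1} \;+\; \epsilon_t\circ\sigma_t^{-1} \;+\; c_t
\]
for suitable constants $c_t$. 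The key point is that $\Psi_{KRS}$ is a fixed smooth function on a compact manifold, so $\|\Psi_{KRS}\circ\sigma_t^{-1}\|_{C^0}=\|\Psi_{KRS}\|_{C^0}$ is bounded uniformly in $t$, and hence $R_t:=\varphi_t-u_t-c_t$ is uniformly bounded in $L^\infty(X)$. Taking suprema, $\varphi_t-\sup_X\varphi_t$ and $u_t-\sup_X u_t$ differ by a sequence $\widetilde R_t$ which is also uniformly bounded in $L^\infty$.

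Next, extract a subsequence $t_j\to\infty$ along which $\varphi_{t_j}-\sup\varphi_{t_j}\to\varphi_\infty$ in $L^1$ (as in Theorem \ref{thm:PSS_Rubinstein}); by the standard $L^1$-compactness of $\omega$-plurisubharmonic functions with normalized suprema, a further subsequence yields $u_{t_j}-\sup u_{t_j}\to u_\infty$ in $L^1$. Passing to the limit in the identity above shows $\varphi_\infty-u_\infty\in L^\infty(X)$. Consequently, for any open $U\subset X$ and $f\in\mathcal{O}_X(U)$, the integrand $|f|^2e^{-\gamma\varphi_\infty}$ differs from $|f|^2e^{-\gamma u_\infty}$ by a factor bounded above and below by positive constants; hence $\mathcal{I}(\gamma\varphi_\infty)=\mathcal{I}(\gamma u_\infty)$ and their zero schemes---and in particular their supports---coincide. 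The same bounded-difference argument handles the independence on the choice of $G$-invariant $\omega$: if $\omega'=\omega+\tfrac{\sqrt{-1}}{2\pi}\partial\bar\partial h$ with $h$ smooth $G$-invariant, the corresponding potentials $u_t'$ differ from $u_t$ by $h\circ\sigma_t^{-1}-h$, which is uniformly $L^\infty$-bounded.

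The only substantive obstacle lies in the first step: Theorem \ref{thm:tian-zhu} as stated asserts merely Cheeger-Gromov convergence, whereas the argument requires the stronger conclusion that the specific family $\sigma_t$ realizes the convergence without any additional diffeomorphism correction. This is what Tian-Zhu actually establish via a detailed study of the modified K\"ahler-Ricci flow $\tilde\omega_t:=\sigma_t^*\omega_t$, making essential use of Perelman's scalar-curvature and diameter estimates together with the $K_{v_{KRS}}$-invariance of the initial datum; the hypothesis $X\in\mathcal{W}_1$ guarantees that $v_{KRS}$ is the unique soliton vector field and lies in the one-dimensional $\mathcal{W}(X)$-fixed subspace of $\mathfrak{t}_\mathbb{C}$, ensuring that $\sigma_t$ is compatible with the $G$-symmetry throughout. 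All subsequent steps---the potential comparison, the extraction of compatible $L^1$-convergent subsequences, and the invariance of multiplier ideal sheaves under bounded perturbations---are then of a routine functional-analytic nature.
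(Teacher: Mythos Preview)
Your proposal is not a proof of the stated theorem at all. Theorem~\ref{thm:tian-zhu} is the Tian--Zhu convergence theorem for the K\"ahler--Ricci flow; the paper does not prove it but merely quotes it from \cite{tian-zhu06} as an external input. What you have written is instead an argument for Theorem~\ref{thm:reduction_MIS} (the reduction of the KRF-MIS to the MIS of the one-parameter family $\{(\sigma_t^{-1})^*\omega\}$), in which Theorem~\ref{thm:tian-zhu} is \emph{invoked} as a black box. So there is a basic mismatch between the statement you were asked to prove and the statement you actually argue for.

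Even read as a proof of Theorem~\ref{thm:reduction_MIS}, your argument has a genuine gap at the step you yourself flag: you need $\sigma_t^*\omega_t\to\omega_{KRS}$ in $C^\infty$, but the Tian--Zhu/Zhu machinery (as recalled in the paper via Proposition~\ref{prop:convergence}) does \emph{not} produce convergence along the specific one-parameter group $\sigma_t=\exp(tv_{KRS})$. What it yields is convergence along the auxiliary automorphisms $\rho_t$ given by shifts $x\mapsto x+s_t\beta_{KRS}$, where $s_t$ is determined by the minimum point of the evolving convex potential and satisfies only $ds_t/dt\to 1$. The paper therefore proceeds in two stages: first Lemma~\ref{lem:step1} shows the KRF-MIS equals the MIS from $\{(\rho_t^{-1})^*\omega\}$ by exactly the bounded-perturbation argument you sketch; then Lemma~\ref{lem:step2}, which is proved only \emph{after} the explicit formula for the complex singularity exponent (Theorem~\ref{thm:cse_face_formula_face}), shows that replacing $\rho_t$ by $\sigma_t$ does not change the support. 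Your shortcut of assuming the diffeomorphisms are already $\sigma_t$ bypasses this second step, and it is not clear one can justify it without something like the explicit toric computation the paper carries out.
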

Combining Theorem \ref{thm:wang-zhu} and Theorem \ref{thm:tian-zhu}, we find that the normalized K\"ahler-Ricci flow (\ref{eq:ricci_flow}) will converge in the sense of Cheeger-Gromov on toric Fano manifolds.
The same result is proved by Zhu \cite{zhu0703} without the assumption of the existence of K\"ahler-Ricci solitons on toric Fano manifolds.
These results suggest us that the KRF-MIS would be calculated by using K\"ahler-Ricci solitons on toric Fano manifolds which do not admit K\"ahler-Einstein metrics. 
In fact, we shall see that this attempt works well on toric Fano manifolds with certain symmetry.
For this purpose, let us explain about symmetry of toric Fano manifolds (cf. \cite{batyrev-selivanova99}, \cite{song05}).
Let $\mathcal{N}(T_{\mathbb{C}})$ be the normalizer of $T_{\mathbb{C}}$ in $\mbox{Aut}(X)$. Then the Weyl group $\mathcal{W}(X):=\mathcal{N}(T_{\mathbb{C}})/T_{\mathbb{C}}$ of $\mbox{Aut}(X)$ with respect to $T_{\mathbb{C}}$ equals to the finite subgroup of $\mbox{GL}(N, \mathbb{Z})$ consisting of all elements which preserve $P$ where $N\simeq \mathbb{Z}^n$ is the dual of $M$ (Proposition 3.1 in \cite{batyrev-selivanova99}).
Let $N_{\mathbb{R}}^{\mathcal{W}(X)}:=\{x\in N_\mathbb{R} \mid x^g=x \,\,\,\mbox{for all } g\in \mathcal{W}(X)\}$.
Then, the class of toric Fano manifolds which we shall consider is
\[
	\mathcal{W}_1:=
	\{
		X:\,\, \mbox{toric Fano manifold with }
		\dim N_\mathbb{R}^{\mathcal{W}(X)}=1
	\}.
\]

Then we shall prove Theorem \ref{thm:reduction_MIS} in the rest of this section and the next section.
Theorem \ref{thm:reduction_MIS} follows essentially from the argument of Zhu in \cite{zhu0703} (also of Tian-Zhu in \cite{tian-zhu06}). 
To be comprehensive as possible as we can, we shall recall the outline of the proof of \cite{zhu0703}. 
The key point in their proof of \cite{tian-zhu06} and \cite{zhu0703} for us is how to modify the K\"ahler-Ricci flow to converge to a K\"ahler-Ricci soliton.
Let $\omega_0$ be an initial K\"ahler form which is $G$-invariant.
Let us consider the equation of (\ref{eq:ricci_flow}) whose initial condition $c_0=0$, i.e.,
\begin{eqnarray}
	\label{eq:ricci_flow_potential2}
		\left\{\begin{array}{l}
			\frac{\partial\phi_t}{\partial t}
			=
			\log \frac{\det (g_{i\bar{j}}+\phi_{i\bar{j}})}{\det (g_{i\bar{j}})}+\phi_t -h_0,
		\\
			\phi_0 \equiv 0.			
		\end{array}\right.
\end{eqnarray}
Remark that $c_0$ in (\ref{eq:ricci_flow_potential2}) is different from the initial constant in \cite{pss0611} and \cite{rubinstein0708}, but we shall see in the proof of Lemma \ref{lem:step1} that this difference does not affect  the KRF-MIS.  
As an initial K\"ahler form $\omega_0$ on $X$, we take a standard metric determined by the moment polytope $P^*$ as follows.
Let $(\frac{1}{2}x_1+\sqrt{-1}\theta_1, \ldots, \frac{1}{2}x_n+\sqrt{-1}\theta_n)$ be an affine logarithm coordinates on $T_{\mathbb{C}}=T_\mathbb{R} \times N_\mathbb{R}$, i.e., $t_i=\exp(\frac{1}{2}x_i+\sqrt{-1}\theta_i)$ where $t=(t_1,\cdots, t_n)\in T_{\mathbb{C}}$.
Let $\{p^{(i)}\}_{i=1,\ldots, m}$ be the set of all lattice points contained in $P^{*}\subset M_\mathbb{R}$, and $\langle \cdot, \cdot \rangle$ is the natural inner product on $M_\mathbb{R}\times N_\mathbb{R}$.
Then we let $\omega_0:=\frac{\sqrt{-1}}{2\pi}\partial\bar{\partial}u_0$ on a dense orbit of the action of $T_{\mathbb{C}}$ where the quotient of $u_0$ is a convex function on $N_\mathbb{R}$ defined by
\begin{equation}\label{eq:canonical_metric}
	u_0(x):=\log \biggl(
	\sum_{i=1}^{m} e^{\langle p^{(i)}, x \rangle}
	\biggr)
\end{equation}
and $x=(x_1, \ldots, x_n)\in N_\mathbb{R}$.
It is known that $\omega_0$ can be extended to a well-defined K\"ahler form on $X$.
In fact $\omega_0$ is the pull-back of the Fubini-Study form on $\mathbb{CP}^{m-1}$ with respect to the anticanonical embedding $X\hookrightarrow \mathbb{P}(H^0(X, K^{-1}_X)^*)$.
Remark that the image of the moment map $\mu: X \to M_\mathbb{R}$ with respect to $\omega_0$ equals to $P^*$.
Obviously $\omega_0$ and $u_0$ are $\mathcal{W}(X)$-invariant.
By Lemma 4.3 in \cite{song05}, we find that there are positive constants $c$ and $C$ such that
\begin{equation}\label{eq:estimate_volume_form}
	c \le
	e^{u_0}\det \biggl(
	\frac{\partial^2 u_0}{\partial x_i \partial x_j}
	\biggr)
	\le 
	C.
\end{equation}
From (\ref{eq:ricci_discrepancy}) and (\ref{eq:estimate_volume_form}), we can assume 
\[
	\det ((u_0)_{ij}) =\exp (-u_0-h_0).
\]
Since $h_0$ and $\phi_t$ in (\ref{eq:ricci_flow_potential2}) are also $T_\mathbb{R}$-invariant, then we can reduce (\ref{eq:ricci_flow_potential2}) to a real Monge-Amp\`ere equation
\begin{eqnarray}
	\label{eq:real_MA}
		\left\{\begin{array}{l}
			\frac{\partial u}{\partial t}
			=
			\log \det (u_{ij}) +u,
		\\
			u(0,\cdot)
			=
			u_0,
		\end{array}\right.
\end{eqnarray}
where $u_t=u(t,\cdot)=u_0+\phi_t$ 
on $N_\mathbb{R}$. 
Here we denote the reduced potential functions of $\omega_t$ on $N_\mathbb{R}$, which is the quotient of $\phi_t$ to $N_\mathbb{R}$, also by the same $\phi_t$ to avoid the complicacy of symbols.
Note that the quotient of $\phi_t$ to $N_\mathbb{R}$ is normalized by requiring that the image of the gradient map of $u_t$ in $M_\mathbb{R}$ equals to $P^*$.
For each $t$ let $h_t$ and $c_t$ be the normalized Ricci discrepancy and a constant defined by
\[
		\int_X e^{h_t} \omega_t 
	=
		\int_X \omega_0^n,
	\,\,\,
		h_t
	=	
		-\frac{\partial \phi_t}{\partial t}+c_t,
\]
where $\omega_t$ is the solution of the K\"ahler-Ricci flow (\ref{eq:ricci_flow}).
As for $h_t$ above, we refer the following lemma which is proved by Perelman.
\begin{lemma}[Perelman, see also \cite{sesum-tian05}]\label{lem:perelman}
\[
	|h_t|\le A,
\]
where $A$ is independent of $t$.
\end{lemma}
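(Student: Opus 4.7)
The plan is to follow Perelman's original argument, carried out in the K\"ahler setting by Sesum-Tian \cite{sesum-tian05}, the essential input being the monotonicity of Perelman's $\mathcal{W}$-entropy functional along the Ricci flow.

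First, I would derive the evolution equation for $h_t$. Differentiating the defining relation $\mbox{Ric}(\omega_t)-\omega_t = \frac{\sqrt{-1}}{2\pi}\partial\bar{\partial} h_t$ in $t$, using the flow $\frac{d}{dt}\omega_t=-\mbox{Ric}(\omega_t)+\omega_t$, and the relation $h_t=-\frac{\partial\phi_t}{\partial t}+c_t$, one obtains
\[
\frac{\partial h_t}{\partial t} = \Delta_t h_t + h_t - a_t,
\]
where $a_t$ is a space-independent function determined by preserving $\int_X e^{h_t}\omega_t^n=\int_X\omega_0^n$. The positive linear coefficient $+h_t$ on the right-hand side prevents a naive maximum-principle argument from working, and ruling out the corresponding exponentially growing mode is the core difficulty.

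Second, I would invoke Perelman's $\mu$-functional monotonicity along the unnormalized Ricci flow, and then rescale it to the normalized K\"ahler-Ricci flow (\ref{eq:ricci_flow}). This yields a uniform no-local-collapsing property for the family $\{(X,\omega_t)\}$. On a compact Fano manifold this in turn implies a uniform lower bound on the scalar curvature $R(\omega_t)$, a uniform diameter bound, and a uniform Sobolev inequality along the flow. These are precisely the ingredients needed to control the nonlocal term $a_t$: if $\sup_X h_t$ were to diverge then the $L^1$-normalization $\int_X e^{h_t}\omega_t^n=\int_X\omega_0^n$ could not be maintained, and integrating the evolution equation against $e^{h_t}\omega_t^n$ combined with a parabolic Moser iteration makes this obstruction quantitative.

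Third, with uniform Sobolev and scalar-curvature bounds in hand, standard parabolic estimates applied to the evolution equation for $h_t$ (equivalently, for $\frac{\partial\phi_t}{\partial t}$) upgrade the resulting integral bounds to the desired uniform $C^0$-bound $|h_t|\le A$. The main obstacle is the monotonicity of Perelman's entropy and its compatibility with the normalizations used in the K\"ahler-Ricci flow (\ref{eq:ricci_flow_potential2}); since this is carried out in detail in \cite{sesum-tian05}, I would invoke their work rather than rederive those estimates here.
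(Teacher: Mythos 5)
The paper does not prove this lemma; it is Perelman's unpublished estimate, quoted here with a pointer to Sesum--Tian \cite{sesum-tian05} for a detailed account, so there is no in-paper argument against which to compare your proposal. As a summary of the strategy in that cited reference, your sketch names the right ingredients: the evolution equation $\partial_t h_t = \Delta_t h_t + h_t - a_t$, the obstruction posed by the positive zeroth-order term, and the monotonicity of Perelman's $\mathcal{W}$- and $\mu$-functionals together with no-local-collapsing as the global input.

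One caveat on your third step, though. You describe the uniform diameter bound and a uniform Sobolev inequality as being obtained first from no-local-collapsing, with a parabolic Moser iteration then upgrading integral control of $h_t$ to the $C^0$ bound. In the Sesum--Tian argument the logic is more intertwined than that chain suggests: the $\mu$-functional yields a lower bound on the scalar curvature and weighted integral bounds on $h_t$ and $\nabla h_t$; a maximum-principle computation then produces pointwise bounds of the shape $|\nabla h_t|^2 \le C(\mathrm{osc}\, h_t + 1)$ and $\Delta h_t \le C(\mathrm{osc}\, h_t + 1)$; and the diameter bound and the $C^0$ bound on $h_t$ are extracted \emph{simultaneously} from a covering/volume-comparison contradiction that uses $\kappa$-non-collapsing and the normalization $\int_X e^{h_t}\omega_t^n = \int_X \omega_0^n$. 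Moser iteration against a pre-established Sobolev inequality is not the mechanism, and the diameter bound is not available before the oscillation of $h_t$ is controlled. Since the lemma is treated as a black box in this paper the distinction is harmless here, but it would matter if you actually tried to execute your outline as written.
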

For each solution $u_t$ of (\ref{eq:real_MA}), let $\bar{u}_t:=u_t-c_t$ and $m_t:=\inf_{x\in \mathbb{R}^n}\bar{u}_t(x)$.
Let $x_t$ be the minimal point of $\bar{u}_t$, $\bar{\bar{u}}_t:=\bar{u}_t(\cdot + x_t)-m_t$ and $\bar{\phi}_t$ be $\bar{\bar{u}}_t -u_0$.
The existence of $x_t$ for each $t$ is assured as follows. Since $\phi_t$ is the quotient of the function over $X$, it it bounded over $N_\mathbb{R}$. Then the existence of $x_t$ is equivalent to the existence of the minimal point of $u_0$, which is assured because $u_0$ is approximated by linear functions near the infinity in $N_\mathbb{R}$.
In fact, for any vector $x\in N_\mathbb{R}$, we have
\begin{eqnarray*}
	0<s\max_i\langle p^{(i)}, x \rangle
	\le
	u_0(sx)
	\le
	s\max_i\langle p^{(i)}, x \rangle + m
\end{eqnarray*}
for all $s\in \mathbb{R}_{\ge 0}$ where $m$ is the number of lattice points contained in $P^*$.
From Lemma \ref{lem:perelman} and the similar argument in \cite{wang-zhu04},
\begin{proposition}[Lemma 2.1 \cite{zhu0703}, Proposition 3.1 \cite{zhu0703}]
\[
	|m_t|\le C, \,\,\, \|\bar{\phi}_t\|_{C^0} \le C,
\]
where $C$ is independent of $t$.
\end{proposition}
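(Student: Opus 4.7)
The plan is to follow Wang-Zhu's approach \cite{wang-zhu04} for K\"ahler-Ricci solitons, adapted to the K\"ahler-Ricci flow as in Zhu \cite{zhu0703}, with Perelman's uniform bound on $h_t$ (Lemma \ref{lem:perelman}) as the essential extra input. The two estimates are established in sequence: first $|m_t|\le C$, then $\|\bar\phi_t\|_{C^0}\le C$.

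For the bound $|m_t|\le C$, the key is an explicit formula for $c_t$. Combining $h_t=-\partial_t\phi_t+c_t$ with (\ref{eq:ricci_flow_potential2}) gives $e^{h_t}\omega_t^n=e^{c_t-\phi_t+h_0}\omega_0^n$, and the normalization $\int_X e^{h_t}\omega_t^n=\int_X\omega_0^n$ yields $e^{c_t}=\int_X\omega_0^n/\int_X e^{-\phi_t+h_0}\omega_0^n$. Reducing to the torus via $\det((u_0)_{ij})=e^{-u_0-h_0}$ and the moment-map change of variables, this simplifies to
\[
	e^{c_t}=\frac{\mbox{Vol}(P^*)}{\int_{N_\mathbb{R}}e^{-u_t}\,dx},
\]
so that
\[
	m_t=\log\int_{N_\mathbb{R}}e^{\min u_t-u_t}\,dx-\log\mbox{Vol}(P^*).
\]
The integrand lies in $(0,1]$, so I need two-sided bounds on this integral, uniform in $t$. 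The upper bound is convex-geometric: since $\nabla u_t(N_\mathbb{R})=\mbox{int}(P^*)$ with $0$ in the interior, standard arguments yield $u_t(x)-\min u_t\ge r|x-x_t|-C_0$ uniformly, where $r:=\mbox{dist}(0,\partial P^*)>0$, forcing exponential decay at a rate depending only on $P^*$. The lower bound requires that $u_t$ not concentrate too sharply at $x_t$, and this is supplied by Perelman's estimate: combining $|h_t|\le A$ with the real Monge-Amp\`ere equation (\ref{eq:real_MA}) controls $\log\det((u_t)_{ij})+u_t$ uniformly up to the additive $c_t$, which together with the diameter bound for the flow yields a uniform volume lower bound on a small sublevel set of $u_t-\min u_t$.

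For $\|\bar\phi_t\|_{C^0}\le C$, the translated renormalized function $\bar{\bar u}_t(x)=\bar u_t(x+x_t)-m_t$ is a $T_\mathbb{R}$-invariant convex function on $N_\mathbb{R}$ with minimum $0$ attained at the origin and with gradient image equal to $P^*$. The canonical reference $u_0$ enjoys exactly the same three properties, up to an additive constant determined by the lattice points of $P^*$. A standard comparison argument---bounding $\bar{\bar u}_t$ from below by the supporting linear function at a chosen interior point of $P^*$ and from above through the support function of $P^*$, now applied uniformly in $t$ thanks to the non-degeneracy just established---yields the desired two-sided $C^0$ bound on $\bar\phi_t=\bar{\bar u}_t-u_0$.

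The hardest point will be the uniform-in-$t$ lower bound in the first step: convex analysis alone allows the Hessian of $u_t$ to degenerate along some direction at $x_t$ as $t\to\infty$, which would collapse the integral $\int_{N_\mathbb{R}}e^{\min u_t-u_t}\,dx$. Ruling out this degeneration is precisely where Perelman's estimate enters essentially, through the Monge-Amp\`ere equation (\ref{eq:real_MA}); without it no such uniform bound can be hoped for. Once this step is in place, the rest of the argument reduces to convex geometry determined by the fixed polytope $P^*$.
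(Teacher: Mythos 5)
Your reduction to bounding $\int_{N_\mathbb{R}} e^{\min u_t - u_t}\,dx$ above and below via the explicit formula for $c_t$ is the right framework and matches the structure of Zhu's argument, which the paper simply cites. However, you have the difficulty of the two bounds on this integral exactly reversed, and the reversal is not cosmetic. The \emph{lower} bound is the elementary one: since the image of $\nabla u_t$ lies in $P^*$, one has $|\nabla u_t|\le R:=\max_{y\in P^*}|y|$ everywhere, hence $u_t(x)-\min u_t\le R|x-x_t|$, and therefore $\int e^{\min u_t - u_t}\,dx \ge \int e^{-R|x-x_t|}\,dx$, a positive constant depending only on $n$ and $P^*$; no Monge-Amp\`ere equation and no input from Lemma \ref{lem:perelman} is needed here. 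The \emph{upper} bound is the substantial step: the estimate $u_t(x)-\min u_t \ge r|x-x_t|-C_0$ with $C_0$ uniform in $t$ does \emph{not} follow from $\nabla u_t(N_\mathbb{R})=\mbox{int}(P^*)$ and convexity alone, because $\nabla u_t$ may approach $\partial P^*$ arbitrarily slowly along rays from $x_t$, so the sublevel set $\{u_t-\min u_t<1\}$ could have uncontrolled volume. Ruling this out is precisely where (\ref{eq:real_MA}) combined with Perelman's bound $|h_t|\le A$ enters: the Monge-Amp\`ere measure $\det((u_t)_{ij})\,dx$ has fixed total mass $\mbox{Vol}(P^*)$ and is uniformly comparable to $e^{c_t-u_t}\,dx$, which pins down the size of the unit sublevel set.

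Your closing diagnostic compounds the error: a degenerating Hessian at $x_t$ (i.e.\ $u_t$ becoming flat near its minimum) makes $\int e^{\min u_t-u_t}\,dx$ blow up, not collapse, so it threatens the \emph{upper} bound, not the lower one. The second step of your plan, comparing $\bar{\bar{u}}_t$ with $u_0$ by convexity once non-degeneracy of $u_t$ is established, is described in the right spirit and matches Wang-Zhu/Zhu, but it rests essentially on first obtaining the upper bound on the integral, which you have classified as the easy direction. As written, the proposal would therefore fail at the central step.
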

To get higher order estimate, we shall modify $\bar{\phi}_t$.
\begin{lemma}[Lemma 4.6 \cite{chen-tian06}, Lemma 4.1 \cite{zhu0703}]\label{lem:path_modification}
Let $i$ be any nonnegative integer. Then the distance between $x_i$ and $x_{i+1}$ are uniformly bounded, i.e., $|x_i-x_{i+1}|<C$.
\end{lemma}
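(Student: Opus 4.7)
The plan is to combine the uniform $C^0$ bound $\|\bar\phi_t\|_{C^0}\le C$ from the preceding proposition with Perelman's estimate $|h_t|\le A$ and the asymptotic piecewise-linearity of $u_0$ near infinity, which is encoded in (\ref{eq:canonical_metric}) and in the two-sided bound $s\max_k\langle p^{(k)},\xi\rangle\le u_0(s\xi)\le s\max_k\langle p^{(k)},\xi\rangle + m$.

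First I would translate the bound on $\bar\phi_t$ into the pointwise inequality $|\bar u_t(z) - m_t - u_0(z - x_t)|\le C$ for all $z\in N_\mathbb{R}$ and $t\ge 0$. Evaluating at $t=i$ and $t=i+1$ and taking the difference, and using the uniform bound on $|m_t|$, yields
\[
|u_0(z-x_i) - u_0(z-x_{i+1}) - (\bar u_{i+1}(z) - \bar u_i(z))|\le C_1
\]
with $C_1$ independent of $i$ and $z$. To control the time-increment $\bar u_{i+1}(z) - \bar u_i(z)$, I would differentiate $\bar u_t = u_t - c_t$ in $t$ and substitute the flow equation $\partial_t\phi_t = c_t - h_t$, obtaining
\[
\bar u_{i+1}(z) - \bar u_i(z) = \biggl(\int_i^{i+1}c_t\,dt - (c_{i+1}-c_i)\biggr) - \int_i^{i+1} h_t(z)\,dt = \kappa_i + \eta_i(z),
\]
with $\kappa_i$ independent of $z$ and $|\eta_i(z)|\le A$ by Perelman's lemma. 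Substituting and eliminating $\kappa_i$ by subtracting the inequality at $z=0$ then gives the bounded-oscillation estimate
\[
|(u_0(z-x_i)-u_0(z-x_{i+1}))-(u_0(-x_i)-u_0(-x_{i+1}))|\le C_2
\]
for every $z\in N_\mathbb{R}$, where $C_2$ is independent of $i$.

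Next I would exploit the asymptotic behavior of $u_0$. For a direction $\xi\in N_\mathbb{R}$ lying in the relative interior of a maximal cone of the normal fan of $P^*$, there is a unique vertex $p^{(k_\xi)}$ of $P^*$ realizing $\max_k\langle p^{(k)},\xi\rangle$, and the sandwich estimate for $u_0$ forces
\[
u_0(s\xi - x_i) - u_0(s\xi - x_{i+1})\longrightarrow \langle p^{(k_\xi)}, x_{i+1} - x_i\rangle \quad\text{as } s\to\infty.
\]
Passing to the limit in the oscillation estimate and letting $\xi$ vary so that $k_\xi$ ranges over all vertices of $P^*$, I conclude
\[
|\langle p^{(k)} - p^{(l)}, x_{i+1} - x_i\rangle|\le C_2
\]
for every pair of vertices $p^{(k)},p^{(l)}$ of $P^*$. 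Property (b) of the Fano polytope places the origin in the interior of $P^*$, so $P^*$ is full-dimensional and the vectors $\{p^{(k)}-p^{(l)}\}$ span $M_\mathbb{R}$; this forces $|x_i - x_{i+1}|\le C_3$ with $C_3$ independent of $i$, which is the desired conclusion.

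The main obstacle I anticipate is the middle step: since a priori $|\kappa_i|$ need not be bounded in $i$, it is essential to cancel the drift by subtracting the inequality at $z=0$ rather than trying to bound $\kappa_i$ directly. Once the clean bounded-oscillation estimate for the pure $u_0$-difference is in hand, the conclusion is soft and driven entirely by the piecewise-linear asymptotics of $u_0$ and the geometry of the vertex set of $P^*$.
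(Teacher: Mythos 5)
The paper does not actually prove this lemma; it is cited from Chen--Tian (Lemma~4.6) and Zhu (Lemma~4.1). Your reconstruction is correct, and it rests on exactly the right ingredients: the uniform bound $\|\bar\phi_t\|_{C^0}\le C$ rewritten as $|\bar u_t(z)-m_t-u_0(z-x_t)|\le C$, Perelman's estimate $|h_t|\le A$ to control the $z$-dependence of the time increment $\bar u_{i+1}-\bar u_i$, and the piecewise-linear asymptotics of $u_0$, which reduce to the geometry of the vertex set of $P^*$.

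Two small remarks. First, your motivating comment that ``a priori $|\kappa_i|$ need not be bounded'' is actually not true: from $\bar u_{i+1}-\bar u_i=\kappa_i+\eta_i$ with $|\eta_i|\le A$, taking infima over $z$ gives $|m_{i+1}-m_i-\kappa_i|\le A$, and then $|m_t|\le C$ forces $|\kappa_i|\le 2C+A$. This does not affect your argument (subtracting at $z=0$ is a harmless cancellation), but it does mean the drift term was never really a threat. Second, once $|\kappa_i|$ is seen to be bounded, there is a slightly more economical way to finish: evaluate the identity $\bar u_{i+1}(z)=\bar u_i(z)+\kappa_i+\eta_i(z)$ at $z=x_i$ to get $\bar u_{i+1}(x_i)\le m_i+\kappa_i+A\le C'$, then combine with $\bar u_{i+1}(x_i)\ge m_{i+1}+u_0(x_i-x_{i+1})-C$ and the linear lower bound $u_0(y)\ge\lambda|y|$ (which holds with $\lambda>0$ because the origin is interior to $P^*$). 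This gives $|x_i-x_{i+1}|\le C''/\lambda$ directly, sidestepping the directional limits and the vertex-spanning argument. Both routes use the same geometry; yours makes the polytope structure explicit, while the shortcut packages it into the single constant $\lambda$.
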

By replacing the original $x_t$ by a straight line segment $\overline{x_{i}x_{i+1}}$ for each unit interval $[i, i+1]$, Lemma \ref{lem:path_modification} allows us to modify the family of points $\{x_t\} \subset N_{\mathbb{R}}$ to a new family $\{x_t^{'} \}$ satisfying
\begin{equation}\label{eq:smoothing}
	|x_t-x_t^{'}| \le C,
	\,\,\,
	\biggl| \frac{dx_t^{'}}{dt} \biggr| \le C.
\end{equation}
Under our assumption that $X$ is contained in $\mathcal{W}_1$, we can choose a simple $\{x'_t\}$ as follows.
Let  $\beta_{KRS}$ be the vector in $N_\mathbb{R}$ which induces the holomorphic vector field $v_{KRS}$ of the K\"ahler-Ricci soliton. 
More precisely, if $v_{KRS}^{\sharp}$ is the real vector field induced by $\beta_{KRS}$ then $v_{KRS}=\frac{1}{2}(v_{KRS}^{\sharp}-\sqrt{-1}(Jv_{KRS}^{\sharp}))$.
Since $\beta_{KRS}$ is $\mathcal{W}(X)$-invariant and $X\in \mathcal{W}_1$, the line $\{s\beta_{KRS} \mid s\in \mathbb{R}\}$ equals to the fixed subspace of $N_\mathbb{R}$ under the action of $\mathcal{W}(X)$. 
Since $u_t$ is also $\mathcal{W}(X)$-invariant, $\{x_t\}_t$ is contained in the line $\{s\beta_{KRS} \mid s \in \mathbb{R}\}$, that is to say, for each $t$ there is a constant $s_t \in \mathbb{R}$ such that $x_t=s_t\beta_{KRS}$.
This fact and (\ref{eq:smoothing}) allow us to assume that $x_t^{'}=s_t\beta_{KRS}$ and $|ds_t/dt|$ is uniformly bounded for all $t$.
This assumption will simplify the calculation of the MIS  later when we prove Lemma \ref{lem:step2}.

Let $\rho_t$ be a holomorphic transformation, which induces
the shift transformation on $N_\mathbb{R}$ defined by $x \mapsto x +s_t\beta_{KRS}$ for each $t$.
Let $\tilde{\phi}_t$ be a K\"ahler potential defined by
\begin{equation}\label{eq:definition_tilde_phi}
	\rho_t^*\omega_{\phi_t}
	=\omega_0+\frac{\sqrt{-1}}{2\pi}\partial\bar{\partial}\tilde{\phi}_t,
\end{equation}
which is what we desire.
Remark that $\tilde{\phi}_t$ is equal to $u_0(\cdot+s_t\beta_{KRS}) -u_0(\cdot)$ up to constant.
The ambiguity of an additive constant in (\ref{eq:definition_tilde_phi}) is removed by requiring 
\begin{equation}\label{eq:modified_ricci_flow}
	\frac{\partial \tilde{\phi}_t}{\partial t}
	=
	\log \frac{\det (g_{i\bar{j}}+\tilde{\phi}_{i\bar{j}})}{\det(g_{i\bar{j}})}
	+\tilde{v}_t(\tilde{\phi}_t)+\tilde{\phi}_t-\tilde{h}_0+\theta_{\tilde{v}_t}
\end{equation}
on $X$, where 
\[
	\tilde{v}_t:=\frac{d x_t^{'}}{dt}=\beta_{KRS}\cdot\frac{d s_t}{dt},
\] 
$\theta_{\tilde{v}_t}:=\tilde{v}_t(u_0)$, and $\tilde{h}_0$ is the renormalized function of $h_0$ satisfying
\begin{eqnarray}
	\nonumber
		\frac{1}{V}\int_X(\tilde{h}_0-\theta_{v_{KRS}})\omega_0^n
	&=&
		-\frac{1}{V}\int_{0}^{\infty}\int_X
		\|\bar{\partial}\frac{\partial \phi'_t}{\partial t}\|^2
	     \exp(\theta_{v_{KRS}}+v_{KRS}(\phi'_t)-t)
	\\
	\label{eq:normalization_h0}
	&&
		\wedge(\sigma_t^*\omega_{\phi_t})^n\wedge dt
\end{eqnarray}
as Lemma 4.2 \cite{tian-zhu06}.
In (\ref{eq:normalization_h0}) $V$ denotes the volume of $X$ with respect to $\omega_0$, $\theta_{v_{KRS}}=v_{KRS}(u_0)$ and $\phi^{'}_t$ is the K\"ahler potential defined by
\[
	\sigma_t^* \omega_{\phi_t}=\omega_0+\frac{\sqrt{-1}}{2\pi}\partial\bar{\partial}
	\phi_t^{'}
\]
and
\[
	\frac{\partial \phi'_t}{\partial t}
	=
	\log\frac{\det (g_{i\bar{j}}+(\phi'_t)_{i\bar{j}})}{\det (g_{i\bar{j}})}
	+v_{KRS}(\phi'_t)+\phi'_t-h_0+\theta_{v_{KRS}}.
\]
Then, Tian-Zhu \cite{tian-zhu06} (also \cite{zhu0703}) proved 
\begin{proposition}[\cite{tian-zhu06}, \cite{zhu0703}]\label{prop:convergence}
The family $\{\omega_{\tilde{\phi}_t}\}_t$ converges to a  K\"ahler-Ricci soliton associated to $v_{KRS}$ and $\tilde{v}_t$ converges to $v_{KRS}$ as $t$ goes to the infinity.
\end{proposition}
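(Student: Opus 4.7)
The plan is to establish $C^\infty$--precompactness of the modified family $\{\omega_{\tilde\phi_t}\}$ and then to identify any subsequential limit as the K\"ahler--Ricci soliton $\omega_{KRS}$ via a monotone functional. First I would upgrade the $C^0$--estimate. Under the $\mathcal{W}_1$--hypothesis the minima $x_t$ are forced to lie on the one-dimensional fixed line $\mathbb{R}\cdot\beta_{KRS}$, so $\rho_t$ is a genuine one-parameter family along $\beta_{KRS}$ with $|ds_t/dt|$ uniformly bounded by (\ref{eq:smoothing}). The preceding bounds $|m_t|\le C$ and $\|\bar\phi_t\|_{C^0}\le C$ then transfer to $\|\tilde\phi_t\|_{C^0}\le C$. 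Combining this with Perelman's bound $|h_t|\le A$ and the modified flow equation (\ref{eq:modified_ricci_flow}), Yau's $C^2$--estimate and Calabi's $C^3$--estimate (in the form used by Cao \cite{cao85}) give uniform $C^k$--bounds on $\tilde\phi_t$ for every $k$.

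Next I would identify the limit. The crucial ingredient is a monotone functional adapted to $v_{KRS}$; the normalization (\ref{eq:normalization_h0}) of $\tilde h_0$ is precisely calibrated so that a Mabuchi--type energy of the form
\begin{equation*}
\mathcal{M}_{v_{KRS}}(\phi'_t):=-\frac{1}{V}\int_X \frac{\partial\phi'_t}{\partial t}\,\exp\!\bigl(\theta_{v_{KRS}}+v_{KRS}(\phi'_t)\bigr)\,(\sigma_t^*\omega_{\phi_t})^n
\end{equation*}
is nonincreasing, with time-derivative equal to the negative of the weighted $L^2$--norm of $\bar\partial(\partial\phi'_t/\partial t)$. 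Integrability on $[0,\infty)$ together with the already established $C^\infty$--bounds forces $\bar\partial(\partial\phi'_t/\partial t)\to 0$ along a subsequence, so $\partial\phi'_t/\partial t$ is holomorphic in the limit and the limit metric satisfies $\mathrm{Ric}(\omega_\infty)-\omega_\infty=\mathcal{L}_{v_\infty}\omega_\infty$. Because $\mathcal{W}(X)$--invariance of $u_t$ and $X\in\mathcal{W}_1$ confine the limiting vector field to $\mathbb{R}\cdot\beta_{KRS}$, translating through $\rho_t^{-1}$ yields a soliton limit for $\omega_{\tilde\phi_t}$ as well.

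Finally I would invoke the uniqueness theorem for K\"ahler--Ricci solitons (Tian--Zhu \cite{tian-zhu02}): any soliton on a toric Fano manifold coincides with the canonical $(v_{KRS},\omega_{KRS})$ up to automorphism. This pins down both the limit $\omega_{\tilde\phi_t}\to\omega_{KRS}$ and the vector-field limit $\tilde v_t=(ds_t/dt)\beta_{KRS}\to v_{KRS}$, and upgrades subsequential convergence to convergence of the full flow by a standard contradiction argument against precompactness. The main obstacle is the monotonicity/identification step: extracting a soliton (rather than a mere K\"ahler--Einstein or an arbitrary K\"ahler limit) from the non-autonomous modified flow requires the delicate renormalization (\ref{eq:normalization_h0}) of $\tilde h_0$, without which $\mathcal{M}_{v_{KRS}}$ is not monotone and the limit cannot be forced to be a soliton for the \emph{correct} vector field $v_{KRS}$.
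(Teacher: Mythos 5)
The paper does not prove Proposition~\ref{prop:convergence} --- it states it as a quoted result of Tian--Zhu \cite{tian-zhu06} and Zhu \cite{zhu0703}, so there is no in-paper argument to compare your proposal against. Your sketch is a reasonable reconstruction of the strategy those references actually use: $C^{0}$--control of the shifted potential via the real Monge--Amp\`ere reduction and Perelman's bound $|h_t|\le A$, higher--order estimates \`a la Yau/Calabi/Cao, and identification of the limit through a monotone energy plus the Tian--Zhu uniqueness theorem. Two points are glossed over. First, the monotone quantity in \cite{tian-zhu06} is the \emph{modified Mabuchi $K$--energy} (equivalently Perelman's $W$--entropy), whose derivative along the modified flow equals $-\frac{1}{V}\int_X\|\bar\partial\,\partial_t\phi'_t\|^{2}e^{\theta_{v}+v(\phi'_t)}\omega_{\phi'_t}^{n}$; the expression you write down is not that functional but something closer to its value, and the normalization (\ref{eq:normalization_h0}) --- note the decaying weight $e^{-t}$ --- is there to pin down the additive constant in $\tilde\phi_t$ so that a particular improper integral converges, not to serve directly as the monotone energy. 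Second, turning ``the derivative is integrable'' into ``the derivative tends to $0$'' requires the energy to be bounded below, which in this setting comes from the existence of a K\"ahler--Ricci soliton (Wang--Zhu, Theorem~\ref{thm:wang-zhu}); that appeal is missing from your outline. With those caveats, the plan is sound and tracks the cited argument.
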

Remark that $\frac{d s_t}{dt}\to1$ as $t\to \infty$, because $\tilde{v}_t$ converges to $v_{KRS}$.
Therefore we can conclude the following lemma. 
\begin{lemma}\label{lem:step1}
The KRF-MIS equals to the MIS coming from a family of K\"ahler potentials $\psi_t$ of $\{(\rho_t^{-1})^*\omega\}_t$ with respect to a fixed K\"ahler form $\omega$, which is normalized by $\sup \psi_t=0$, where $\omega$ is any $G$-invariant K\"ahler form.
\end{lemma}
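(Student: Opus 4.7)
The plan is to compare the solution $\phi_t$ of (\ref{eq:ricci_flow_potential2}) directly with the modified potentials $\tilde\phi_t$ from Proposition~\ref{prop:convergence}, whose associated K\"ahler forms converge. All divergent behaviour of $\phi_t-\sup\phi_t$ should then come from the biholomorphism $\rho_t$, and therefore be captured by a K\"ahler potential of the pulled-back form $(\rho_t^{-1})^*\omega_0$.

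First I would dispose of the discrepancy between the initial conditions in (\ref{eq:ricci_flow_potential}) and (\ref{eq:ricci_flow_potential2}): since both initial potentials are constants, the initial K\"ahler forms agree, so by uniqueness of the flow $\omega_t$ is the same for both, forcing $\varphi_t-\phi_t$ to be a function of $t$ alone; substituting into the flow equation gives $\varphi_t=\phi_t+c_0 e^{t}$, which drops out of $\varphi_t-\sup\varphi_t=\phi_t-\sup\phi_t$. Hence the KRF-MIS may be computed from $\phi_t$ in place of $\varphi_t$.

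Next I would decompose $\phi_t$ using (\ref{eq:definition_tilde_phi}). Applying $(\rho_t^{-1})^*$ yields
\[
\omega_{\phi_t}=(\rho_t^{-1})^{*}\omega_0+\frac{\sqrt{-1}}{2\pi}\partial\bar\partial\bigl((\rho_t^{-1})^{*}\tilde\phi_t\bigr),
\]
so writing $\psi^{(0)}_t$ for a K\"ahler potential of $(\rho_t^{-1})^{*}\omega_0$ relative to $\omega_0$, the $\partial\bar\partial$-lemma gives $\phi_t=\psi^{(0)}_t+(\rho_t^{-1})^{*}\tilde\phi_t+c(t)$ for some constant $c(t)$. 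By Proposition~\ref{prop:convergence} (together with the normalization built into (\ref{eq:modified_ricci_flow})), the family $\{\tilde\phi_t\}$ converges in $C^\infty$ to the soliton potential, so in particular $\|\tilde\phi_t\|_{C^{0}(X)}\le C$ uniformly in $t$; since $\rho_t$ is a biholomorphism of $X$, the same bound transfers to $(\rho_t^{-1})^{*}\tilde\phi_t$. Taking suprema and subtracting then shows that $\phi_t-\sup\phi_t$ and $\psi^{(0)}_t-\sup\psi^{(0)}_t$ differ by a function that is uniformly bounded in $(x,t)$.

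Along the $L^{1}$-convergent subsequence $\phi_{t_j}-\sup\phi_{t_j}\to\varphi_\infty$ furnished by Theorem~\ref{thm:PSS_Rubinstein}, the uniformly bounded family $\psi^{(0)}_{t_j}-\sup\psi^{(0)}_{t_j}$ admits a further $L^{1}$-convergent subsequence with limit $\psi^{(0)}_\infty$, and by construction $\varphi_\infty-\psi^{(0)}_\infty\in L^\infty$; hence $\mathcal I(\gamma\varphi_\infty)=\mathcal I(\gamma\psi^{(0)}_\infty)$ for every $\gamma>0$. To pass from $\omega_0$ to an arbitrary $G$-invariant K\"ahler form $\omega=\omega_0+\frac{\sqrt{-1}}{2\pi}\partial\bar\partial f$ (with $f$ smooth), I would observe that the K\"ahler potential $\psi_t$ of $(\rho_t^{-1})^{*}\omega$ relative to $\omega$ differs from $\psi^{(0)}_t$ by $(\rho_t^{-1})^{*}f-f$ up to a constant, which is smooth and uniformly bounded; this change once more preserves $\mathcal I(\gamma\,\cdot\,)$. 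The hard part will be the uniform $C^{0}$ bound on $\tilde\phi_t$, which is exactly the content of the Tian-Zhu/Zhu convergence result encoded in Proposition~\ref{prop:convergence}; granted that, everything else is routine bookkeeping about how bounded shifts preserve multiplier ideal sheaves.
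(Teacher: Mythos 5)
Your argument is correct and follows essentially the same route as the paper: both decompose $\phi_t$ via the identity $\omega_{\phi_t}=(\rho_t^{-1})^*\omega_{\tilde\phi_t}$, extract a uniform $C^0$ bound on the auxiliary potential from the Tian--Zhu convergence, and conclude by observing that a uniformly bounded shift of the potential does not change the multiplier ideal sheaf. The only cosmetic difference is that the paper works with the $\sup$-normalized potential $\psi_t'$ of $\omega_{\tilde\phi_t}$ relative to $\omega$ directly (whose $C^0$ bound follows immediately from $C^\infty$ convergence of the metrics), whereas you track $\tilde\phi_t$ itself with the normalization (\ref{eq:modified_ricci_flow}) and handle the passage from $\omega_0$ to a general $G$-invariant $\omega$ by the uniformly bounded correction $(\rho_t^{-1})^*f-f$ in a separate step.
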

\begin{proof}
Firstly we shall see that the difference of the choice of initial constant $c_0$ does not matter when we consider the KRF-MIS in the sense of \cite{rubinstein0708}.
In fact, the difference between (\ref{eq:ricci_flow_potential}) and (\ref{eq:ricci_flow_potential2}) induces that $\phi_t=\varphi_t-c_0e^t$ where the constant $c_0$ is the initial condition in Theorem \ref{thm:PSS_Rubinstein}.
However, since $\phi_t-\sup\phi_t$ equals to $\varphi_t-\sup\varphi_t$ for each $t$, the MIS coming from a family $\{\phi_t-\sup\phi_t\}$ coincides with the MIS obtained from a family $\{\varphi_t-\sup\varphi_t\}$, which is equal to the KRF-MIS in Theorem \ref{thm:PSS_Rubinstein}.
Take any $G$-invariant K\"ahler form $\omega$.
As seen in the above argument, we find that $\omega_{\phi_t}$ equals to $(\rho_t^{-1})^{*}\omega_{\tilde{\phi}_t}$ for each $t$.
Let $\psi_t^{'}\in C^\infty(X)$ be the discrepancy function defined by  $\omega_{\tilde{\phi}_t}-\omega=\frac{\sqrt{-1}}{2\pi}\partial\bar{\partial}\psi_t^{'}$ and $\sup \psi_t^{'}=0$.
Since $\omega_{\tilde{\phi}_t}$ converges in $C^\infty$-sense, $\|\psi_t^{'}\|_{C^0}$ is uniformly bounded.
Since
\begin{eqnarray*}
		(\rho_t^{-1})^{*}\omega
	&=&
		(\rho_t^{-1})^{*}\omega_{\tilde{\phi}_t}-\frac{\sqrt{-1}}{2\pi}\partial\bar{\partial}
		(\rho_t^{-1})^{*}\psi_t^{'}
	\\
	&=&
		\omega_{\phi_t}-\frac{\sqrt{-1}}{2\pi}\partial\bar{\partial}
		(\rho_t^{-1})^{*}\psi_t^{'},
\end{eqnarray*}
then
\[
	\psi_t
	=
	(\phi_t-(\rho_t^{-1})^{*}\psi_t^{'})-\sup (\phi_t-(\rho_t^{-1})^{*}\psi_t^{'}).
\]
Since $\|(\rho_t^{-1})^{*}\psi_t^{'}\|_{C^0}$ is also uniformly bounded, the MIS obtained from $\{\psi_t\}$ equals to the MIS obtained from $\{\phi_t-\sup\phi_t\}$. Hence, the proof is completed.
\end{proof}
In order to finish the proof of Theorem \ref{thm:reduction_MIS}, it is sufficient to show
\begin{lemma}\label{lem:step2}
Let $\gamma \in (0,1)$.
The support of the MIS obtained from $\{\psi_t\}$ of
exponent $\gamma$
equals to the support of the MIS obtained from the family of the normalized K\"ahler potentials of $\{(\sigma_t^{-1})^{*}\omega\}_t$ with respect to $\omega$, whose supremum equals to zero, of
exponent $\gamma$.
\end{lemma}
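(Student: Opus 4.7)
The plan is to identify the family $\{\psi_t\}$ as a reparametrization of the family of normalized K\"ahler potentials $\{\tau_s\}$ of $\{(\sigma_s^{-1})^*\omega\}$ (normalized by $\sup\tau_s=0$), and then to use the asymptotic $ds_t/dt\to 1$ to conclude that the subsequential $L^1$-limits, and hence the MIS supports, of the two families coincide.

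First I would establish that $(\rho_t^{-1})^*\omega=(\sigma_{s_t}^{-1})^*\omega$. By construction $\rho_t$ is defined only modulo multiplication by an element of $T_\mathbb{R}$, through the requirement that it induce the translation $x\mapsto x+s_t\beta_{KRS}$ on $N_\mathbb{R}\cong T_\mathbb{C}/T_\mathbb{R}$. Since $\sigma_{s_t}=\exp(s_tv_{KRS})$ induces the same translation, we may write $\rho_t=\sigma_{s_t}\,\kappa_t$ for some $\kappa_t\in T_\mathbb{R}$. Because $\omega$ is $G$-invariant (in particular $T_\mathbb{R}$-invariant) and $T_\mathbb{C}$ is abelian, the two pullbacks of $\omega$ agree, so the normalized K\"ahler potentials coincide: $\psi_t=\tau_{s_t}$ (the $\sup=0$ normalization eliminates any remaining additive constant).

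Next I would invoke the remark following Proposition \ref{prop:convergence} that $ds_t/dt\to 1$ as $t\to\infty$, together with the uniform bound on $ds_t/dt$ coming from (\ref{eq:smoothing}). These imply that $s_t$ is continuous, eventually strictly increasing, and tends to $+\infty$, so $t\mapsto s_t$ restricts to a continuous bijection from some half-line $[T_0,\infty)$ onto $[s_{T_0},\infty)$.

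Finally, I would match subsequential limits. Any $L^1$-convergent subsequence $\psi_{t_j}\to\psi_\infty$ with $t_j\to\infty$ is, via $\psi_t=\tau_{s_t}$, an $L^1$-convergent subsequence $\tau_{s_{t_j}}\to\psi_\infty$ of the $\tau$-family. Conversely, given $\tau_{s_j}\to\psi_\infty$ in $L^1$ with $s_j\to\infty$, the intermediate value theorem applied to $s_t$ supplies $t_j\to\infty$ with $s_{t_j}=s_j$, and hence $\psi_{t_j}\to\psi_\infty$. Therefore the two families share the same set of subsequential $L^1$-limits, so the supports of the associated multiplier ideal subschemes of exponent $\gamma$ coincide. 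The only mildly subtle point is the first step: one must verify that the $T_\mathbb{R}$-ambiguity in the definition of $\rho_t$ is harmless thanks to the $G$-invariance of $\omega$. The rest reduces cleanly to the Tian--Zhu asymptotic $ds_t/dt\to 1$.
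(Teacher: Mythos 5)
Your proof is correct, and it takes a genuinely different route from the paper's. The paper disposes of this lemma in three lines by observing that the proof of Theorem~\ref{thm:cse_face_formula_face} goes through verbatim when $s_t$ is replaced by $t$, since the only property of $s_t$ those estimates actually use is $s_t\to\infty$; it then concludes that the complex singularity exponents, and hence the supports, agree. Your argument operates upstream of Theorem~\ref{thm:cse_face_formula_face}: you first identify $(\rho_t^{-1})^*\omega=(\sigma_{s_t}^{-1})^*\omega$ from the $T_\mathbb{R}$-ambiguity in the definition of $\rho_t$ together with the $T_\mathbb{R}$-invariance of $\omega$, so that the normalized potentials satisfy $\psi_t=\tau_{s_t}$, and then you use the Lipschitz bound from (\ref{eq:smoothing}) and the remark $ds_t/dt\to 1$ following Proposition~\ref{prop:convergence} to show that the two families have exactly the same subsequential $L^1$-limits, hence the same multiplier ideal sheaves. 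This buys several things the paper's proof does not make explicit: it is self-contained (no need to re-inspect the estimates of Theorem~\ref{thm:cse_face_formula_face}), it works directly with a general $G$-invariant $\omega$ as in the statement rather than with $\omega_0$, and it proves the stronger assertion that the limiting potentials themselves coincide, not merely their complex singularity exponents. One simplification worth noting: the eventual strict monotonicity and surjectivity of $t\mapsto s_t$ is only needed for the converse inclusion of subsequential limits, and one can bypass it entirely by observing (as the explicit formula $\psi_t(x)=u_0'(t,x)-u_0(x)$ shows) that $\psi_t$ depends on $t$ only through $s_t$, so the $L^1$-limit along any sequence with $s_{t_j}\to\infty$ is the single function $\log\bigl(\sum_k e^{\langle p^{\max(k)},x\rangle}\bigr)-u_0(x)$; then one only needs $s_t\to\infty$, which is exactly what the paper's proof of Theorem~\ref{thm:cse_face_formula_face} uses.
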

We shall prove the above lemma in the next section.

\section{Complex singularity exponents of multiplier ideal sheaves on toric Fano manifolds}\label{sec:MIS_1ps}
In this section, we shall give a formula to calculate the complex singularity exponent of the limit of $\psi_t$ in Lemma \ref{lem:step2} with respect to each face of the polytope $P^*\subset M_\mathbb{R}$.
Then, we shall give a proof to Lemma \ref{lem:step2} and complete the proof of Theorem \ref{thm:reduction_MIS}.
Furthermore, we shall give a way to determine the support of the KRF-MIS on $X$ which does not admit K\"ahler-Einstein metrics and is contained in $\mathcal{W}_1$.

Firstly, let us recall the complex singularity exponent of plurisubharmonic functions, which is introduced by Demailly and Koll\'ar  in \cite{demailly-kollar01} to describe the singularity of plurisubharmonic functions numerically.
Remark that the definition explained below can be applied to almost plurisubharmonic functions in our case, because an almost plurisubharmonic function we shall consider is written locally as a sum of a plurisubharmonic function and a smooth function which is a potential of a fixed reference K\"ahler form.
Let $X$ be a complex manifold and $\varphi$ be a plurisubharmonic function on $X$. Let $K\subset X$ be a compact subset of $X$. The complex singularity exponent $c_K(\varphi)$ of $\varphi$ on $K$ is defined by
\[
	c_K(\varphi):=\sup
	\{
		c \ge 0;
		\exp(-c\varphi) 
		\,\,
		\mbox{is $L^1$ on a neighborhood of }
		K
	\}.
\]
If $\varphi \equiv -\infty$ near some connected component of $K$, we define $c_K(\varphi):=0$.
The complex singular exponent $c_K(\varphi)$ depends only on the behavior of $\varphi$ near its $-\infty$ poles.
From its definition,  $c_{\{p\}}(\varphi)$ is strictly less than some positive constant $\gamma$ if and only if the local section $1_{U_p}$ of $\mathcal{O}_X(U_p)$ is not contained in $\Gamma(U_p, \mathcal{I}(\gamma \varphi))$ for any open neighborhood $U_p$ at $p$, i.e., $p$ is contained in the support of the subscheme cut out by $\mathcal{I}(\gamma\varphi)$.
That is to say, the support of the MIS of
exponent $\gamma$ with respect to $\varphi$ is equal to
\[
	\{p \in X \mid c_{\{p\}}(\varphi) <\gamma\}.
\]

From now on, let $X$ be a toric Fano manifold whose K\"ahler class equals to $c_1(X)$.
Let $P\subset N_\mathbb{R}$ be the Fano polytope of $X$ and $P^*$ be the dual polytope which is the image of the moment map.
More precisely, $P^*$ is defined by
\[
	P^*=\{y\in M_\mathbb{R} \mid \langle y, q^{(i)} \rangle \le 1
	\,\,\,\mbox{for all vertices } q^{(i)} \,\, \mbox{of } P\}.
\] 
Let $\rho_t$ be a holomorphic transformation corresponding to change from $\omega_t$ to $\omega_{\tilde{\phi}_t}$, i.e.,
$\rho_t$ induces the shift on $N_\mathbb{R}$ defined by $x \mapsto x +s_t\beta_{KRS}$ for each $t$ as in the previous section.
Let $\omega_0$ be the standard K\"ahler form defined by (\ref{eq:canonical_metric}).
Let $\{\psi_t\}_t$ be the sequence of K\"ahler potentials of $\{(\rho^{-1}_t)^{*}\omega_0\}_t$ satisfying
\[
	(\rho^{-1}_t)^{*}\omega_0=\omega_0 +\frac{\sqrt{-1}}{2\pi}\partial\bar{\partial}
	\psi_t,
	\,\,\,
	\sup \psi_t=0
\]
as in the previous section.
Let $\psi_\infty$ be the almost plurisubharmonic function which is  the limit of $\{\psi_t\}_t$ in $L^1$-topology.

For a point $y\in M_\mathbb{R}$ we denote the complex singularity exponent of $\psi_\infty$ on $\mu^{-1}(y)$ by $c_{\{y\}}(\psi_\infty)$ where $\mu:X\to M_\mathbb{R}$ is the moment map with respect to $\omega_0$.
This notation makes sense.
In fact, $\mu^{-1}(y)$ is contained in the support of the MIS of
exponent $\gamma$ with respect to $\psi_\infty$ if and only if $c_{\{y\}}(\psi_\infty) <\gamma$, because the MIS on a toric manifold is $T_\mathbb{R}$-invariant and
\begin{equation}\label{eq:Borel-Lebesgue}
	c_{\{y\}}(\psi_\infty)=\inf_{p\in \mu^{-1}(y)}c_{\{p\}}(\psi_\infty).
\end{equation}
As for (\ref{eq:Borel-Lebesgue}), it is easy to check as follows.
It is trivial that $c_{\{y\}}(\psi_\infty)\le\inf_{p\in \mu^{-1}(y)}c_{\{p\}}(\psi_\infty)$ from the definition.
For any $c < \inf_{p\in \mu^{-1}(y)}c_{\{p\}}(\psi_\infty)$, there is an open covering $\cup_{p\in \mu^{-1}(y)}U_p$ of $\mu^{-1}(y)$ such that $U_p$ is an open neighborhood at $p$ and $e^{-c\psi_\infty}$ is integrable over $U_p$.
Since $\mu^{-1}(y)$ is compact, we find that $e^{-c\psi_\infty}$ is integrable over $\cup_{p\in \mu^{-1}(y)}U_p$, i.e., $c\le c_{\{y\}}(\psi_\infty)$.
Hence (\ref{eq:Borel-Lebesgue}) is proved.
For each face $\delta^*$ of $P^*$, let us calculate $c_{\{y\}}(\psi_\infty)$ where $y$ is a point in the relative interior of $\delta^*$.
In order to do it, we shall choose a reference point in the interior of $\delta^*$ as follows.
Let $\delta^*$ be an $(n-l-1)$-dimensional face of $P^*$. 
Let $\tilde{\mu}$ be the $G$-equivariant moment map from $N_\mathbb{R}$ to $M_\mathbb{R}$ with respect to $\omega_0$ defined by
\[
	\tilde{\mu}(x):=
	\biggl(
		\frac{\partial u_0}{\partial x_1}(x), \ldots, \frac{\partial u_0}{\partial x_n}(x)
	\biggr),
\]
where $u_0$ is defined by (\ref{eq:canonical_metric}).
Remark that the image of $\tilde{\mu}$ equals to the interior of  $P^*$.
From the duality between $P$ and $P^*$, for $\delta^*$ there is a unique $l$-dimensional face $\delta$ of $P$.
From the definition of $P$, $\delta$ is a simplex.
Let $\{q^{(i)}\}_{i=1,\ldots, l+1}$ be the set of vertices of $\delta$.
For $a_i \in \mathbb{R}_{>0}$ satisfying $\sum_{i=1}^{l+1}a_i=1$, we put $x^{(a)}:=a_1q^{(1)}+\cdots +a_{l+1}q^{(l+1)}$.
Obviously $x^{(a)}$ is contained in the relative  interior of $\delta$. Then
\begin{eqnarray}
	\nonumber
		\frac{\partial u_0}{\partial x_j}(sx^{(a)})
	&=&
		\frac{\partial}{\partial x_j}
		\biggl|_{x=sx^{(a)}}
		\log \biggl(
		\sum_{i=1}^{m} e^{\langle p^{(i)}, x \rangle}
		\biggr)
	\\
	\nonumber
	&=&	
		\frac{1}
		{
			\sum_{i=1}^{m} e^{\langle p^{(i)}, sx^{(a)} \rangle}
		}
		 \biggl\{
			\sum_{i=1}^{m} 
			p^{(i)}_j
			e^{\langle p^{(i)}, sx^{(a)} \rangle}
		\biggr\}
	\\
	\nonumber
	&=&
		\frac{1}
		{
			\biggl(
			\sum_{i_\alpha \in A} e^{(l+1)s}
			\biggr)
			+
			o(e^{(l+1)s})
		}
		 \biggl\{
		 	e^{(l+1)s}
			\bigl(
				\sum_{i_\alpha\in A} 
				p^{(i_\alpha)}_j
			\bigr)
		+
		o(e^{(l+1)s})
		\biggr\}
	\\
	\label{eq:limit_inner}
	&\to&
		\frac{\sum_{i_\alpha\in A}p^{(i_\alpha)}_j }{\sharp A}	
\end{eqnarray}
as $s\to \infty$, where $A$ is a subset of $\{1,\ldots, m\}$ such that $i_\alpha \in A$ if and only if $p^{(i_\alpha)}$ is contained in $\cap_{i=1}^{l+1}H_i$, where
$H_i:=\{y\in M_\mathbb{R}\mid \langle y, q^{(i)} \rangle =1\}$.
In the above $f(s)\in o(e^{cs})$ means $\lim_{s\to \infty}f(s)e^{-cs}=0$ and $\sharp A$ denotes the number of integers in $A$. 
The equation (\ref{eq:limit_inner}) means that the point 
\begin{equation}\label{eq:define_point}
	p^{(\delta^*)}:=\lim_{s\to \infty}\tilde{\mu}(sx^{(a)})
\end{equation}
is independent of the choice of a vector $a$ and is contained in the relative interior of the face $\delta^*$. 
In fact, $\{p^{(i_\alpha)}\}_{i_\alpha\in A}$ is the set of all integral points on $\delta^*$ and $p^{(\delta^*)}$ is the average of them.
So, in order to determine whether $\mu^{-1}(\delta^*)$ is contained in the MIS of
exponent $\gamma$ or not, it is sufficient to determine whether $c_{\{p^{(\delta^*)}\}}(\psi_\infty)$ is strictly smaller than $\gamma$ or not.
In fact, the $T_{\mathbb{C}}$-invariance of the MIS implies that if $p^{(\delta^*)}$ is contained in the MIS then $\delta^*$ is also contained in it.

Next, we shall give a formula to calculate $c_{\{p^{(\delta^*)}\}}(\psi_\infty)$ for each face $\delta^*$ of $P^*$.
Let $\{p^{(j_k)} \mid 1 \le j_k \le m,\,\, j_{k} < j_{k+1} \}$ be the subset of all integral points of $P^*$ satisfying
\begin{equation}\label{eq:p_max}
	\langle 
	p^{(j_k)}, -\beta_{KRS}\rangle=\max_{i=1,\ldots, m}\langle p^{(i)}, 
	-\beta_{KRS}
	\rangle.
\end{equation}
In order to distinguish such $p^{(j_k)}$ from the other integral points of $P^*$, we denote it by $p^{\max(k)}$.
Let $u^{'}_0(t, x)$ be a convex function on $N_\mathbb{R}$ defined by
\[
	u^{'}_0(t,x):=
	\log\biggl(
		\sum_{i=1}^{m}e^{\langle p^{(i)}, x-s_t\beta_{KRS} \rangle}
	\biggr)
	-
	s_t \max_{i=1,\ldots, m}\langle p^{(i)}, -\beta_{KRS} \rangle.
\]
Then, $(\rho_t^{-1})^*\omega_0=\frac{\sqrt{-1}}{2\pi}\partial\bar{\partial}u^{'}_0(t,x)$.
We find
\begin{eqnarray}
	\nonumber
		u^{'}_0(t,x)-u_0(x)
	&=&
		\log 
		\biggl(
		\frac{\sum_{i=1}^m e^{\langle p^{(i)}, x\rangle +
		s_t(\langle p^{(i)}, -\beta_{KRS}\rangle 
		-\max_{j}\langle p^{(j)}, -\beta_{KRS} \rangle)}}
		{\sum_{i=1}^m e^{\langle p^{(i)}, x\rangle} }
		\biggr)
	\\
	\label{eq:potential_less_than_zero}
	&\le&
	0
\end{eqnarray}
for all $x \in N_{\mathbb{R}}$ and all $t\in \mathbb{R}_{\ge 0}$, and on the other hand we also find 
\begin{eqnarray}
	\nonumber
	&&
		u^{'}_0(t, -s\beta_{KRS})-u_0(-s\beta_{KRS})
	\\
	\nonumber
	&=&
		\log
		\biggl(
			\frac{\sum_{i=1}^{m}
				e^{\langle p^{(i)}, -s\beta_{KRS}-s_t\beta_{KRS}\rangle
			-s_t \max(\langle p^{(i)}, -\beta_{KRS} \rangle)}}
			{\sum_{i=1}^{m}e^{\langle p^{(i)}, -s\beta_{KRS}\rangle}}
		\biggr)
	\\
	\nonumber
	&\ge&
		\log
		\biggl(
			\frac{\sharp \{p^{\max(k)}\} 
				\cdot e^{s\max(\langle p^{(i)}, -\beta_{KRS} \rangle)}}
			{\sum_{i=1}^{m}e^{\langle p^{(i)}, -s\beta_{KRS}\rangle}}
		\biggr)
	\\
	\label{eq:sup=0_1}
	&\to&
		0
\end{eqnarray}
as $s\to \infty$.
From (\ref{eq:potential_less_than_zero}) and (\ref{eq:sup=0_1}), we find
\begin{equation}
	\label{eq:sup=0_2}
		\lim_{s\to \infty}(u^{'}_0(t, -s\beta_{KRS})-u_0(-s\beta_{KRS}))=0.
\end{equation}
From (\ref{eq:potential_less_than_zero}) and (\ref{eq:sup=0_2}) we find  $\sup_{x\in N_\mathbb{R}} (u^{'}_0(t,x)-u_0(x))=0$, that is to say, $\psi_t(x)=u^{'}_0(t,x)-u_0(x)$.
\begin{theorem}\label{thm:cse_face_formula_face}
Let $\delta^*$ be an $(n-l-1)$-dimensional face of $P^*$. 
Let $\delta$ is the associated $l$-dimensional face of $P$ with $\delta^*$.
Then, we have the following two possibilities;
\begin{enumerate}
	\item[(i)]
If for any $x\in\delta$ there is an integral point $p^{\max(k) }_x$ of $P^*$ defined by (\ref{eq:p_max}), which might depend on $x$, such that $\langle p^{\max(k)}_x, x \rangle \ge 0$, then $c_{\{p^{(\delta^*)}\}}(\psi_\infty) \ge 1$.
In particular $\mu^{-1}(\delta^*)$ is not contained in the support of the $G^{\mathbb{C}}$-invariant MIS obtained from $\{\psi_t\}_t$ of
exponent $\gamma$ for any $\gamma<1$.
	\item[(ii)]
Suppose that there is a point $x \in \delta$ such that 
\begin{equation}\label{eq:max_k0_q}
	\langle p^{\max(k)}, x \rangle <0 \,\,\, \mbox{for any } k.
\end{equation}
Let $p^{\max(k_0)}$ be a vertex of $P^*$ and $x^{(0)}$  be a point in $\delta$ such that
\begin{equation}\label{eq:p_q_max_0}
	\langle p^{\max(k_0)}, x^{(0)} \rangle
	=
	\min_{x}\max_{k} \langle p^{\max(k)}, x \rangle
\end{equation}
where $x$ runs over 
\[
	\{x\in \delta \mid x \,\, \mathrm{satisfies}\,\, (\ref{eq:max_k0_q})\}.
\]
Then, we have
\[
	c_{\{p^{(\delta^*)}\}}(\psi_\infty)
	=
	\frac{1}
	{1  -  \langle p^{\max(k_0)}, x^{(0)} \rangle}<1.
\]
In particular $\mu^{-1}(\delta^*)$ is contained in the support of the $G^{\mathbb{C}}$-invariant MIS obtained from $\{\psi_t\}_t$ of
exponent $\gamma$ for any $\gamma \in (c_{\{p^{(\delta^*)}\}}(\psi_\infty), 1)$.
\end{enumerate}
\end{theorem}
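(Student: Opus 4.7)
The plan is to identify $\psi_\infty$ explicitly on the open $T_{\mathbb{C}}$-orbit, reduce the integrability test for $e^{-\gamma\psi_\infty}\omega_0^n$ near $\mu^{-1}(p^{(\delta^*)})$ to a Laplace-type real integral on the cone generated by $q^{(1)},\ldots,q^{(l+1)}$, and read off the critical exponent by optimizing a linear functional. First I take the pointwise limit of $\psi_t(x)=u'_0(t,x)-u_0(x)$ as $t\to\infty$. Since $s_t\to\infty$, the formula for $u'_0$ may be rewritten as
$$u'_0(t,x)=\log\sum_i e^{\langle p^{(i)},x\rangle}\exp\bigl(s_t(\langle p^{(i)},-\beta_{KRS}\rangle-\max_j\langle p^{(j)},-\beta_{KRS}\rangle)\bigr),$$
and every summand with $p^{(i)}\ne p^{\max(k)}$ decays to zero, giving
$$\psi_\infty(x)=\log\Bigl(\sum_k e^{\langle p^{\max(k)},x\rangle}\Bigr)-u_0(x),$$
which is continuous and $\le 0$ on $N_{\mathbb{R}}$.

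Next I set up the singularity exponent as the integrability of $e^{-\gamma\psi_\infty}\omega_0^n$ near $\mu^{-1}(p^{(\delta^*)})$. On the open orbit, using toric log-coordinates and (\ref{eq:estimate_volume_form}), $\omega_0^n$ is comparable to $e^{-u_0(x)}dx\,d\theta$, so up to bounded factors the relevant divergence is that of
$$\int e^{(\gamma-1)u_0(x)}\Bigl(\sum_k e^{\langle p^{\max(k)},x\rangle}\Bigr)^{-\gamma}dx.$$
A neighborhood of $\mu^{-1}(p^{(\delta^*)})$ in $N_{\mathbb{R}}$ is parametrized asymptotically by $x=\sum_{j=1}^{l+1}c_j q^{(j)}+\eta$ with $c_j\ge 0$ large and $\eta$ bounded transverse to $\mathrm{span}(q^{(1)},\ldots,q^{(l+1)})$. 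Because $\langle p^{(i_\alpha)},q^{(j)}\rangle=1$ for every $i_\alpha\in A$ while $\langle p^{(i)},q^{(j)}\rangle<1$ for some $j$ whenever $p^{(i)}\notin A$, applying $\log\sum e^{f_i}=\max_i f_i+O(1)$ gives
$$u_0(x)=S+O(1),\qquad \log\sum_k e^{\langle p^{\max(k)},x\rangle}=S\,M(a)+O(1),$$
where $S=\sum_j c_j$, $a_j=c_j/S$, $x^{(a)}=\sum_j a_j q^{(j)}\in\delta$, $M(a)=\max_k\langle p^{\max(k)},x^{(a)}\rangle$, and the errors are uniform in $\eta$ and for $a$ in any compact subset of the open simplex.

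The change of variables $(c_1,\ldots,c_{l+1})\mapsto(S,a)$ supplies the Jacobian $S^l$, so the singular part of the integral reduces, up to uniformly bounded factors, to
$$\int_{\Delta^l}\int_R^\infty S^l\,e^{-S(1-\gamma+\gamma M(a))}\,dS\,da.$$
The inner integral converges iff $1-\gamma+\gamma M(a)>0$, and by continuity of $M$ the whole integral converges iff this holds for every $a$. In case (i), $M(a)\ge 0$ for all $a$, so $1-\gamma+\gamma M(a)\ge 1-\gamma>0$ whenever $\gamma<1$; hence $c_{\{p^{(\delta^*)}\}}(\psi_\infty)\ge 1$. In case (ii), $m:=\min_a M(a)<0$ is attained at some $a^{(0)}$ (equivalently $x^{(0)}\in\delta$) by a vertex $p^{\max(k_0)}$, and convergence forces $\gamma<1/(1-m)$, giving $c_{\{p^{(\delta^*)}\}}(\psi_\infty)=1/(1-\langle p^{\max(k_0)},x^{(0)}\rangle)$.

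The main technical obstacle is ensuring that the $O(1)$ errors in the log-sum-exp expansions are genuinely uniform on the integration region so they do not shift the critical exponent, and that regions of $N_{\mathbb{R}}$ outside the cone over $\delta$ correspond under $\mu$ to faces of $P^*$ other than $\delta^*$ and therefore contribute only a finite amount to the integral near $\mu^{-1}(p^{(\delta^*)})$.
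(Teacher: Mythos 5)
Your approach is genuinely different from the paper's and is, modulo some gaps you partially flag yourself, a cleaner route to the same answer. The paper never identifies $\psi_\infty$ explicitly; it works with the finite-$t$ potentials $\psi_t = u_0'(t,\cdot)-u_0$, establishes a uniform-in-$t$ lower bound $\int_{U}e^{-c\psi_t}\omega_0^n \le C$ for $c$ below the threshold to get $c_K(\psi_\infty)\ge\cdot$, and for the reverse inequality shows $\int_{U}e^{-c\psi_t}\omega_0^n\to\infty$ on a shrinking tube $\tilde U_\varepsilon$ around $\{sx^{(0)}\}$ and then invokes the Demailly--Koll\'ar semicontinuity theorem to conclude $c_K(\psi_\infty)\le\cdot$. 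Crucially the paper's upper-bound argument requires the time window $s\in[0,s_tT']$ coming from comparing the linear growth rates $\langle p^{(i)}-p^{\max(k_0)},\beta_{KRS}\rangle$ against $\langle p^{(i)}-p^{\max(k_0)},x^{(0)}\rangle$; you sidestep all of that by taking the pointwise (monotone) limit $\psi_\infty=\log\bigl(\sum_k e^{\langle p^{\max(k)},x\rangle}\bigr)-u_0$ and computing $c_K(\psi_\infty)$ directly via a Laplace/log-sum-exp expansion and a cone-to-simplex change of variables. This avoids the semicontinuity machinery entirely and makes the answer $1/(1-\min_{a}\max_k\langle p^{\max(k)},x^{(a)}\rangle)$ essentially transparent.

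That said, a few points need tightening for the argument to be complete. (1) You should justify that the $L^1$-limit $\psi_\infty$ agrees with the pointwise limit you compute; this follows because each $\psi_t(x)=\log\bigl(\sum_i e^{\langle p^{(i)},x\rangle}e^{s_t(\langle p^{(i)},-\beta_{KRS}\rangle-M)}\bigr)-u_0(x)$ is nonincreasing in $s_t$ and bounded above by $0$, so dominated/monotone convergence applies once one knows $\psi_\infty\in L^1_{\mathrm{loc}}$. (2) Your sentence ``the whole integral converges iff this holds for every $a$'' quietly assumes the $O(1)$ errors are uniform over all of $\Delta^l$, which you yourself observe is only guaranteed on compact subsets of the open simplex; but in fact the full strength is not needed: for the lower bound (convergence) one only needs the \emph{one-sided} uniform bounds $u_0(x)\ge S-C$ and $\max_k\langle p^{\max(k)},x\rangle\ge Sm-C$ (valid for all $a$ in the closed simplex by linearity), and for the upper bound (divergence) one works locally near a minimizer $a^{(0)}$, exactly as the paper does with $\tilde U_\varepsilon$. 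Phrased as an iff over $\Delta^l$ with uniform Laplace asymptotics, the statement would be wrong in general; phrased with one-sided bounds plus a local blow-up at $a^{(0)}$, it is correct. (3) The claim that the complement of the cone over $\delta$ ``contributes only a finite amount'' to the integral near $\mu^{-1}(p^{(\delta^*)})$ is the statement that $\tilde\mu^{-1}$ of a small neighborhood of $p^{(\delta^*)}$ is eventually contained in any tube around the recession cone $\mathrm{cone}(q^{(1)},\dots,q^{(l+1)})$; this is true (it is the normal cone to $P^*$ at $p^{(\delta^*)}$), but it deserves a sentence of justification rather than being relegated to a final caveat. With those repairs your argument is sound and gives the paper's formula by a more direct route.
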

\begin{proof}
Firstly, we shall show the case (i).
For any $x\in \delta$, the assumption implies
\begin{eqnarray}
	\nonumber
		u^{'}_0(t, sx)
	&=&
		\log\biggl(
			\sum_{i=1}^m e^{\langle p^{(i)}, sx -s_t\beta_{KRS} \rangle}
		\biggr)
		-
		s_t \langle p^{\max(k)}_x, -\beta_{KRS} \rangle
	\\
	\label{eq:estimate_u'0-2}
	&\ge&
		s\langle p^{\max(k)}_x, x \rangle
		\ge 0
\end{eqnarray}
for all $s\ge 0$.
Let
\[
	\tilde{U}:=
	\{
		s_1x+s_2\eta \in N_\mathbb{R} \mid
		x \in \delta,
		\,\,
		\eta \in N_\mathbb{R}, \, |\eta|=1,
		\,\,
		s_i \in \mathbb{R}_{\ge 0}, \, |s_2|<1
	\}.
\]
Let $U_{p^{(\delta^*)}} \subset X$ be the interior of $\mu^{-1}(\overline{\tilde{\mu}(\tilde{U})})$, where $\overline{\tilde{\mu}(\tilde{U})}$ denotes the closure of $\tilde{\mu}(\tilde{U})$ and
 $\mu:X\to M_\mathbb{R}$ and $\tilde{\mu}:N_\mathbb{R} \to M_\mathbb{R}$ are the moment maps with respect to $\omega_0$.
Then, from (\ref{eq:limit_inner}), we find that $U_{p^{(\delta^*)}}$ is an open neighborhood around $\mu^{-1}(p^{(\delta^*)})$.
Then, for $c\ge 0$, (\ref{eq:estimate_u'0-2}) implies
\begin{eqnarray}
	\label{eq:estimate2-1}
		\int_{U_{p^{(\delta^*)}}} e^{-c\psi_t}\omega_0^n
	&\le&
		C\int_{\tilde{U}}
		e^{-c\psi_t-u_0}dx_1\cdots dx_n
	\\
	\label{eq:estimate2-3}
	&=&
		C\int_{\tilde{U}}
		e^{-cu^{'}_0(t,x)+(-1+c)u_0(x)}dx_1\cdots dx_n	
	\\
	\nonumber
	&\le&
		C\int_{\tilde{U}}
		e^{(-1+c)u_0(x)}dx_1\cdots dx_n
	\\
	\label{eq:estimate2-2}
	&\le&				
		C
		\biggl(\int_{s=0}^{\infty}
		e^{(-1+c)s}ds
		\biggr)^{l+1}.
\end{eqnarray}
In (\ref{eq:estimate2-1}), we use the inequality (\ref{eq:estimate_volume_form}).
From (\ref{eq:estimate2-2}), we find that $\int_{U_{p^{(\delta^*)}}} e^{-c\psi_t}\omega_0^n$ is bounded if $0 \le c<1$.
Hence we find that $c_{\{p^{(\delta^*)}\}}(\psi_\infty)\ge 1$.

Next we shall prove the case (ii).
Before proving it, remark that the existence of the points $p^{\max(k_0)}$ and $x^{(0)}$ in (\ref{eq:p_q_max_0}) is assured.
In fact a function $x \mapsto \max_k\langle p^{\max(k)}, x \rangle$ is continuous on a compact set $\{x\in \delta \mid \langle p^{\max(k)}, x \rangle \le 0 \,\,\, \mbox{for all } k\}$  and it is
\[
	\left\{\begin{array}{ccc}
	\mbox{equal to zero} & \mbox{if }& \langle p^{\max(k)}, x \rangle =0
	\,\,\,
	\mbox{for some } k
	\\
	\mbox{strictly less than zero} & \mbox{if }& \langle p^{\max(k)}, x \rangle <0
	\,\,\,
	\mbox{for all } k.
	\end{array}\right.
\]
These mean that the minimal point $x^{(0)}$ of the above function is contained in $\{x\in \delta \mid \langle p^{\max(k)}, x \rangle < 0 \,\,\, \mbox{for all } k\}$.
Let us begin to prove (ii).
The definition (\ref{eq:p_q_max_0}) implies that for all $x\in \delta$
\begin{equation}\label{eq:u'0_estimate}
		u^{'}_0(t, sx)
	\ge
		s\max_{k}\langle p^{\max(k)}, x \rangle
	\ge
		s\langle p^{\max(k_0)}, x^{(0)} \rangle.		
\end{equation}
Since for any $x\in \delta$ there is a vertex $p$ of $P^*$ such that $\langle x, p \rangle =1$, then we have
\begin{equation}\label{eq:u0_estimate}
	u_0(sx) \ge
	s.
\end{equation}
As (\ref{eq:estimate2-3}), for $0\le c<1$, (\ref{eq:u'0_estimate}) and (\ref{eq:u0_estimate}) imply
\begin{eqnarray}
	\nonumber
		\int_{U_{p^{(\delta^*)}}} e^{-c\psi_t}\omega_0^n
	&\le&
		C\int_{\tilde{U}}
		e^{-cu^{'}_0(t,x)+(-1+c)u_0(x)}dx_1\cdots dx_n	
	\\
	\label{eq:estimate2-4}
		&\le&
		C
		\biggl(
		\int_{s=0}^{\infty}
		e^{s\{
			-1+c(1-\langle p^{\max(k_0)}, x^{(0)} \rangle)
		\}}ds
		\biggr)^{l+1}.
\end{eqnarray}
From (\ref{eq:estimate2-4}) we find that 
\begin{equation}\label{eq:cse_bigger-2}
	c_{\{p^{(\delta^*)}\}}(\psi_\infty)
	\ge
	\frac{1}{1-\langle p^{\max(k_0)}, x^{(0)} \rangle}.
\end{equation}

Next we shall prove $c_{\{p^{(\delta^*)}\}}(\psi_\infty) \le\frac{1}{1-\langle p^{\max(k_0)}, x^{(0)} \rangle}$.
For each integral point $p^{(i)}$ of $P^*$, let
\begin{eqnarray}
	\nonumber
		A_i(s)
	&:=&
		\langle
			p^{(i)}, sx^{(0)} -s_t\beta_{KRS}
		\rangle
		-s_t\langle
			p^{\max(k_0)}, -\beta_{KRS}
		\rangle.
\end{eqnarray}
Then by (\ref{eq:p_max}), for all $i=1, \ldots, m$, we have
\begin{eqnarray}
	\nonumber
		A_i(s)
	&=&
		s(\langle
			p^{\max(k_0)}, x^{(0)}
		\rangle
		+\langle
			p^{(i)} - p^{\max(k_0)}, x^{(0)}
		\rangle)
	\\
	\label{eq:A_i(s)_1}
	&&
		-s_t(
			\langle p^{(i)}, \beta{_{KRS}}  \rangle
			-
			\langle p^{\max(k_0)}, \beta_{KRS}  \rangle	
		)
	\\
	\nonumber
	&\le&
		s(\langle
			p^{\max(k_0)}, x^{(0)}
		\rangle
		+\langle
			p^{(i)} - p^{\max(k_0)}, x^{(0)}
		\rangle).
\end{eqnarray}
If $\langle p^{(i)}-p^{\max(k_0)}, x^{(0)} \rangle \le 0$, then we have
\[
	A_i(s)\le s\langle p^{\max(k_0)}, x^{(0)} \rangle
	\,\,\,
	\mbox{for all }
	s\ge 0.
\]
If $\langle p^{(i)}-p^{\max(k_0)}, x^{(0)} \rangle > 0$, then (\ref{eq:p_q_max_0}) implies that $p^{(i)} \notin\{p^{\max(k)}\}_k$.
Otherwise it contradicts to that $\langle p^{\max(k_0)}, x^{(0)} \rangle$ is a maximum among $\{\langle p^{\max(k)}, x^{(0)}\rangle\}_k$.
This and (\ref{eq:p_max}) imply that $\langle p^{(i)}, \beta_{KRS} \rangle - \langle p^{\max(k_0)}, \beta_{KRS} \rangle$ is \textit{strictly} bigger than zero.
From (\ref{eq:A_i(s)_1}) we find that
\[
	A_i(s)\le s\langle p^{\max(k_0)}, x^{(0)} \rangle
	\,\,\,
	\mbox{for all }
	s \in [0, s_tT^{'}_{i}],
\]
where
\[
	T^{'}_{i}:=\frac{\langle (p^{(i)}-p^{\max(k_0)}), \beta_{KRS} \rangle}
	{\langle (p^{(i)}-p^{\max(k_0)}), x^{(0)} \rangle}.
\]
Let $T^{'}:=\min \{T^{'}_i \mid i=1,\ldots, m\} >0.$
This constant depends only on $\beta_{KRS}$ and independent of $s$ and $i$.
Hence , for all $i=1,\ldots, m$
\begin{equation}\label{eq:estimate_A_2-3}
	A_i(s)\le
	s\langle p^{(\max(k_0))}, x^{(0)} \rangle
	\,\,\,
	\mbox{for all }
	 s \in [0, s_tT^{'}].
\end{equation}
Let $\tilde{U}_{\varepsilon}:=\{x\in N_\mathbb{R} \mid |x-sx^{(0)}|<\varepsilon, \,\, s\ge \frac{1}{\varepsilon} \}$.
For any open neighborhood $U^{'}$ of $\mu^{-1}(p^{(\delta^*)})$, there is a sufficiently small constant $\varepsilon>0$ such that $\tilde{\mu}(\tilde{U}_\varepsilon) \subset \mu(U')$.
In fact, for the point $x^{(0)}$ in (\ref{eq:p_q_max_0}) we have
\begin{eqnarray}
	\nonumber
		\frac{\partial u_0}{\partial x_j}(sx^{(0)}+\eta)
	&=&
		\frac{\partial}{\partial x_j}
		\biggl|_{x=sx^{(0)}+\eta}
		\log \biggl(
		\sum_{i=1}^{m} e^{\langle p^{(i)}, x \rangle}
		\biggr)
	\\
	\nonumber
	&=&	
		\frac{1}
		{
			\sum_{i=1}^{m} e^{\langle p^{(i)}, sx^{(0)+\eta} \rangle}
		}
		 \biggl\{
			\sum_{i=1}^{m} 
			p^{(i)}_j
			e^{\langle p^{(i)}, sx^{(0)+\eta} \rangle}
		\biggr\}
	\\
	\nonumber
	&\to&
		\frac{
			\sum_{i_\alpha\in A} 
			e^{\langle p^{(i_\alpha)}, \eta \rangle}
			 p^{(i_\alpha)}_j
		}
		{
			\sum_{i_\alpha \in A}
				e^{\langle p^{(i_\alpha)}, \eta \rangle}
		}
\end{eqnarray}
as $s \to \infty$, where $A$ is the subset of $\{1,\ldots, m\}$ defined by (\ref{eq:limit_inner}).
Since $A$ is independent of $\eta$, there is a positive constant $C$ independent of $\varepsilon$ and $\eta$ such that
\begin{eqnarray}
	&&
	\nonumber
	|\lim_{s\to \infty}\tilde{\mu}(sx^{(0)}+\eta)-p^{(\delta^*)}|^2	\\
	\nonumber
	&=&
	\sum_{j=1}^{n}
	\biggl|
		\frac{
			\sum_{i_\alpha\in A} 
			e^{\langle p^{(i_\alpha)}, \eta \rangle}
			 p^{(i_\alpha)}_j
		}
		{
			\sum_{k_\alpha \in A}
				e^{\langle p^{(k_\alpha)}, \eta \rangle}
		}
		-
		\frac{\sum_{i_\alpha \in A} p^{(i_\alpha)}_j}{\sharp A}
	\biggr|^2
	\\
	\nonumber
	&\le&
	C
	\sum_{j=1}^n
	\biggl|
		\sum_{i_\alpha \in A}
		\biggl(
			\sum_{k_\alpha \in A}
			(e^{\langle p^{(i_\alpha)}, \eta  \rangle}-
			e^{\langle p^{(k_\alpha)}, \eta \rangle})
		\biggr)
		p^{(i_\alpha)}_j
	\biggr|^2
	\\
	\label{eq:asymptotic_discrepancy}
	&\le&
	C \varepsilon
\end{eqnarray}
for any sufficiently small $\varepsilon>0$ and any $\eta \in N_\mathbb{R}$ with  $|\eta| <\varepsilon$.
From (\ref{eq:asymptotic_discrepancy}), we find that there is a positive constant $C$ independent of $s$ and $\eta$ such that
\begin{equation}
\label{eq:potential_discrepancy_s}
	|\tilde{\mu}(sx^{(0)}+\eta)-\tilde{\mu}(sx^{(0)})|
	\le 
	C \varepsilon
\end{equation}
for all $s\in \mathbb{R}_{\ge 0}$ and any $\eta$ with $|\eta| < \varepsilon$.
This implies that $\tilde{\mu}(\tilde{U}_\varepsilon) \subset \mu(U')$ for any sufficiently small $\varepsilon$.
Remark that $\tilde{\mu}(\tilde{U}_{\varepsilon})$ is not necessarily a neighborhood of $p^{(\delta^*)}$.
(For instance, when $\delta^*$ is a $0$-dimensional face, $\tilde{\mu}(sx^{(0)}+\eta)$ goes to the point $p^{(\delta^*)}$ for any $\eta$, because $\sharp A=1$.)
There is a positive constant $C_\varepsilon$ depending only on $\varepsilon$ such that, for any $x\in \tilde{U}_{\varepsilon}$ with $|x-sx^{(0)}|<\varepsilon$,
\begin{equation}\label{eq:u'_bdd_by_logA-i}
	u^{'}_0(t,x)
	\le
	u^{'}_0(t,sx^{(0)}) +C_\varepsilon
	=
	\log\biggl(
	\sum_{i}^{m} \exp A_i(s)
	\biggr) +C_\varepsilon.
\end{equation}
On the other hand, 
\begin{equation}\label{eq:u_bdd_by_s_loglatticenumber}
	u_0(sx)\le s+\log m,
\end{equation}
where $x\in \delta$ and $m$ is the number of lattice points in $P^*$.
From (\ref{eq:estimate_A_2-3}), (\ref{eq:u'_bdd_by_logA-i}) and (\ref{eq:u_bdd_by_s_loglatticenumber}) we find that for $0 \le c < 1$ and a fixed sufficiently small $\varepsilon$, 
\begin{eqnarray}
	\nonumber
		\int_{U^{'}} e^{-c\psi_t} \omega_0^n
	&\ge&
		C\int_{\mu^{-1}(\tilde{\mu}(\tilde{U}_\varepsilon))} 
		e^{-c\psi_t} \omega_0^n
	\\
	\nonumber
	&\ge&
		C\int_{\tilde{U_\varepsilon}}
		e^{-cu^{'}_0(t, x)+(-1+c)u_0}dx_1\cdots  dx_n
	\\
	\nonumber
	&\ge&
		C\int_{\frac{1}{\varepsilon}}^{s_tT^{'}}
		e^{-c \max_i A_i(s)+(-1+c)s}ds
	\\
	\nonumber
	&\ge&
		C\int_{\frac{1}{\varepsilon}}^{s_tT^{'}}
		e^{-cs \langle p^{\max(k_0)}, x^{(0)} \rangle +(-1+c)s}ds
	\\
	\label{eq:estimate2-5}
	&=&
		C\int_{\frac{1}{\varepsilon}}^{s_tT^{'}}
		e^{s\{c(1- \langle p^{\max(k_0)}, x^{(0)} \rangle) -1\}}ds.
\end{eqnarray}
If $c \ge \frac{1}{1- \langle p^{\max(k_0)}, x^{(0)} \rangle}$, the RHS of (\ref{eq:estimate2-5}) goes to $+\infty$ as $t\to \infty$, because $s_t$ goes to $+\infty$.
The definition of the semi-continuity of the complex singularity exponent (\cite{demailly-kollar01}) implies that
\begin{equation}\label{eq:cse_smaller-2}
	c_{\{p^{(\delta^*)}\}}(\psi_\infty) \le
	\frac{1}{1- \langle p^{\max(k_0)}, x^{(0)} \rangle}.
\end{equation}
Hence we get the desired equation from (\ref{eq:cse_bigger-2}) and (\ref{eq:cse_smaller-2}).
The proof is completed.
\end{proof}
\begin{remark}
Theorem \ref{thm:cse_face_formula_face} is kind of local version of Song's formula  \cite{song05} of the $\alpha$-invariant on toric Fano manifolds.
\end{remark}
Then, Lemma \ref{lem:step2} is a corollary of Theorem \ref{thm:cse_face_formula_face}.
\begin{proof}[Proof of Lemma \ref{lem:step2}]
Theorem \ref{thm:cse_face_formula_face} still holds if we assume that $s_t\equiv t$. 
This means that the complex singularity exponent with respect to $\rho_t$ defined in the previous section equals to the one with respect to $\sigma_t$.
Therefore, the MIS obtained from $\{(\rho_t^{-1})^*\omega\}_t$ has the same support of  the MIS obtained from $\{(\sigma_t^{-1})^*\omega\}_t$ for any 
exponent $\gamma<1$.
\end{proof}
Therefore the proof of Theorem \ref{thm:reduction_MIS} is completed.

We shall conclude this section with another Corollary of Theorem \ref{thm:cse_face_formula_face}.
Let $\varepsilon>0$ be a sufficiently small constant.
Theorem \ref{thm:cse_face_formula_face} gives us a way to determine the support of the MIS of 
exponent $\gamma$ from any one-parameter subgroup of $\mbox{Aut}(X)$ for $\gamma \in (1-\varepsilon, 1)$ as follows. 
Here we do not need the assumption that $X$ is contained in $\mathcal{W}_1$ .
To describe the statement, let us introduce some terminologies.
Let ${\sigma_t}$ be a one-parameter subgroup of the holomorphic vector field $v_{\zeta}$ which is associated with a vector $\zeta \in N_\mathbb{R}$, i.e., if $\zeta^{\sharp}$ is the real vector field induced by $\zeta$ then $v_{\zeta}=\frac{1}{2}(\zeta^{\sharp}-\sqrt{-1}(J\zeta^{\sharp}))$ and $\sigma_t=\exp (tv_{\zeta})$.
Let us consider the MIS coming from $\{(\sigma_t^{-1})^*\omega_0\}_t$ as before.
Let $x(-\zeta)\in \partial P$ be a point which is the intersection between $\partial P$ and the half line $\{-s\zeta \in N_\mathbb{R} \mid s\ge0\}$. 
For distinct points $x^{(1)}$ and $x^{(2)}$ on $\partial P$, we define that $x^{(1)} \sim x^{(2)}$ if and only if $x^{(1)}$ and $x^{(2)}$ are contained in a common $(n-1)$-dimensional facet of $P$.
We define the star set of $x(-\zeta)$ by
\[
	st(x(-\zeta)):=
	\{
		x\in \partial P \mid
		x \sim x(-\zeta)
	\}.
\]
From the definition of the star set, $st(x(-\zeta))$ is a union of $(n-1)$-dimensional facets $\{\delta_k\}_{k=1,\ldots, k_\zeta}$ of $P$.
For each $\delta_k$, there corresponds to a hyperplane $\{x \in N_\mathbb{R} \mid H_k(x)=1\}$ in $N_\mathbb{R}$ which contains $\delta_k$.
Then, the star set $st(x(-\zeta))$ divides $N_\mathbb{R}$ into two.
This means that $N_\mathbb{R}$ is divided into $N_\mathbb{R}^{\le}:=\{x \mid H_k(x)\le 1 \,\,\, \mbox{for all } k\}$ and its complement.
Then, by translating $N_\mathbb{R}^{\le}$ along the line $\{-s\zeta \in N_\mathbb{R} \mid s \in \mathbb{R}\}$ so that the origin is contained in its boundary, we define a subspace in $N_\mathbb{R}$ by
\begin{eqnarray}
	\nonumber
		\widetilde{st(x(-\zeta))}
	&:=&
		\{
			x\in N_\mathbb{R}
			\mid
				H_k(x)\le 0 \,\,\, \mbox{for all }k
		\}.
\end{eqnarray}
\begin{corollary}\label{cor:star_cor}
Let $X$ be a toric Fano manifold.
Let ${\sigma_t}$ be a one-parameter subgroup of the holomorphic vector field $v_{\zeta}$ which is associated with a vector $\zeta \in N_\mathbb{R}$.
Suppose $\gamma \in (1-\varepsilon, 1)$ where $\varepsilon$ is a sufficiently small positive constant.
Let $\delta^*$ be an $(n-l-1)$-dimensional face of $P^*$ and $\delta$ be its associated $l$-dimensional face of $\delta$.
Then, $\delta^*$ is contained in the image of  the support of the MIS of
exponent $\gamma$ from $\{(\sigma_t^{-1})^* \omega_0\}_t$ under the moment map $\mu$ if and only if $\delta\cap int(\widetilde{st(x(-\zeta))}) \neq \emptyset$, where $int(\widetilde{st(x(-\zeta))})$ is the interior of $\widetilde{st(x(-\zeta))}$.
\end{corollary}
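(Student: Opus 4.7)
The plan is to deduce Corollary \ref{cor:star_cor} directly from Theorem \ref{thm:cse_face_formula_face} applied with $\zeta$ in place of $\beta_{KRS}$; the proof of that theorem uses only that we have a fixed vector in $N_\mathbb{R}$ and that the shift parameter tends to infinity, so it applies verbatim (with $s_t = t$, exactly as in the proof of Lemma \ref{lem:step2}). The remaining task is to rewrite the condition in case (ii) of the theorem as the combinatorial condition $\delta \cap \text{int}(\widetilde{st(x(-\zeta))}) \neq \emptyset$.

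First I would observe that, since there are only finitely many faces $\delta^* \subset P^*$, the set $\{c_{\{p^{(\delta^*)}\}}(\psi_\infty) : \delta^* \text{ falls into case (ii)}\}$ is a finite subset of $[0,1)$. Taking $\varepsilon>0$ smaller than the distance from this finite set to $1$, I obtain for every $\gamma \in (1-\varepsilon,1)$ the equivalence: $\delta^* \subset \mu(\text{supp MIS})$ iff $c_{\{p^{(\delta^*)}\}}(\psi_\infty) < 1$, iff case (ii) of Theorem \ref{thm:cse_face_formula_face} holds for $\delta$.

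Next I identify the lattice points $\{p^{\max(k)}\}$ with the lattice points of a concrete face of $P^*$. Writing $x(-\zeta) = -s_0\zeta$ with $s_0 > 0$, for every $p \in P^*$ one has $\langle p, -\zeta\rangle \leq 1/s_0$, with equality exactly on the face $F \subset P^*$ polar-dual to the minimal face of $P$ containing $x(-\zeta)$. Hence $\{p^{\max(k)}\}$ is the set of lattice points of $F$. The vertices of $F$ are the vertices $p^{(k)}$ of $P^*$ dual to the facets $\delta_k$ of $P$ belonging to $st(x(-\zeta))$; by the smoothness condition (d), each such $p^{(k)}$ is integral, hence itself lies in $\{p^{\max(j)}\}$. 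A direct computation of the translation used in defining $\widetilde{st}$ (namely translating $N_\mathbb{R}^{\le}$ by $+s_0\zeta$) shows $\text{int}(\widetilde{st(x(-\zeta))}) = \{y \in N_\mathbb{R} : \langle p^{(k)}, y\rangle < 0 \text{ for all vertices } p^{(k)} \text{ of } F\}$.

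Combining the two observations finishes the argument. If $x \in \delta \cap \text{int}(\widetilde{st})$, every integral point $p^{\max(j)} \in F$ is a convex combination of the vertices of $F$, so $\langle p^{\max(j)}, x\rangle < 0$ for all $j$, which is exactly case (ii). Conversely, if case (ii) holds at some $x \in \delta$, then because the vertices of $F$ are themselves integral and hence among the $\{p^{\max(j)}\}$, one has $\langle p^{(k)}, x\rangle < 0$ for each such vertex, i.e.\ $x \in \text{int}(\widetilde{st})$. The main bookkeeping obstacle is the identification of the two descriptions of $F$ — as the maximizing face of the linear functional $\langle \cdot, -\zeta\rangle$ on $P^*$ and as the polar dual of the minimal face of $P$ containing $x(-\zeta)$ — together with the smoothness input (d) needed to guarantee integrality of its vertices.
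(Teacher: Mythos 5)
Your proof is correct and follows the same route as the paper: choose $\varepsilon$ small via finiteness of faces, then apply Theorem \ref{thm:cse_face_formula_face}, and finally identify the condition in case (ii) with $\delta\cap \mathrm{int}(\widetilde{st(x(-\zeta))})\neq\emptyset$ via polar duality. You give the duality step in more detail than the paper (which simply invokes the $P$--$P^*$ duality to identify each facet functional $H_k$ with the dual vertex $p^{\max(k)}$), and in particular you make explicit the small but genuine point that the $\{p^{\max(k)}\}$ in Theorem \ref{thm:cse_face_formula_face} range over \emph{all} integral points of the maximizing face $F$ while the description of $\widetilde{st(x(-\zeta))}$ uses only the vertices of $F$ — equivalent by convexity. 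One minor remark: the integrality of the vertices of $F$ follows already from $P^*$ being a lattice polytope (reflexivity of the Fano polytope), not specifically from the smoothness condition (d); citing (d) here is harmless but slightly off-target.
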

\begin{proof}
From the duality of $P$ and $P^*$, we find that for each $H_k$ there corresponds to $p^{\max(k)}$ defined as (\ref{eq:p_max}) and that $st(x(-\zeta))$ equals to
\[
	\{
		x\in N_\mathbb{R}
		\mid
		\langle
			p^{\max(k)}, x
		\rangle
		=1 \,\,\,
		\mbox{for all }k
	\}.
\]
Hence we find that $int(\widetilde{st(x(-\zeta))})$ equals to
\[
	\{
		x\in N_\mathbb{R}
		\mid
		\langle
			p^{\max(k)}, x
		\rangle
		<0 \,\,\,
		\mbox{for all }k
	\}.
\]
If $\delta\cap int(\widetilde{st(x(-\zeta))}) \neq \emptyset$, then Theorem \ref{thm:cse_face_formula_face} (ii) implies
$c_{(p^{\delta^*})}(\psi_\infty) < 1-\varepsilon(\delta) $ for some $\varepsilon(\delta)>0$ which might depend on $\delta$.
By taking a sufficiently small $\varepsilon$, we get that $c_{(p^{\delta^*})}(\psi_\infty) < 1-\varepsilon$ if $\delta\cap int(\widetilde{st(x(-\zeta))}) \neq \emptyset$, because the number of faces in $P$ is finite.
On the other hand, if $\delta\cap int(\widetilde{st(x(-\zeta))}) = \emptyset$, then Theorem \ref{thm:cse_face_formula_face} (i) implies
$c_{(p^{\delta^*})}(\psi_\infty) \ge 1 \ge 1-\varepsilon$.
This completes the proof.
\end{proof}

\section{Examples}\label{sec:example}
In this section, we shall calculate several examples which are contained in $\mathcal{W}_1$.
Let $v_{KRS}$ be the holomorphic vector field of K\"ahler-Ricci soliton, which is contained in the reductive part $\mathfrak{h}_r(X)$ of the Lie algebra $\mathfrak{h}(X)$ consisting of all holomorphic vector fields on $X$.
Since manifolds are contained in $\mathcal{W}_1$, we can determine the vector $\beta_{KRS}$ in $N_\mathbb{R}$ which induces $v_{KRS}$ by calculating the sign of its Futaki invariant.
Let us recall the definition of Futaki invariant (\cite{futaki83}).
Futaki introduced an integral invariant, which is a Lie character of $\mathfrak{h}(X)$, defined by
\[
	F(v):=\int_X vh_g \omega_g^n.
\]
He proved that $F$ is independent of the choice of $g$.
Let $\theta_{KRS} \in C^{\infty}(X)$ be a function defined by
\[
	\iota_{(v_{KRS})}\omega_{KRS}=
	\frac{\sqrt{-1}}{2\pi}\bar{\partial}
	\theta_{KRS},
	\,\,\,
	\int_X e^{\theta_{KRS}}\omega_{KRS}^n=\int_X \omega_{KRS}^n.
\]
Note that the existence and the uniqueness of $\theta_{v_{KRS}}$ are assured by the Hodge theory, because $\iota_{(v_{KRS})}\omega_{KRS}$ is a $\bar\partial$-closed $(0,1)$-form and there is no harmonic $1$-form on $X$ due to $c_1(X)>0$.
Since $(v_{KRS}, \omega_{KRS})$ is a K\"ahler-Ricci soliton, we find
\begin{eqnarray}
	\nonumber
		\frac{\sqrt{-1}}{2\pi}\partial\bar{\partial}h_{g_{KRS}}
	&=&
		\mbox{Ric}(\omega_{KRS})-\omega_{KRS}
	\\
	\nonumber
	&=&
		\mathcal{L}_{v_{KRS}}\omega_{KRS}=\frac{\sqrt{-1}}{2\pi}\partial\bar{\partial}\theta_{KRS}.
\end{eqnarray}
Remark that $\mathcal{L}_{v_{KRS}}\omega_{KRS}$ is a real $(1,1)$-form, because the imaginary part of $v_{KRS}$ is a Killing vector field.
So, we find that $h_{g_{KRS}}$ is equal to $\theta_{KRS}$ and that
\begin{equation}\label{eq:sign_KRS}
	F(v_{KRS})=\int_X v_{KRS}\theta_{KRS}\omega_{KRS}^n
	=\int_X |\bar{\partial}\theta_{KRS}|^2\omega_{KRS}^n >0.
\end{equation}
Then, in order to determine $\beta_{KRS}$ under the assumption that $X$ is contained in $\mathcal{W}_1$, it is sufficient to calculate the sign of Futaki invariant of the holomorphic vector field coming from a vector in $N_\mathbb{R}$, which is invariant under $\mathcal{W}(X)$.
To calculate the sign of Futaki invariant of holomorphic vector fields in the center of $\mathfrak{h}_r(X)$, we shall use the following result;
\begin{theorem}[Mabuchi, \cite{mabuchi87}]\label{thm:mabuchi}
Let $\mathbb{F}:=(F(t_1\frac{\partial}{\partial t_1}), \ldots, F(t_n\frac{\partial}{\partial t_n}))\in \mathbb{R}^n$.
Remark that $t_i\frac{\partial}{\partial t_i}$ is a $T_\mathbb{C}$-invariant holomorphic vector field on $T_\mathbb{C}$, which can be extended on $X$.
 Let $b(P^*) \in M_\mathbb{R}$ be the barycenter of $P^*$, i.e.,
\[
	\frac{1}{\int_{P^*}dy}(\int_{P^*}y_1 dy, \ldots, \int_{P^*} y_ndy),
\]
where $dy=dy_1\wedge \cdots \wedge dy_n$.
Then $\mathbb{F}$ equals to $-b(P^*)$.
\end{theorem}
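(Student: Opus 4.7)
My plan is to exploit Futaki's theorem that $F(v)$ is independent of the choice of representative in the K\"ahler class $c_1(X)$, so that I can evaluate $F(v_i)$ with $v_i=t_i\partial/\partial t_i$ using the specific $T_\mathbb{R}$-invariant K\"ahler form $\omega_0$ introduced in Section \ref{sec:convergence}. For that metric the Ricci potential satisfies $\det(u_0)_{ij}=e^{-u_0-h_0}$, so up to an additive constant
\[
h_0 = -u_0 - \log\det(u_0)_{jk}
\]
on the dense orbit $N_\mathbb{R}\times T_\mathbb{R}$. Because $h_0$ is $T_\mathbb{R}$-invariant, the holomorphic vector field $v_i=t_i\partial/\partial t_i$ acts on it as the real derivative $\partial/\partial x_i$ in the affine logarithmic coordinates. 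Hence $v_i(h_0)=\partial_{x_i}h_0 = -\partial_{x_i}u_0 - \partial_{x_i}\log\det(u_0)_{jk}$.

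Next I would reduce $F(v_i)=\int_X v_i(h_0)\,\omega_0^n$ to an integral on $N_\mathbb{R}$ via the toric expression $\omega_0^n = \kappa_n\det(u_0)_{jk}\,dx_1\cdots dx_n\wedge d\theta_1\cdots d\theta_n$, integrating out the $T_\mathbb{R}$-fibre to absorb a factor $(2\pi)^n$ into the universal constant. The integrand $\partial_{x_i}h_0\cdot\det(u_0)_{jk}$ splits as
\[
-\,\partial_{x_i}u_0\cdot\det(u_0)_{jk}\;-\;\partial_{x_i}\det(u_0)_{jk}.
\]
The first summand is precisely the object to which I would apply the change of variables $y=\tilde\mu(x)=\nabla u_0(x)$: the Jacobian is $\det(u_0)_{jk}\,dx = dy$, and $\partial_{x_i}u_0(x)=y_i$, so its integral over $N_\mathbb{R}$ equals $-\int_{P^*}y_i\,dy$. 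Combined with the Duistermaat--Heckman identity $\int_X\omega_0^n = \kappa_n\mathrm{Vol}(P^*)$, this produces exactly $-(b(P^*))_i\cdot\int_X\omega_0^n$, matching the statement after accounting for the normalization built into the definition of $F$.

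The main obstacle is to show the ``total derivative'' contribution $\int_{N_\mathbb{R}}\partial_{x_i}\det(u_0)_{jk}\,dx$ vanishes. Since $N_\mathbb{R}$ is noncompact, one cannot naively cite Stokes. The cleanest way to handle it is to reinterpret this integral globally on the compact manifold $X$: observe that $\det(u_0)_{jk}\,dx\wedge d\theta$ is (up to a constant) the pullback of the smooth top form $\omega_0^n$ on $X$, and that the real vector field $\mathrm{Re}(v_i)=\partial/\partial x_i$ extends smoothly over the toric boundary divisors as the real part of the globally defined holomorphic vector field $v_i$ on $X$. Then
\[
\partial_{x_i}\det(u_0)_{jk}\,dx\wedge d\theta \;=\; \mathcal{L}_{\mathrm{Re}(v_i)}\bigl(\det(u_0)_{jk}\,dx\wedge d\theta\bigr),
\]
and by Cartan's magic formula $\mathcal{L}_{\mathrm{Re}(v_i)}\omega_0^n = d\iota_{\mathrm{Re}(v_i)}\omega_0^n$, so Stokes on compact $X$ kills it. Once this is settled, the final step is purely bookkeeping of $\kappa_n$ and the volume $V$ to recover $\mathbb{F}=-b(P^*)$ as stated, using the standard normalization convention for the Futaki invariant.
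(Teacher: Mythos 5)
The paper does not prove this statement; it simply cites Mabuchi's 1987 result, so there is no internal proof to compare against. Your argument is nonetheless a correct and essentially standard derivation of Mabuchi's formula via the Legendre/moment-map change of variables $y=\nabla u_0(x)$, and the key steps are all sound: $F$ is metric-independent, so one can evaluate on the canonical $\omega_0$; $h_0=-u_0-\log\det(u_0)_{jk}$ holds with the paper's normalization; $v_i$ acts on $T_\mathbb{R}$-invariant functions as $\partial_{x_i}$ in the affine-logarithmic coordinates; the splitting of the integrand and the substitution producing $-\int_{P^*}y_i\,dy$ are correct; and the total-derivative piece does vanish. For that last step, your Lie-derivative/Cartan/Stokes argument on compact $X$ is a valid way to kill the boundary term at infinity in $N_\mathbb{R}$ (the extended real vector field is smooth on $X$ precisely because $t_i\partial/\partial t_i$ extends holomorphically, and $\omega_0^n$ is a smooth top form on $X$). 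A more pedestrian alternative is a direct decay estimate: since $\det(u_0)_{jk}=e^{-u_0-h_0}$ with $h_0$ bounded and $u_0$ growing linearly, $\det(u_0)_{jk}$ decays exponentially on $N_\mathbb{R}$, so the boundary integral over large spheres vanishes. Both are fine. One small caveat you implicitly acknowledge: with the paper's stated definition $F(v)=\int_X v h_g\,\omega_g^n$ the computation actually yields $F(v_i)=-\bigl(\int_X\omega_0^n\bigr)\,b(P^*)_i$, so the identity $\mathbb{F}=-b(P^*)$ as printed implicitly assumes the volume-normalized Futaki character; since the paper only uses the sign of $\langle b(P^*),\cdot\rangle$, this discrepancy is harmless, but it would be worth making explicit in a self-contained write-up.
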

The minus sign of $b(P^*)$ above comes from that  our choice of affine logarithmic coordinates has the opposite sign to the one in \cite{mabuchi87}.
Combining (\ref{eq:sign_KRS}) and Theorem \ref{thm:mabuchi} we find
\begin{equation}\label{eq:criterion_KRS}
	\langle b(P^*), \beta_{KRS} \rangle <0.
\end{equation}

\subsection{Toric Fano $2$-folds}
There are five types of toric Fano $2$-folds;
$\mathbb{CP}^2$, $\mathbb{CP}^1 \times \mathbb{CP}^1$ and the blow up of $\mathbb{CP}^2$ at $k$ points, where $k=1,2,3$.
K\"ahler-Einstein manifolds among them are $\mathbb{CP}^2$, $\mathbb{CP}^1 \times \mathbb{CP}^1$
and the blow up of $\mathbb{CP}^2$ at $3$ points.
Meanwhile the blow up of $\mathbb{CP}^2$ at $k$ points ($k=1,2$) does not admit K\"ahler-Einstein metrics and it is contained in $\mathcal{W}_1$. So we can apply our results to them.
Firstly let us consider the blow up of $\mathbb{CP}^2$ at one point.
\begin{example}\label{ex:blw_CP2_1pt}
The support of the KRF-MIS on $\mathbb{CP}^2 \sharp \overline{\mathbb{CP}^2}$ of exponent $\gamma$ is the exceptional divisor for all $\gamma \in (\frac{1}{2}, 1)$.
\end{example}
\begin{proof}
The polytope in $N_\mathbb{R}$ whose vertices are
\[
	(-1, -1), (-1, 0), (0, -1), (1,1),
\]
corresponds to the Fano polytope $P$ of $\mathbb{CP}^2 \sharp \overline{\mathbb{CP}^2}$.
Then, $N^{\mathcal{W}(X)}_\mathbb{R}$ is the one-dimensional subspace of $N_\mathbb{R}$ generated by a vector $(-1,-1)$.
From the symmetry of $P$, we find that $\beta_{KRS}$ is proportional to $(-1,-1)$.
Since the vertices of the polytope $P^*$ are
\[
	(-1, 0), (0,-1), (2, -1), (-1, 2),
\]
it is easy to see that $\langle b(P^*), (1,1) \rangle >0$. 
Then, (\ref{eq:criterion_KRS}) implies that $\beta_{KRS}=\beta(-1,-1)$ where $\beta >0$.
Also we find that
\[
	\widetilde{st(x(-\beta_{KRS})})=
	\{x=(x_1, x_2) \mid x_1-2x_2 \ge 0, 2x_1-x_2 \le 0  \}.
\]
The vertex of $P$ contained in $int(\widetilde{st(x(-\beta_{KRS})}))$ is $(-1, -1)$ which represents the exceptional divisor.
Then, Corollary \ref{cor:star_cor} implies that the support of the KRF-MIS of exponent $\gamma$ is the exceptional divisor where $\gamma$ is strictly smaller than $1$ and sufficiently close to $1$.
The subset $\{p^{\max(k)}\}$ of vertices of $P^*$ is 
\[
	\{(2,-1), (-1, 2)\}.
\]
For the facet $\delta^*$ of $P^*$ associated with the vertex $(-1, -1)$ of $P$, 
\[
	\langle p^{\max(k_0)}, x^{(0)} \rangle
	=\langle (2,-1), (-1, -1) \rangle
	=\langle (-1, 2), (-1, -1) \rangle
	=-1.
\]
Hence, 
\[
	c_{\{p^{(\delta^{*})}\}}(\psi_\infty)=c_{\{(-\frac{1}{2}, -\frac{1}{2})\}}(\psi_\infty)
	=\frac{1}{2}.
\]
Therefore the proof is completed.
\end{proof}
Next let us consider the blow up of $\mathbb{CP}^2$ at $p_1$ and $p_2$.
Let $E_1$ and $E_2$ be the exceptional divisors of the blow up.
In $X$, there is another $(-1)$-curve denoted by $E_0$, which intersects with $E_1$ and $E_2$
Remark that $E_0$ is the proper transform of $\overline{p_1p_2}$ of the line passing through $p_1$ and $p_2$.
Then,
\begin{example}\label{ex:blw_CP2_2pts}
The support of the KRF-MIS on $\mathbb{CP}^2 \sharp 2\overline{\mathbb{CP}^2}$ of exponent $\gamma$ is 
\[
	\left\{\begin{array}{cc}
		\cup_{i=0}^{2} E_i & \mbox{for } \gamma \in (\frac{1}{2}, 1),  \\
		E_0 & \mbox{for } \gamma \in (\frac{1}{3}, \frac{1}{2}).
	\end{array}\right.
\]
\end{example}
\begin{proof}
The polytope in $N_\mathbb{R}$ whose vertices are
\[
	(-1, 0) , (0, -1), (1,0), (1,1), (0,1),
\]
corresponds to the Fano polytope $P$ of $\mathbb{CP}^2 \sharp 2\overline{\mathbb{CP}^2}$.
Then, $N^{\mathcal{W}(X)}_\mathbb{R}$ is the one-dimensional subspace of $N_\mathbb{R}$ generated by a vector $(1,1)$.
From the symmetry of $P$, we find that $\beta_{KRS}$ is proportional to $(1,1)$.
Since the vertices of the polytope $P^*$ are
\[
	(-1,-1), (-1, 1), (0,1), (1, 0), (1, -1),
\]
we find that $\langle b(P^*), (1,1) \rangle <0$. 
Then, (\ref{eq:criterion_KRS}) implies that $\beta_{KRS}=\beta(1,1)$ where $\beta >0$.
Also we find that
\[
	\widetilde{st(x(-\beta_{KRS})})=
	\{x=(x_1, x_2) \mid x_1+x_2  \ge 0  \}.
\]
The vertices of $P$ contained in $int(\widetilde{st(x(-\beta_{KRS})}))$ are $(1, 0), (0,1)$, which represent the exceptional divisors $E_1$ and $E_2$,  and $(1,1)$ which represents the proper transform $E_0$.
Then, Corollary \ref{cor:star_cor} implies that the support of the KRF-MIS of exponent $\gamma$ is the sum of $E_0$, $E_1$ and $E_2$  where $\gamma$ is strictly smaller than $1$ and sufficiently close to $1$.
The subset $\{p^{\max(k)}\}$ of vertices of $P^*$ is 
\[
	\{(-1,-1)\}.
\]
For the facets $\eta_i^*, (i=1,2)$ of $P^*$ associated with the vertices $(1, 0)$ and $(0,1)$ of $P$  respectively, 
\[
	\langle p^{\max(k_0)}, x^{(0)} \rangle
	=\langle (-1,-1), (1, 0) \rangle
	=\langle (-1, -1), (0, 1) \rangle
	=-1.
\]
Hence, 
\[
	c_{\{p^{(\eta_1^{*})}\}}(\psi_\infty)=c_{\{(1, -\frac{1}{2})\}}(\psi_\infty)
	=\frac{1}{2}.
\]
Also $c_{\{p^{(\eta_2^{*})}\}}(\psi_\infty)=\frac{1}{2}$.
For the facet $\delta^*$ associated with the vertex $(1,1)$ of $P$,
\[
	\langle p^{\max(k_0)}, x^{(0)} \rangle
	=\langle (-1,-1), (1, 1) \rangle
	=-2.
\]
Hence, 
\[
	c_{\{p^{(\delta^{*})}\}}(\psi_\infty)=c_{\{(\frac{1}{2}, \frac{1}{2})\}}(\psi_\infty)
	=\frac{1}{3}.
\]
Therefore the proof is completed.
\end{proof}

\subsection{Toric Fano $3$-folds}
Toric Fano $3$-folds are classified completely (Remark 2.5.10 in \cite{batyrev99}).
According to the classification, there are eighteen types of toric Fano $3$-folds.
Five of them are K\"ahler-Einstein manifolds, and
eight of them are contained in $\mathcal{W}_1$ and do not admit K\"ahler-Einstein metrics. 
(As for the classification of K\"ahler-Einstein toric $3$-folds, see \cite{mabuchi87}.)
\begin{example}\label{ex:B1}
	Let $\mathcal{B}_1$ be $\mathbb{P}_{\mathbb{CP}^2}(\mathcal{O}\oplus \mathcal{O}(2))$.
	The support of the KRF-MIS on $\mathcal{B}_1$
	of exponent $\gamma$ is
	$S_\infty$
	for $\gamma \in (\frac{1}{2}, 1)$.
	Here $S_\infty$ is the divisor defined by a section 
	$(0, \sigma)$ of
	$\mathcal{O}\oplus \mathcal{O}(2)$ over $\mathbb{CP}^2$.
	More precisely, $S_\infty$ is the closure of
	\[
		\{[0; \sigma(p)] \in \mathcal{B}_1 \mid \sigma(p)\neq 0\}.
	\]
	Remark that it is not an exceptional divisor.
\end{example}
\begin{proof}
The vertices of the Fano polytope of $\mathcal{B}_1$
is
\[
	({}^tq^{(1)}, {}^tq^{(2)}, {}^tq^{(3)}, {}^tq^{(4)}, {}^tq^{(5)})
	=
	\left(\begin{array}{ccccc}
		1 & -1 & 1 & 1 &  0 \\
		0 &  0 & 1 & 0 & -1 \\
		0 &  0 & 0 & 1 & -1
	\end{array}\right),
\]
where $^tq^{(i)}$ denotes the transposition of the vector $q^{(i)}$.
This toric Fano manifold has a symmetry which permutes $\{q^{(3)}, q^{(4)}, q^{(5)}\}$, then it is contained in $\mathcal{W}_1$ and $N^{\mathcal{W}(\mathcal{B}_1)}_\mathbb{R}$ is generated by a vector $(1, 0, 0)$.
The vertices of the polytope $P^*$ is
\[
	\left(\begin{array}{cccccc}
		 1 & 1 &  1 & -1 & -1 & -1\\
		 0 & -1&  0 &  2 & -3 &  2\\
		 0 & 0 & -1 &  2 &  2 & -3
	\end{array}\right).
\]
From (\ref{eq:criterion_KRS}), we find that $\beta_{KRS}=\beta(1,0,0)$, where $\beta>0$.
The vertex of $P$ contained in $int(\widetilde{st(x(-\beta_{KRS})}))$ is
$\{q^{(1)}\}$, 
which represents $S_\infty$.
Then, Corollary \ref{cor:star_cor} implies that the support of the KRF-MIS of 
exponent $\gamma$ is 
$S_\infty$
where $\gamma$ is strictly smaller than $1$ and sufficiently close to $1$.
Its complex singularity exponent is
$\frac{1}{2}$.
Therefore the proof is completed.
\end{proof}
\begin{example}\label{ex:B2}
	Let $\mathcal{B}_2$ be
	 $\mathbb{P}_{\mathbb{CP}^2}(\mathcal{O}\oplus \mathcal{O}(1))$, 
	 which is the blow up of $\mathbb{CP}^3$ at one point.
	The support of the KRF-MIS on $\mathcal{B}_2$ of 
	exponent $\gamma$ is
	the divisor $S_\infty$ defined by a section $(0,\sigma)$ of 
	$\mathcal{O}\oplus \mathcal{O}(1)$ over $\mathbb{CP}^2$
	for $\gamma \in (\frac{1}{2}, 1)$.
	In this case $S_\infty$ is the exceptional divisor.
\end{example}
\begin{proof}
The vertices of the Fano polytope of $\mathcal{B}_2$
is
\[
	({}^tq^{(1)}, {}^tq^{(2)}, {}^tq^{(3)}, {}^tq^{(4)}, {}^tq^{(5)})
	=
	\left(\begin{array}{ccccc}
		1 & -1 & 0 & 0 &  1 \\
		0 &  0 & 1 & 0 & -1 \\
		0 &  0 & 0 & 1 & -1
	\end{array}\right).
\]
This toric Fano manifold has a symmetry which permutes $\{q^{(3)}, q^{(4)}, q^{(5)}\}$, then it is contained in $\mathcal{W}_1$ and $N^{\mathcal{W}(\mathcal{B}_2)}_\mathbb{R}$ is generated by a vector $(1, 0, 0)$.
The vertices of the polytope $P^*$ is
\[
	\left(\begin{array}{cccccc}
		 1 &  1 &  1 & -1 & -1 & -1\\
		 1 &  1& -1 &  1 & -3 &  1\\
		 1 & -1&  1 &  1 &  1 & -3
	\end{array}\right).
\]
From (\ref{eq:criterion_KRS}), we find that $\beta_{KRS}=\beta(1,0,0)$, where $\beta>0$.
The vertex of $P$ contained in $int(\widetilde{st(x(-\beta_{KRS})}))$ is
$\{q^{(1)}\}$, 
which represents the exceptional divisor.
Then, Corollary \ref{cor:star_cor} implies that the support of the KRF-MIS of
exponent $\gamma$ is 
the exceptional divisor
where $\gamma$ is strictly smaller than $1$ and sufficiently close to $1$.
Its complex singularity exponent is
$\frac{1}{2}$.
Therefore the proof is completed.
\end{proof}
\begin{example}\label{ex:B3}
	Let $\mathcal{B}_3$ be 
	$\mathbb{P}_{\mathbb{CP}^1}(\mathcal{O}\oplus \mathcal{O}\oplus \mathcal{O}(1))$
	which is the blow up of $\mathbb{CP}^3$ along $\mathbb{CP}^1$.
	The support of the KRF-MIS on $\mathcal{B}_3$ of exponent $\gamma$ is
	the exceptional divisor of the blow up
	for $\gamma \in (\frac{1}{3}, 1)$.
\end{example}
\begin{proof}
The vertices of the Fano polytope of $\mathcal{B}_3$
is
\[
	({}^tq^{(1)}, {}^tq^{(2)}, {}^tq^{(3)}, {}^tq^{(4)}, {}^tq^{(5)})
	=
	\left(\begin{array}{ccccc}
		  1 &  0 &  1 & -1 &  0 \\
		  1 &  0 &  1 &  0 & -1 \\
		  1 & -1 &  0 &  0 &  0
	\end{array}\right).
\]
This toric Fano manifold has a symmetry which permutes $\{q^{(1)}, q^{(2)}\}$ and permutes $\{q^{(4)}, q^{(5)}\}$, then it is contained in $\mathcal{W}_1$ and $N^{\mathcal{W}(\mathcal{B}_3)}_\mathbb{R}$ is generated by a vector $(1, 1, 0)$.
The vertices of the polytope $P^*$ is
\[
	\left(\begin{array}{cccccc}
		-1 &  2 & -1 &  2 & -1 & -1\\
		-1 & -1 &  2 & -1 &  2 & -1\\
		-1 & -1 & -1 &  0 &  0 &  3
	\end{array}\right).
\]
From (\ref{eq:criterion_KRS}), we find that $\beta_{KRS}=\beta(1,1,0)$, where $\beta>0$.
The vertex of $P$ contained in $int(\widetilde{st(x(-\beta_{KRS})}))$ is
$\{q^{(3)}\}$, 
which represents the exceptional divisor.
Then, Corollary \ref{cor:star_cor} implies that the support of the KRF-MIS of exponent $\gamma$ is 
the exceptional divisor
where $\gamma$ is strictly smaller than $1$ and sufficiently close to $1$.
Its complex singularity exponent is
$\frac{1}{3}$.
Therefore the proof is completed.
\end{proof}
\begin{example}\label{ex:C1}
	Let $\mathcal{C}_1$ be 
	$\mathbb{P}_{\mathbb{CP}^1\times \mathbb{CP}^1}(\mathcal{O}\oplus \mathcal{O}(1,1))$.
	The support of the KRF-MIS on $\mathcal{C}_1$ of exponent $\gamma$ is
	$S_\infty$
	for $\gamma \in (\frac{1}{2}, 1)$.
	Here $S_\infty$ is the divisor defined by a section 
	$(0, \sigma_1\otimes \sigma_2)$ of
	$\mathcal{O}\oplus \mathcal{O}(1,1)$ over $\mathbb{CP}^1\times \mathbb{CP}^1$ and
	$\sigma_i$ is the pull-back of the section of $\mathcal{O}_{\mathbb{CP}^1}(1)$
	with respect to the $i$-th projection $\mathbb{CP}^1\times \mathbb{CP}^1 \to \mathbb{CP}^1$. Remark that $S_\infty$ is not an exceptional divisor.
\end{example}
\begin{proof}
The vertices of the Fano polytope of $X$
is
\[
	({}^tq^{(1)}, {}^tq^{(2)}, {}^tq^{(3)}, {}^tq^{(4)}, {}^tq^{(5)}, {}^tq^{(6)})
	=
	\left(\begin{array}{cccccc}
		  0 &  0 &  1 &  0 & -1 &  0\\
		  0 &  0 &  0 &  1 &  0 & -1\\
		  1 & -1 &  1 &  1 &  0 &  0
	\end{array}\right).
\]
This toric Fano manifold has a symmetry which permutes $\{q^{(3)}, q^{(4)}, q^{(5)}, q^{(6)}\}$, then it is contained in $\mathcal{W}_1$ and $N^{\mathcal{W}(\mathcal{C}_1)}_\mathbb{R}$ is generated by a vector $(0, 0, 1)$.
The vertices of the polytope $P^*$ is
\[
	\left(\begin{array}{cccccccc}
		 0 &  0 & -1 & -1 &  2 &  2 & -1 & -1\\
		 0 & -1 &  0 & -1 & -1 &  2 &  2 & -1\\
		 1 &  1 &  1 &  1 & -1 & -1 &  -1 & -1
	\end{array}\right).
\]
From (\ref{eq:criterion_KRS}), we find that $\beta_{KRS}=\beta(0,0,1)$, where $\beta>0$.
The vertex of $P$ contained in $int(\widetilde{st(x(-\beta_{KRS})}))$ is
$\{q^{(6)}\}$, 
which represents 
$S_\infty$.
Then, Corollary \ref{cor:star_cor} implies that the support of the KRF-MIS of exponent $\gamma$ is 
$S_\infty$
where $\gamma$ is strictly smaller than $1$ and sufficiently close to $1$.
Its complex singularity exponent is
$\frac{1}{2}$.
Therefore the proof is completed.
\end{proof}
\begin{example}\label{ex:C4}
	Let $\mathcal{C}_4$ be 
	$(\mathbb{CP}^2 \sharp \overline{\mathbb{CP}^2})\times \mathbb{CP}^1$,
	which is the blow up of $\mathbb{CP}^2\times \mathbb{CP}^1$ along
	$\{\mbox{point}\}\times \mathbb{CP}^1$.
	The support of the KRF-MIS on $\mathcal{C}_4$ of exponent $\gamma$ is
	the exceptional divisor of the blow up
	for $\gamma \in (\frac{1}{2}, 1)$.
\end{example}
\begin{proof}
The vertices of the Fano polytope of $\mathcal{C}_4$
is
\[
	({}^tq^{(1)}, {}^tq^{(2)}, {}^tq^{(3)}, {}^tq^{(4)}, {}^tq^{(5)}, {}^tq^{(6)})
	=
	\left(\begin{array}{cccccc}
		  0 &  0 & -1 & -1 &  1 &  0\\
		  0 &  0 & -1 &  0 &  1 & -1\\
		  1 & -1 &  0 &  0 &  0 &  0
	\end{array}\right).
\]
This toric Fano manifold has a symmetry which permutes $\{q^{(1)}, q^{(2)}\}$ and permutes $\{ q^{(4)}, q^{(6)}\}$, then it is contained in $\mathcal{W}_1$ and $N^{\mathcal{W}(\mathcal{C}_4)}_\mathbb{R}$ is generated by a vector $(-1, -1, 0)$.
The vertices of the polytope $P^*$ is
\[
	\left(\begin{array}{cccccccc}
		 0 & -1 &  2 & -1 &  0 & -1 &  2 & -1\\
		-1 &  2 & -1 &  2 & -1 &  2 & -1 &  2\\
		 1 &  1 &  1 &  1 & -1 & -1&  -1 & -1
	\end{array}\right).
\]
From (\ref{eq:criterion_KRS}), we find that $\beta_{KRS}=\beta(-1,-1,0)$, where $\beta>0$.
The vertex of $P$ contained in $int(\widetilde{st(x(-\beta_{KRS})}))$ is
$\{q^{(3)}\}$, 
which represents 
the exceptional divisor.
Then, Corollary \ref{cor:star_cor} implies that the support of the KRF-MIS of exponent $\gamma$ is 
$S_\infty$
where $\gamma$ is strictly smaller than $1$ and sufficiently close to $1$.
Its complex singularity exponent is
$\frac{1}{2}$.
Therefore the proof is completed.
\end{proof}
Next we consider a $(\mathbb{CP}^2 \sharp 2\overline{\mathbb{CP}^2})$-bundle $\mathcal{E}_1$ over $\mathbb{CP}^1$.
This manifold is derived as follows.
Let $\tilde{E}_0$ be its exceptional divisor of the blow up $\pi: \mathcal{B}_3 \to \mathbb{CP}^3$ along a curve
\[
	F_0:=\{[z_0; z_1: 0: 0] \in \mathbb{CP}^3 \mid z_i \in \mathbb{C}\}
	\simeq
	\mathbb{CP}^1.
\] 
Let $\tilde{F}_1$ and $\tilde{F}_2$ are the two ($T_{\mathbb{C}}$-fixed) curves which are reduced to $F_0$ under $\pi$.
Then $\mathcal{E}_1$ is constructed from the blow up of $\mathcal{B}_3$ along $\tilde{F}_1$ and $\tilde{F}_2$.
Let $\tilde{\tilde{E}}_0$ be the proper transform of $\tilde{E}_0$ and $\cup_{i=1,2}\tilde{\tilde{E}}_i$  be the exceptional divisors with respect to the blow up of $\mathcal{B}_3$. 
Remark that $\tilde{\tilde{E}}_0$ is not exceptional in $\mathcal{E}_1$.
\begin{example}\label{ex:E1}
	Let $\mathcal{E}_1$ be a
	$(\mathbb{CP}^2 \sharp 2\overline{\mathbb{CP}^2})$-bundle 
	 over $\mathbb{CP}^1$ defined as above.
	The support of the KRF-MIS on $\mathcal{E}_1$ of exponent $\gamma$ is
	\[
		\left\{\begin{array}{cc}
		\tilde{\tilde{E}}_0 \cup (\cup_{i=1,2}\tilde{\tilde{E}}_i) & \mbox{for } \gamma \in (\frac{1}{2}, 1)  \\
		\tilde{\tilde{E}}_0 & \mbox{for } \gamma \in (\frac{1}{3}, \frac{1}{2}).
		\end{array}\right.
	\]
\end{example}
\begin{proof}
The vertices of the Fano polytope of $\mathcal{E}_1$
is
\[
	({}^tq^{(1)}, {}^tq^{(2)}, {}^tq^{(3)}, {}^tq^{(4)}, {}^tq^{(5)}, {}^tq^{(6)}, {}^tq^{(7)})
	=
	\left(\begin{array}{ccccccc}
		  1 &  0 & -1 &  0 &  1 &  1 &  0\\
		  1 &  1 &  0 & -1 &  0 &  1 &  0\\
		  0 &  0 &  0 &  0 &  0 &  1 & -1 
	\end{array}\right)
\]
This toric Fano manifold has a symmetry which permutes $\{q^{(2)}, q^{(5)}\}$ and permutes $\{ q^{(6)}, q^{(7)}\}$, then it is contained in $\mathcal{W}_1$ and $N^{\mathcal{W}(\mathcal{E}_1)}_\mathbb{R}$ is generated by a vector $(1, 1, 0)$.
The vertices of the polytope $P^*$ is
\[
	\left(\begin{array}{cccccccccc}
		 0 & -1 & -1 &  1 &  1 &  0 & -1 & -1 &  1 &  1\\
		 1 &  1 & -1 & -1 &  0 &  1 &  1 & -1 & -1 &  0 \\
		 0 &  1 &  3 &  1 &   0 & -1& -1 & -1 & -1 & -1
	\end{array}\right).
\]
From (\ref{eq:criterion_KRS}), we find that $\beta_{KRS}=\beta(1,1,0)$, where $\beta>0$.
The vertices of $P$ contained in $int(\widetilde{st(x(-\beta_{KRS})}))$ are
$\{q^{(1)}, q^{(2)}, q^{(5)}\}$. 
Remark that $\{q^{(1)}\}$ represents $\tilde{\tilde{E}}_0$ and $\{q^{(2)}, q^{(5)}\}$ represents $\{\tilde{\tilde{E}}_1, \tilde{\tilde{E}}_2\}$.
Then, Corollary \ref{cor:star_cor} implies that the support of the KRF-MIS of exponent $\gamma$ is 
the sum of $\tilde{\tilde{E}}_0$, $\tilde{\tilde{E}}_1$ and $\tilde{\tilde{E}}_2$
where $\gamma$ is strictly smaller than $1$ and sufficiently close to $1$.
Their complex singularity exponents are
\[
	\left\{\begin{array}{cc}
		\frac{1}{2} & \mbox{for }  \tilde{\tilde{E}}_1, \,\,\, \tilde{\tilde{E}}_2\\
		\frac{1}{3} & \mbox{for } \tilde{\tilde{E}}_0.
	\end{array}\right.
\]
Therefore the proof is completed.
\end{proof}
\begin{example}\label{ex:E3}
	Let $\mathcal{E}_3$ be 
	$(\mathbb{CP}^2 \sharp 2\overline{\mathbb{CP}^2})\times \mathbb{CP}^1$,
	which is the blow up of $\mathbb{CP}^1 \times \mathbb{CP}^1 \times
	\mathbb{CP}^1$ along $\{p_1\}\times \{p_2\}\times \mathbb{CP}^1$. 
	The support of the KRF-MIS on $\mathcal{E}_3$ of exponent $\gamma$ is
	\[
		\left\{\begin{array}{cc}
		\cup_{i=0}^{2} E_i & \mbox{for } \gamma \in (\frac{1}{2}, 1)  \\
		E_0 & \mbox{for } \gamma \in (\frac{1}{3}, \frac{1}{2}).
		\end{array}\right.
	\]
	Here $E_0$ denotes the exceptional divisor of the blow up and
	$E_1$ (resp. $E_2$) denotes the proper transform of 
	$\mathbb{CP}^1\times \{p_2\} \times \mathbb{CP}^1$
	(resp. $ \{p_1\} \times \mathbb{CP}^1 \times \mathbb{CP}^1$) 
\end{example}
\begin{proof}
The vertices of the Fano polytope of $\mathcal{E}_3$
is
\[
	({}^tq^{(1)}, {}^tq^{(2)}, {}^tq^{(3)}, {}^tq^{(4)}, {}^tq^{(5)}, {}^tq^{(6)}, {}^tq^{(7)})
	=
	\left(\begin{array}{ccccccc}
		  1 &  0 & -1 &  0 &  1 &  0 &  0\\
		  1 &  1 &  0 & -1 &  0 &  0 &  0\\
		  0 &  0 &  0 &  0 &  0 &  1 & -1 
	\end{array}\right).
\]
This toric Fano manifold has a symmetry which permutes $\{q^{(2)}, q^{(5)}\}$ and permutes $\{ q^{(6)}, q^{(7)}\}$, then it is contained in $\mathcal{W}_1$ and $N^{\mathcal{W}(\mathcal{E}_3)}_\mathbb{R}$ is generated by a vector $(1, 1, 0)$.
The vertices of the polytope $P^*$ is
\[
	\left(\begin{array}{cccccccccc}
		 0 & -1 & -1 &  1 &  1 &  0 & -1 & -1 &  1 &  1\\
		 1 &  1 & -1 & -1 &  0 &  1 &  1 & -1 & -1 &  0 \\
		 1 &  1 &  1 &  1 &   1 & -1& -1 & -1 & -1 & -1
	\end{array}\right).
\]
From (\ref{eq:criterion_KRS}), we find that $\beta_{KRS}=\beta(1,1,0)$, where $\beta>0$.
The vertices of $P$ contained in $int(\widetilde{st(x(-\beta_{KRS})}))$ are
$\{q^{(1)}, q^{(2)}, q^{(5)}\}$. 
Remark that $\{q^{(1)}\}$ represents $E_0$ and $\{q^{(2)}, q^{(5)}\}$ represents $\{E_1, E_2\}$.
Then, Corollary \ref{cor:star_cor} implies that the support of the KRF-MIS of exponent $\gamma$ is 
the sum of $E_0$, $E_1$ and $E_2$
where $\gamma$ is strictly smaller than $1$ and sufficiently close to $1$.
Their complex singularity exponents are
\[
	\left\{\begin{array}{cc}
		\frac{1}{2} & \mbox{for }  E_1, \,\,\, E_2\\
		\frac{1}{3} & \mbox{for } E_0
	\end{array}\right.
\]
Therefore the proof is completed.
\end{proof}
Finally let us consider a $(\mathbb{CP}^2 \sharp 3\overline{\mathbb{CP}^2})$-bundle  $\mathcal{F}_2$ over $\mathbb{CP}^1$.
Let $\tilde{E}_0$, $\tilde{\tilde{E}}_i$ $(i=0,1,2)$, $F_0$, and $\tilde{F}_i$ $(i=1, 2)$ be as in Example \ref{ex:E1}.
Let $\tilde{\pi}: \mathcal{E}_1 \to \mathcal{B}_3$ be the blow up of $\mathcal{B}_3$ along $\tilde{F}_1$ and $\tilde{F}_2$.
Let $F_3$ be the $\mathbb{CP}^1$ in $\mathbb{CP}^3$ defined by
\[
	F_3:=\{[0: 0: z_3; z_4] \mid z_i \in \mathbb{C}\}.
\]
The manifold $\mathcal{F}_2$ is constructed from the blow up of $\mathcal{E}_1$ along the curve $\tilde{\pi}^{-1}(\pi^{-1}(F_3))$.
Let $\tilde{\tilde{\tilde{E}}}_0$ be the proper transform of $\tilde{\tilde{E}}_0$ with respect to the blow up of $\mathcal{E}_1$ along the curve $\tilde{\pi}^{-1}(\pi^{-1}(F_3))$.
Remark that $\tilde{\tilde{\tilde{E}}}_0$ is not an exceptional divisor.
\begin{example}\label{ex:F2}
	Let $\mathcal{F}_2$ be a
	$(\mathbb{CP}^2 \sharp 3\overline{\mathbb{CP}^2})$-bundle 
	 over $\mathbb{CP}^1$ defined as above.
	The support of the KRF-MIS on $\mathcal{F}_2$ of exponent $\gamma$ is
	$\tilde{\tilde{\tilde{E}}}_0$
	for $\gamma \in (\frac{1}{2}, 1)$.
\end{example}
\begin{proof}
The vertices of the Fano polytope of $\mathcal{F}_2$
is
\[
	({}^tq^{(1)}, {}^tq^{(2)}, {}^tq^{(3)}, {}^tq^{(4)}, 
	{}^tq^{(5)}, {}^tq^{(6)}, {}^tq^{(7)}, {}^tq^{(8)})
	=
	\left(\begin{array}{cccccccc}
		  1 &  0 & -1 & -1 &  0 &  1 &  1 &  0\\
		  0 &  1 &  1 &  0 & -1 & -1 &  0 &  0\\
		  0 &  0 &  0 &  0 &  0 &  0 &  1 & -1
	\end{array}\right).
\]
This toric Fano manifold has a symmetry which permutes $\{q^{(2)}, q^{(6)}\}$ and permutes $\{ q^{(7)}, q^{(8)}\}$, then it is contained in $\mathcal{W}_1$ and $N^{\mathcal{W}(\mathcal{F}_2)}_\mathbb{R}$ is generated by a vector $(1, 0, 0)$.
The vertices of the polytope $P^*$ is
\[
	\left(\begin{array}{cccccccccccc}
		 1 &  0 & -1 & -1 &  0 &  1 &  1 &  0 & -1 & -1 &  0 &  1\\
		 1 &  1 &  0 & -1 & -1 &  0 &  1 &  1 &  0 & -1 & -1 &  0\\
		 0 &  1 &  2 &  2 &  1 &  0 & -1 & -1 & -1 & -1 & -1 & -1
	\end{array}\right).
\]
From (\ref{eq:criterion_KRS}), we find that $\beta_{KRS}=\beta(1, 0,0)$, where $\beta>0$.
The vertex of $P$ contained in $int(\widetilde{st(x(-\beta_{KRS})}))$ is
$\{q^{(1)}\}$, 
which represents 
$\tilde{\tilde{\tilde{E}}}_0$.
Then, Corollary \ref{cor:star_cor} implies that the support of the KRF-MIS of exponent $\gamma$ is 
where $\gamma$ is strictly smaller than $1$ and sufficiently close to $1$.
Its complex singularity exponent is
$\frac{1}{2}$.
Therefore the proof is completed.
\end{proof}

By the similar calculation as Example \ref{ex:B3}, we find
\begin{example}\label{ex:blw_CPk}
	Let $X_k$ be the blow up of $\mathbb{CP}^n$ along $\mathbb{CP}^k$, where $0 \le k \le n-2$.
	The support of the KRF-MIS of complex singular exponent $\gamma$ is
	the exceptional divisor 
	for $\gamma \in (\frac{1}{k+2}, 1)$.
\end{example}

\end{document}